\newcommand{\Z}{\mathbb{Z}}
\newcommand{\Q}{\mathbb{Q}}
\newcommand{\gothm}{\mathfrak m}
\newcommand{\boldm}{\mathbf{m}}
\newcommand{\Sym}{\text{Sym}}
\newcommand{\Spec}{\text{Spec}}
\newcommand{\Proj}{\text{Proj}}
\newcommand{\Div}{\text{Div}}
\newcommand{\ang}[1]{\langle #1 \rangle}
\renewcommand {\bar}{\overline}
\newcommand{\diag}[1]{\mathbf{diag}(#1)}
\newcommand{\cyc}[1]{\mathbf{cyc}(#1)}
\newcommand{\tcyc}[1]{\mathbf{tcyc}(#1)}
\newtheorem{theorem}{Theorem}[section]
\newtheorem{lemma}[theorem]{Lemma}
\newtheorem{proposition}[theorem]{Proposition}
\newtheorem{corollary}[theorem]{Corollary}
\theoremstyle{definition}
\newtheorem{example}[theorem]{Example}
\newtheorem{remark}[theorem]{Remark}
\newtheorem{definition}[theorem]{Definition}
\def\frak{\relaxnext@\ifmmode\let\next\frak@\else
	\def\next{\Err@{Use \string\frak\space only in math mode}}\fi\next}
\def\goth{\relaxnext@\ifmmode\let\next\frak@\else
	\def\next{\Err@{Use \string\goth\space only in math mode}}\fi\next}
\def\frak@#1{{\frak@@{#1}}}
\def\frak@@#1{\noaccents@\fam\euffam#1}
\font\tengoth=eufm10
\def\goth{\fam\gothfam\tengoth} \textfont\gothfam=\tengoth
\title{$p$-adic estimates of abelian Artin $L$-functions on curves}
\author{Joe Kramer-Miller}
\date{}
\begin{document}

	\title{$p$-adic estimates of abelian Artin $L$-functions on curves}
	\author{Joe Kramer-Miller}

	\date{}

		\maketitle
		
			\begin{abstract}
			The purpose of this article is to prove a ``Newton over Hodge'' result for 
			finite characters on curves. 
			Let $X$ be a smooth proper curve over a finite 
			field $\mathbb{F}_q$ of characteristic $p\geq 3$ and let $V \subset X$ be an 
			affine curve. Consider a nontrivial finite character $\rho:\pi_1^{et}(V) \to \mathbb{C}^\times$. In this article, we prove a lower bound on the Newton 
			polygon of the $L$-function $L(\rho,s)$. The estimate depends on monodromy invariants
			of $\rho$: the Swan conductor and the local exponents. Under
			certain nondegeneracy assumptions this lower bound agrees with the irregular Hodge
			filtration introduced by Deligne. In particular, our result further
			demonstrates Deligne's prediction that the irregular Hodge filtration would force
			$p$-adic bounds on $L$-functions.
			As a corollary, we obtain estimates on the 
			Newton polygon of a curve with a cyclic action in terms of monodromy invariants. 
		\end{abstract}

		\tableofcontents
		\section{Introduction}
		Let $p$ be a prime with $p\geq 3$ and let $q=p^a$. 
		Let $\gls{X}$ be a smooth proper curve of genus $g$ defined over $\mathbb{F}_q$
		with function field $K(X)$. We define $G_X$ to be
		the absolute Galois group of $K(X)$. Let $\gls{rho}:G_X \to \mathbb{C}^\times$ be
		a non-trivial continuous character.
		The $L$-function associated to $\rho$ is defined by
		\begin{align}
		L(\rho,s) &=\prod\frac{1}{1 - \rho(Frob_x) s^{\deg(x)}}, \label{introduction of L-function}
		\end{align}
		with the product taken over all closed points $x \in X$ where $\rho$ is unramified.
		By the Weil conjectures for curves (see \cite{Weil-his_conjectures}) we know that 
		\begin{align*}
		L(\rho,s) &= \prod_{i=1}^{d}(1-\alpha_i 
		s) \in \overline{\Z}[s].
		\end{align*}  
		It is then natural to ask what we can say about the algebraic integers $\alpha_i$.
		The Riemann hypothesis for curves tell us that $|\alpha_i|_\infty = \sqrt{q}$ for each
		Archimedean place. Furthermore, we know that the $\alpha_i$ are $\ell$-adic units for any prime $\ell\neq p$.
		This leaves us with the question: what are the $p$-adic valuations of the $\alpha_i$? 
		
		The purpose of this article is to study the $p$-adic properties of $L(\rho,s)$. 
		We prove a 
		``Newton over Hodge'' result. This is in the vein of a celebrated theorem of Mazur (see \cite{Mazur-NoverH}), which compares the Newton and Hodge polygons of an algebraic variety over $\mathbb{F}_q$. Our result differs from Mazur's, in that
		we study cohomology with coefficients in a local system. Our Hodge bound is defined using
		two monodromy invariants: the \emph{Swan conductor} and the \emph{tame exponents}. 
		The representation $\rho$ is analogous to a rank one differential equation on
		a Riemann surface with regular singularities twisted by an exponential differential equations (i.e. a
		weight zero twisted Hodge module in the language of Esnault-Sabbah-Yu in \cite{Esnault-Yu-irrHodge}). In this context
		one may define an irregular Hodge polygon (see \cite{Deligne-Malgrange-Ramis} or \cite{Esnault-Yu-irrHodge}). The
		irregular Hodge polygon agrees with the Hodge polygon we define under certain
		nondegeneracy hypotheses. Our
		result thus gives further credence to the philosophy that characteristic zero
		Hodge-type phenomena forces $p$-adic bounds on lisse sheaves in characteristic $p$.

		\subsection{Statement of main results} \label{subsection: statement of main results}
		To state our main result, we first introduce some monodromy invariants. The character $\rho$ factors uniquely as $\rho=\rho^{wild} \otimes \chi$,
		where $|Im(\rho^{wild})|=p^n$ and $|Im(\chi)|=N$ with $\gcd(N,p)=1$. 
		\begin{enumerate}
			\item (local) Let $Q \in X$ be a closed point. After
			increasing $q$ we may assume that $Q$ is an $\mathbb{F}_q$-point. 
			Let $u_Q$ be a local parameter at $Q$. Then $\rho$ restricts to
			a local representation $\rho_Q:G_Q \to \mathbb{C}^\times$, where
			$G_Q$ is the absolute Galois group of $\mathbb{F}_q((u_Q))$. 
			We let $\rho^{wild}_Q$ (resp. $\chi_Q$) denote
			the restriction of $\rho^{wild}$ (resp. $\chi$) to $G_Q$. 
			\begin{enumerate}
				\item (Swan conductors) Let $I_Q \subset G_Q$ be the inertia subgroup
				at $Q$. There is a decreasing filtration of subgroups $I_Q^s$ on $I_Q$, indexed by
				real numbers $s\geq 0$. The \emph{Swan conductor} at $Q$
				is the infimum of all $s$ such that $I_Q^s \subset \ker(\rho_Q)$ (see \cite[Chapter 1]{Katz-Kloosterman}). We denote
				the Swan conductor by $\gls{swan}$. Note that $s_Q=0$ if and only if $\rho_Q^{wild}$
				is unramified. 
				\item (Tame exponents) 
				After increasing $q$ we may assume $\chi_Q$ is totally ramified at $Q$. 
				There exists $e_Q \in \frac{1}{q-1}\Z$ such that $G_Q$
				acts on $t_Q^{e_Q}$ by $\chi_Q$. Note that $e_Q$ is unique up
				to addition by an integer. 
				\begin{itemize}
					\item The \emph{exponent} of $\chi$ at $Q$ is the equivalence class $\gls{exponent}$
					of $e_Q$ in $ \frac{1}{q-1}\Z/ \Z$.
					\item We define $\gls{special exponent}$ to be the unique integer between $0$ and
					$q-2$ such that $\frac{\epsilon_Q}{q-1} \in \mathbf{e}_Q$.
					\item Write $\epsilon_Q=e_{Q,0} + e_{Q,1}p + \dots +e_{Q,a-1} p^{a-1}$,
					where $0 \leq e_{Q,i}\leq p-1$. We define $\gls{exponent digits}=\sum e_{Q,i}$, the
					sum of the $p$-adic digits of $\epsilon_Q$. Note that $\omega_Q=0$
					if and only if $\chi_Q$ is unramified.
				\end{itemize}
			\end{enumerate}
		
		We refer to the tuple $R_Q=(s_Q, \mathbf{e}_Q, \epsilon_Q, \omega_Q)$ as
		a \emph{ramification datum} and $T_Q=(\mathbf{e}_Q,\epsilon_Q,\omega_Q)$
		as a \emph{tame ramification datum}. We define the sets
		\begin{align*}
			S_{Q} &= \begin{cases}
				\emptyset & s_Q=0 \\
				\Big\{ \frac{1}{s_{Q}}, \dots, 
				\frac{s_{Q}-1}{s_{Q}}\Big\} & s_Q \neq 0 \text{ and } \omega_Q =0 \\
				\Big\{ \frac{1}{s_{Q}}- \frac{\omega_{Q}}{as_{Q}(p-1)}, \dots, 
				\frac{s_{Q}}{s_{Q}}- \frac{\omega_{Q}}{as_{Q}(p-1)}\Big\} & s_Q \neq 0 \text{ and } \omega_Q \neq 0 
			\end{cases}.
		\end{align*}
			\item (global) Let $\gls{taus}$ be the points at which
			$\rho$ ramifies and let $\mathbf{n}\leq \gls{gothm}$ be such that $\tau_1, \dots, \tau_{\mathbf{n}}$
			are the points at which $\chi$ ramifies. We define 
			\begin{align*}
			\gls{tame sum} &= \frac{1}{a(p-1)} \sum_{i=1}^\mathbf{n} \omega_{\tau_i}.
			\end{align*} 
			This is a global invariant built up from the $p$-adic properties of the local exponents.
			One can show that $\Omega_\rho \in \Z_{\geq 0}$ (see \S \ref{subsubsection: global frobenius structure tame}). 
		\end{enumerate}
		Using these invariants we define the Hodge polygon $\gls{Hodge polygon}$
		to be the polygon whose slopes are  
		\begin{align*} 
		\big 
		\{\underbrace{0,\dots,0}_{g-1+\boldm-\Omega_\rho}
		\big \}\sqcup \big 
		\{\underbrace{1,\dots,1}_{g-1+\boldm-\mathbf{n} + \Omega_\rho}
		\big \}
		\sqcup \Big ( \bigsqcup_{i=1}^\boldm S_{\tau_i} \Big ),
		\end{align*}
		where $\sqcup$ denotes a disjoint union. We can now state our main result.
		\begin{theorem} \label{main theorem}
			The $q$-adic Newton polygon $NP_q(L(\rho,s))$ lies above the Hodge
			polygon $HP(\rho)$.
		\end{theorem}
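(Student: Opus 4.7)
The plan is to realize $L(\rho,s)$ as the Fredholm determinant of a completely continuous Frobenius operator on an overconvergent function space, and then prove the Hodge bound by exhibiting a basis on which the operator decomposes (up to finite-rank corrections) into a sum of local contributions at each ramified point $Q \in W$.

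\textbf{Step 1 (Cohomological model).} Using the factorization $\rho = \rho^{wild} \otimes \chi$, I would build an overconvergent unit-root $F$-isocrystal on $V$ whose trace formula recovers $L(\rho,s)$ up to trivial factors. Concretely, this realizes $L(\rho,s)^{\pm 1}$ as $\det(1 - s\alpha \cdot N \mid H^1)$ for some cohomology $H^1$ modeled on $\mathcal{B}^\dagger$ (or its sum with $\bigoplus_{j=0}^{a-1}$-copies coming from the semilinearization $N = \bigoplus \chi^{\otimes p^j}$), with $\alpha$ the global wild $q$-Frobenius structure. The map $\eta : X \to \mathbb{P}^1$ is used here to pull back a Dwork splitting function, ensuring that both $\alpha$ and $N$ extend to operators with explicit overconvergence in the parameters $u_Q$ at each $Q \in W$.

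\textbf{Step 2 (Local normalization at each $Q\in W$).} After pulling back to the Robba rings $\mathcal{R}_Q^\dagger$, I would conjugate the global Frobenius to the well-behaved local $p$-Frobenius $C_Q$ via the wild and tame change-of-basis elements $b_Q$ and $M_Q$. The content is that $C_Q$ preserves the subspace $\mathcal{O}_{\mathcal{R}_Q}^{con}$ with a precisely controlled Newton polygon: the growth conditions in $\mathcal{O}_{\mathcal{R}_Q}^{con}$ are designed so that the slopes of $C_Q$ on $\mathcal{R}^{trun}_Q$ are exactly the set $S_Q$ in the Hodge polygon. The tame shift by $\omega_Q/(a s_Q(p-1))$ appears because the tame Frobenius eigenvalues $\chi^{\otimes p^j}$ contribute valuations $e_{Q,j}/(q-1)$, which sum to $\omega_Q/(q-1)$ after the $j$-th iterate.

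\textbf{Step 3 (Global assembly and estimate).} The next step is to set up a Mayer--Vietoris / \v{C}ech decomposition of $\mathcal{B}^\dagger$ using the covering by an affine interior and formal neighborhoods of each $Q \in W$. This presents $H^1$ as a finite-dimensional piece contributing the $g-1 + \mathbf{m}$ slopes of $0$ and $1$ (whose count is shifted by $\Omega_\rho$, coming from the integer parts of the tame exponents $e_Q$ needed to globalize $\chi$), plus the local pieces $\mathcal{O}_{\mathcal{R}_Q}^{con}$ contributing $S_Q$. A semilinear-algebra argument in the spirit of Wan or Adolphson--Sperber then bounds $NP_q(\det(1-s\,\alpha N))$ below by the sum of the local Newton polygons plus the slope $0$/$1$ contributions; this is precisely $HP(\rho)$.

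\textbf{Main obstacle.} The crux is Step 3: showing that the global Frobenius $\alpha \cdot N$, once transported by $b$ and $M$ into the direct sum $\mathcal{R}^\dagger = \bigoplus_{Q\in W} \mathcal{R}_Q^\dagger$, actually preserves the product of the local overconvergent subspaces $\mathcal{O}_{\mathcal{R}_Q}^{con}$ up to a finite-rank operator whose slopes are exactly the $0$s and $1$s (corrected by $\Omega_\rho$). This requires controlling the off-diagonal interaction between different $Q$'s, verifying that $b_Q$ and $M_Q$ do not distort the growth conditions defining $S_Q$, and matching the global integer $\Omega_\rho$ with the "wrap-around" of tame exponents across $W$. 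Once this decomposition is established, the Hodge bound follows from a routine estimate on the Fredholm determinant of a block-triangular compact operator.
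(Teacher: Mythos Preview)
Your proposal is essentially correct and follows the paper's approach closely: Monsky trace formula, passage to $\rho^{wild}\otimes\bigoplus_j\chi^{\otimes p^j}$ to obtain an $a$-th root $U_p\circ C$, local normalization via $b_Q,M_Q$ to the well-behaved Frobenius $C_Q$ preserving $\mathcal{O}_{\mathcal{R}_Q}^{con}$, and a global-to-local comparison producing the slope-zero block of size governed by $\Omega_\rho$.

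Two points of framing deserve correction. First, the paper does \emph{not} use a Mayer--Vietoris or \v{C}ech decomposition with an ``affine interior'' piece; rather it embeds the twisted global space $V=bM\big(\bigoplus_j\mathcal{B}(0,\mathbf{r}]\big)$ directly into $\mathcal{R}=\bigoplus_{Q\in W}\mathcal{R}_Q$ and projects onto the truncated principal parts $\mathcal{R}^{trun}$. The finite-dimensional ``slope $0$'' contribution is the kernel of this projection, and its dimension $a(g-1+r_0+r_1+r_\infty-\Omega_\rho)$ is computed by Riemann--Roch applied to the divisors $D_j=\sum_i(p-1)[P_{1,i}]-\sum_Q n_{Q,j}[Q]$; here $\Omega_\rho$ enters as $\frac{1}{a}\sum_{Q,j}n_{Q,j}$. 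Second, there is no ``off-diagonal interaction between different $Q$'s'' to control: once you twist by $bM$, the operator $U_p\circ C$ acts diagonally on $\mathcal{R}$ by construction. The genuine issue is that $V$ is not itself a direct sum over $Q$; the paper handles this by choosing a formal basis $\{x_ie_i\}$ with each $e_i=u_{Q,j}^{-n}+c_i$ where $c_i\in\mathcal{O}_{\mathcal{R}}^{con}$, and then bounding the Newton polygon via \emph{column} estimates (not a block-triangular argument), using that $U_p\circ C$ preserves $\mathcal{O}_{\mathcal{R}}^{con}$ globally.
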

		\begin{remark}
			It is worth mentioning that $HP(\rho)$ and $NP_q(L(\rho,s))$ have the
			same endpoints. To see this, first note that the $x$-coordinates of the endpoints of
			both polygons are $g-1+\mathbf{m} + \sum s_Q$. For $NP_q(L(\rho,s))$ this
			follows from the Euler-Poincare formula (see \cite[\S 2.3.1]{Katz-Kloosterman})
			and for $HP(\rho)$ this is clear from the definition. 
			Next, let $(s_{\tau_i}', \mathbf{e}_{\tau_i}', \epsilon_{\tau_i}', \omega_{\tau_i}')$ be
			the ramification datum associated to $\rho^{-1}$ at ${\tau_i}$. 
			Then 
			we have $s_{\tau_i}'=s_{\tau_i}$ and $\mathbf{e}_{\tau_i}'=-\mathbf{e}_{\tau_i}$. From this we see that
			$\omega_{\tau_{i}}'=a(p-1)-\omega_{\tau_{i}}$ for $1\leq i \leq \mathbf{n}$
			and $\omega_{\tau_i}'=0$ for $i>\mathbf{n}$, which
			implies $\Omega_{\rho^{-1}}=\mathbf{n}-\Omega_{\rho}$. Thus,
			for every slope $\alpha$ of $HP(\rho)$, there is a corresponding
			slope $1-\alpha$ of $HP(\rho^{-1})$. Similarly, by Poincare duality, we know that for every slope $\alpha$ of $NP_q(L(\rho,s))$, there is a corresponding slope $1-\alpha$ of $NP_q(L(\rho^{-1},s))$. It follows that the $y$-coordinates of the endpoints of $HP(\rho)\sqcup HP(\rho^{-1})$ and $NP_q(L(\rho,s))\sqcup NP_q(L(\rho^{-1},s))$
			agree. By applying Theorem \ref{main theorem} to $\rho$ and $\rho^{-1}$, we see that the endpoints
			of $HP(\rho)$ and $NP_q(L(\rho,s))$ are the same. 
		\end{remark}
	
		\begin{remark} \label{remark: previous work 1}
			When $\rho$ factors through an Artin-Schreier cover, Theorem \ref{main theorem}
			is due to previous work of the author (see \cite{kramermiller-padic}). 
		\end{remark}
		\begin{remark} \label{remark: previous work 2}
			The only other case where parts of Theorem \ref{main theorem} were
			previously known is when $X=\mathbb{P}^1$ and $\rho$ is unramified
			outside of $\mathbb{G}_m$. 
			Work of Adolphson-Sperber
			(see \cite{Adolphson-Sperber-twisted1} and \cite{Adolphson-Sperber-twisted2})
			studies the case where $|Im(\rho)|=pN$ and $\gcd(p,N)=1$. We note that
			the work of Adolphson and Sperber treats the case of higher dimensional tori as well. 
			These groundbreaking methods were applied to the case when $\rho$ is totally wild by Liu and Wei in \cite{Liu-Wei-witt-coverings}, introducing ideas from Artin-Schreier-Witt theory.
			For $\rho$ with arbitrary image there are some results by Liu (see \cite{Liu-Heilbronn_sums}),
			under strict conditions on the wild part of $\rho$ (this case corresponds to Heilbronn sums). 
		\end{remark}
		To the best of our knowledge, Theorem \ref{main theorem} was
		completely unknown outside of the situations described in Remark \ref{remark: previous work 1} and Remark \ref{remark: previous work 2}.
		
		\begin{example}
			Let $X=\mathbb{P}^1_{\mathbb{F}_q}$ and let $\tau_1,\dots,\tau_{4}$ be
			the points where $\rho$ ramifies.  Assume $|Im(\rho)|=2p^n$ and that $\rho$ is totally ramified at each $\tau_i$ (i.e. the inertia group at $\tau_i$ is equal to $Im(\rho)$). Let $f:E \to X$ be the
			genus $1$ curve over which $\chi$ trivializes and let $\upsilon_i=f^{-1}(\tau_i)$. 
			Consider the restriction $\rho_E=\rho|_{G_E}$. 	Let $(s_i, \mathbf{e}_i, \epsilon_i, \omega_i)$ be the ramification datum of $\rho$ at $\tau_i$ and let $(s_i', \mathbf{e}_i', \epsilon_i', \omega_i')$ be the ramification datum of $\rho_E$ at $\upsilon_i$. By Theorem \ref{main theorem}
			we know that $NP_q(L(\rho_E,s))$ lies above 
			\begin{align*} 
			HP(\rho_E)&=\big 
			\{0,0,0,0 \}\sqcup \big 
			\{1,1,1,1 \}
			\sqcup \Big ( \bigsqcup_{i=1}^4 \Big\{\frac{1}{2s_i}, \dots, \frac{2s_i-1}{2s_i}\Big\} \Big ).
			\end{align*} 
			This follows by recognizing that $s_i'=2s_i$ and $\omega_i'=0$. 
			The factorization $L(\rho_E,s)=L(\rho,s)L(\rho^{wild},s)$ corresponds
			to a ``decomposition'' of $HP(\rho_E)$ into two Hodge polygons, one giving a lower
			bound of $NP_q(L(\rho,s))$ and the other for $NP_q(L(\rho^{wild},s))$.
			We have $\omega_i=\frac{a(p-1)}{2}$ for each $i$
			and $\Omega_\rho=2$. This allows us to compute the Hodge polygons as 
			\begin{align*}
				HP(\rho)&= \{0,1\} \sqcup \Big ( \bigsqcup_{i=1}^4\Big \{\frac{1}{2s_i}, \frac{3}{2s_i}, \dots, \frac{2s_i-1}{2s_i}\Big\} \Big ),\\
				HP(\rho^{wild}) &=  \big 
				\{0,0,0 \}\sqcup \big 
				\{1,1,1 \}
				\sqcup \Big ( \bigsqcup_{i=1}^4 \Big\{\frac{1}{s_i}, \dots, \frac{s_i-1}{s_i}\Big\} \Big ),
			\end{align*}
			so that $HP(\rho_E)=HP(\rho)\sqcup HP(\rho^{wild})$. 
			More generally, we will obtain similar decompositions of the Hodge bounds
			as long as $Im(\chi)|p-1$. 
			
		\end{example}
		\subsubsection{Newton polygons of abelian covers of curves}
		Theorem \ref{main theorem}
		also has interesting consequences for Newton polygons of 
		cyclic covers of curves. Let $G=\Z/Np^n\Z$, where $N$ is coprime to
		$p$. Let $f: C \to X$ be a $G$-cover
		ramified over $\tau_1,\dots,\tau_\boldm$. 
		We let $H^1_{cris}(X)$ (resp. $H^1_{cris}(C)$) be the crystalline cohomology
		of $X$ (resp. $C$). For a character $\rho$ of $G$, we let
		$H^1_{cris}(C)^\rho$ be the $\rho$-isotypical subspace for the action of $G$ on $H^1_{cris}(C)$. 
		Let $NP_C$ (resp. $NP_X$ and $NP_C^\rho$) denote the $q$-adic Newton polygon
		of $\det(1-s\text{F}|H^1_{cris}(C))$ (resp. $\det(1-s\text{F}|H^1_{cris}(X))$ and 
		$\det(1-s\text{F}|H^1_{cris}(C)^\rho)$).
		We are interested in the following question: to what extent can
		we determine $NP_C$ from $NP_X$ and the ramification invariants of $f$? 
		The most basic result is the Riemann-Hurwitz formula, which determines
		the dimension of $H^1_{cris}(C)$ from $H^1_{cris}(X)$ and the ramification invariants.
		When $N=1$, there is also the Deuring-Shafarevich
		formula (see \cite{Crew-p-covers}), which determines the number of slope zero segments in $NP_C$. In general, however, a 
		precise
		formula for the slopes of $NP_C$ seems impossible. Instead,
		the best we may hope for are estimates. To connect
		this problem to Theorem \ref{main theorem}, recall the
		decomposition:
		\begin{align} \label{decomposition of H1}
		\det(1-s\text{F}|H^1_{cris}(C))&= \det(1-s\text{F}|H^1_{cris}(X)) \prod_{\rho} \det(1-s\text{F}|H^1_{cris}(C)^\rho),
		\end{align}
		where $\rho$ varies over the nontrivial characters 
		$\Z/Np^n\Z \to \mathbb{C}^\times$. By the Lefschetz trace formula we know
		$L(\rho,s)=\det(1-s\text{F}|H^1_{cris}(C)^\rho)$. Thus, Theorem \ref{main theorem} gives
		lower bounds for $NP_C$ using \eqref{decomposition of H1}.

		Consider the case where $N=1$, so that $G=\Z/p^n\Z$. Let
		$r_i$ be the ramification index of a point of $C$ above $\tau_i$ and define \[\Omega = \sum_{i=1}^\boldm p^{n-r_i}(p^{r_i}-1).\] For
		$j=1,\dots,n$, let $C_j$ be the cover of $X$ corresponding to the subgroup
		$p^{n-j}\Z / p^n\Z \subset G$. Fix a point $x_i(j)\in C_j$ above $\tau_i$; this gives a local field extension
		of $\mathbb{F}_q((t_{\tau_i}))$. 
		We let $s_{\tau_i}(j)$ denote the
		largest upper numbering ramification break of this extension. 
		\begin{corollary} \label{corollary: ASW Newton Polygon}
			The Newton polygon $NP_C$ lies above the polygon 
			whose slopes are the multi-set: 
			\[ NP_X \sqcup  
			\Bigg\{\underbrace{0,\dots,0}_{(p^n-1)(g-1)+\Omega }, 
			\underbrace{1,\dots,1}_{(p^n-1)(g-1)+\Omega }\Bigg\} \sqcup \Bigg(\bigsqcup_{i=1}^\boldm \bigsqcup_{j=1}^n
			p^{j-1}(p-1)\times \Bigg\{\frac{1}{s_{\tau_i}(j)}, \dots, \frac{s_{\tau_i}(j)-1}{s_{\tau_i}(j)} \Bigg\}\Bigg),\]
			where we take $\{\frac{1}{s_{\tau_i}(j)}, \dots, \frac{s_{\tau_i}(j)-1}{s_{\tau_i}(j)} \}$
			to be the empty set when $s_{\tau_i}(j)=0$.
		\end{corollary}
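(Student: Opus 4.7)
The plan is to apply Theorem \ref{main theorem} to each nontrivial character $\rho$ of $G=\Z/p^n\Z$ and to assemble the bounds via the decomposition \eqref{decomposition of H1}. Since $G$ is a $p$-group, every nontrivial $\rho$ has trivial tame part ($\chi=1$), so $\omega_Q=0$ for all $Q$ and $\Omega_\rho=0$. Hence Theorem \ref{main theorem} asserts that $NP_q(L(\rho,s))$ lies above the polygon with slopes
\[
\{\underbrace{0,\ldots,0}_{g-1+\mathbf{m}_\rho}\}\sqcup\{\underbrace{1,\ldots,1}_{g-1+\mathbf{m}_\rho}\}\sqcup\bigsqcup_{i}\Big\{\tfrac{1}{s_{\tau_i,\rho}},\ldots,\tfrac{s_{\tau_i,\rho}-1}{s_{\tau_i,\rho}}\Big\},
\]
where $\mathbf{m}_\rho$ is the number of ramification points of $\rho$, the last union runs over those $i$ for which $\rho$ ramifies at $\tau_i$, and $s_{\tau_i,\rho}$ is the Swan conductor of $\rho$ at $\tau_i$. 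Combined with the Lefschetz trace formula and \eqref{decomposition of H1}, this places $NP_C$ above $NP_X$ concatenated with $\bigsqcup_{\rho\ne 1}HP(\rho)$.

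Next I would collect contributions. Since $G$ is cyclic, the inertia subgroup $I_{\tau_i}\subset G$ is the unique subgroup of order $p^{r_i}$, so a character $\rho$ of order $p^j$ (of which there are $\varphi(p^j)=p^{j-1}(p-1)$) is ramified at $\tau_i$ iff $I_{\tau_i}\not\subset\ker\rho$, i.e. iff $j>n-r_i$. Summing:
\[
\sum_{\rho\ne 1}\mathbf{m}_\rho=\sum_{i=1}^{\mathbf{m}}\big(p^n-p^{n-r_i}\big)=\Omega,
\]
which accounts for the $(p^n-1)(g-1)+\Omega$ zero slopes and matching one slopes claimed.

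The decisive technical step is to match the Swan conductor $s_{\tau_i,\rho}$ of a character $\rho$ of order $p^j$ (with $j>n-r_i$) at $\tau_i$ with the invariant $s_{\tau_i}(j)$ from the statement. Let $b_1^{(i)}<\cdots<b_{r_i}^{(i)}$ denote the upper numbering ramification breaks of the full local extension at $\tau_i$ inside $C\to X$. The restriction $\rho|_{I_{\tau_i}}$ is a faithful character of a cyclic quotient of $I_{\tau_i}$ of order $p^{j-n+r_i}$, so its Swan conductor equals $b_{j-n+r_i}^{(i)}$. On the other hand, the local inertia of $C_j\to X$ at $x_i(j)$ is the image of $I_{\tau_i}$ in $\mathrm{Gal}(C_j/X)$, which has order $p^{j-n+r_i}$, and by the functoriality of the upper numbering filtration under quotients its largest break is again $b_{j-n+r_i}^{(i)}$. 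Therefore the Swan contribution $\bigsqcup_{\rho\ne 1}\bigsqcup_{i}\{\tfrac{k}{s_{\tau_i,\rho}}\}_{k=1}^{s_{\tau_i,\rho}-1}$ reorganizes as $\bigsqcup_{i=1}^{\mathbf{m}}\bigsqcup_{j=1}^{n} p^{j-1}(p-1)\cdot\{\tfrac{k}{s_{\tau_i}(j)}\}_{k=1}^{s_{\tau_i}(j)-1}$, with the empty-set convention absorbing the cases $j\le n-r_i$. The main obstacle is this last piece of ramification bookkeeping when $r_i<n$: one has to track carefully how the upper numbering filtration on $I_{\tau_i}$ pushes forward under the quotient maps to the local inertia of each intermediate cover $C_j$.
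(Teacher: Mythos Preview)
Your proposal is correct and follows exactly the approach the paper intends: the corollary is stated without an explicit proof, as it is meant to follow immediately from Theorem~\ref{main theorem} applied to each nontrivial character of $G=\Z/p^n\Z$ together with the decomposition~\eqref{decomposition of H1}. You have filled in the details correctly, including the key identification $s_{\tau_i,\rho}=s_{\tau_i}(j)$ for $\rho$ of order $p^j$, which rests on the compatibility of the upper numbering filtration with quotients (Herbrand's theorem), and the counting argument giving $\sum_{\rho\neq 1}\mathbf{m}_\rho=\Omega$.
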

		\begin{remark}
			When $N >1$, we can obtain a complicated bound for $NP_C$ from Theorem \ref{main theorem} and \eqref{decomposition of H1}.  Alternatively, we can replace $X$ with the intermediate curve $X^{tame}$ satisfying
			$Gal(C/X^{tame})=\Z/p^n\Z$ and then apply Corollary \ref{corollary: ASW Newton Polygon} to
			the cover $C \to X^{tame}$ to obtain a bound. Both bounds are the same. 
		\end{remark}
		\subsection{Outline of proof}
		The classical approaches to studying $p$-adic properties of exponential sums on tori
		no longer work when one considers more general curves.
		Instead, we have to expand on the methods developed in earlier work of
		the author on exponential sums on curves (see \cite{kramermiller-padic}). 
		We use the Monsky trace formula (see \S \ref{subsection: MW trace formula}).
		This trace formula allows us to compute $L(\rho,s)$ by studying Fredholm determinants of certain operators.
		More precisely, let $V=X-\{\tau_1,\dots,\tau_\boldm \}$ and let $\overline{B}$ be the
		coordinate ring of $V$. Let $L$ be a finite extension of $\Q_p$
		whose residue field is $\mathbb{F}_q$ such that the image of $\rho$ is contained
		in $L^\times$. Let $B^\dagger$ be the ring of
		\emph{integral overconvergent functions} on a formal lifting of $\overline{B}$ 
		over $\mathcal{O}_L$ (see  \S \ref{section: global bounds}). For example, if $V=\mathbb{A}^1$, then $B^\dagger=\mathcal{O}_L\ang{t}^\dagger$ (i.e. $B^\dagger$ is the ring of power series
		with integral coefficients that converge beyond the closed unit disc). 
		Choose an endomorphism $\sigma: B^\dagger \to B^\dagger$ that
		lifts the $q$-power Frobenius of $\overline{B}$. 
		Using $\sigma$ we define an operator $U_q: B^\dagger\to 
		B^\dagger$, which is the composition
		of a trace map $Tr:B^\dagger\to \sigma(B^\dagger)$ with 
		$\frac{1}{q}\sigma^{-1}$.

		The Galois representation $\rho$ corresponds to
		a unit-root overconvergent $F$-crystal of rank one. 
		This is a 
		$B^\dagger$-module $M=B^\dagger e_0$
		and a $B^\dagger$-linear isomorphism $\varphi: M \otimes_{\sigma} 
		B^\dagger \to M$. 
		Note that this $F$-crystal is determined 
		entirely by $\alpha \in B^\dagger$ satisfying
		$\varphi(e_0 \otimes 1) = \alpha e_0$. We refer to $\alpha$
		as the Frobenius structure of $M$.
		In our specific setup (see \S \ref{section: global bounds}),
		the Monsky trace formula can be written as follows:
		\begin{align*}
		L(\rho,s) &= \frac{\det(1-sU_q \circ \alpha |B^\dagger)}
		{\det(1-sqU_q \circ \alpha |B^\dagger)},
		\end{align*}
		where we regard $\alpha$ as the ``multiplication by $\alpha$'' map on $B^\dagger$.
		Thus, we need to understand the operator $U_q \circ \alpha$. Let us
		outline how we study this operator.
				
		\paragraph{Lifting the Frobenius endomorphism.} 
		Both $U_q$ and $\alpha$ depend on the choice of Frobenius endomorphism $\sigma$.
		When $V=\mathbb{G}_m$, the ring $B^\dagger$ is $\mathcal{O}_L\langle t \rangle^\dagger$, and the natural choice 
		for $\sigma$ sends $t$ to $t^q$. However, no such natural
		choice exists for higher genus curves. Our approach is to pick a convenient mapping
		$\eta:X \to \mathbb{P}^1$ and then pull back the Frobenius $t \mapsto t^q$ along $\eta$.
		We take $\eta$ to be a tamely ramified map that is \'etale outside of $\{0,1,\infty\}$. 
		We may further assume that $\eta(\tau_i) \in \{0,\infty\}$ and the ramification index
		of every point in $\eta^{-1}(1)$ is $p-1$ (see Lemma \ref{lemma: map to p1}). 
		This leaves us with two types of local Frobenius endomorphisms. For	$Q \in X$
		with $\eta(Q)\in \{0,\infty\}$, we may take the local parameter at $Q$
		to look like $u_Q=t^{\pm\frac{1}{e_Q}}$, where $e_Q$ is the ramification index at $Q$. In particular, the Frobenius endomorphism
		sends $u_Q \mapsto u_Q^q$. If $\eta(Q)=1$, we take the local parameter
		to look like $u_Q=\sqrt[p-1]{t-1}$. Thus, the Frobenius endomorphism
		sends $u_Q \mapsto \sqrt[p-1]{(u_Q^{p-1}+1)^{p}-1}$. In \S \ref{section: local Up operators}
		we study $U_q$ for both types of local Frobenius endomorphisms and in
		\S \ref{subsection: some local Frobenius structures} we study the local
		versions of the Frobenius structure $\alpha$.  
		
		\paragraph{The problem of $a$-th roots of $U_q\circ \alpha$.}
		To obtain the correct estimates of $\det(1-sU_q \circ \alpha |B^\dagger)$,
		it is necessary to work with an $a$-th root of $U_q\circ \alpha$. That is,
		we need an element $\alpha_0 \in B^\dagger$ and a $U_p$ operator (this is analogous
		to the $U_q$ operator, but for liftings of the $p$-power endomorphism) such that
		$(U_p\circ \alpha_0)^a=U_q\circ \alpha$. However, this $a$-th root
		is only guaranteed to exist if the order of $Im(\chi)$ divides $p-1$ (see \S \ref{subsection: local rank one crystals}). This presents a major technical obstacle. The solution is to consider $  \rho^{wild}\otimes\bigoplus\limits_{j=0}^{a-1}\chi^{\otimes p^j}$, which is a restriction of scalars of $\rho$. The $L$-functions of each summand are Galois conjugate, and thus have
		the same Newton polygon. We can then study an operator $U_p \circ N$, where
		$N$ is the Frobenius structure of the $F$-crystal associated to $  \rho^{wild}\otimes\bigoplus\limits_{j=0}^{a-1}\chi^{\otimes p^j}$.
		This is similar to the idea used in
		Adolphson and Sperber's study of twisted exponential sums on tori (see \cite{Adolphson-Sperber-twisted1}). 
		They present it in an ad-hoc manner, but the underlying idea is to study
		$  \rho^{wild}\otimes\bigoplus\limits_{j=0}^{a-1}\chi^{\otimes p^j}$ in lieu of $\rho$.

		\paragraph{Global to local computations.}
		When $V$ is $\mathbb{G}_m$ or $\mathbb{A}^1$, the ring $B^\dagger$
		is just $\mathcal{O}_L\ang{t}^\dagger$ or $\mathcal{O}_L\ang{t,t^{-1}}^\dagger$. In both cases,
		it is relatively easy to study operators on $B^\dagger$. 
		The situation is more complex for higher genus curves. Our approach
		to make sense of $B^\dagger$ is to ``expand'' each function
		around the $\tau_i$ (and some other auxiliary points). Namely, let
		$t_i\in B^\dagger$ be a function whose reduction in $\overline{B}$ has a simple zero
		at $\tau_i$. We let $\mathcal{O}_{\mathcal{E}_i^\dagger}$ be the ring of formal Laurent
		series in $t_i$ that converge on an annulus $r<|t_i|_p<1$ (i.e. the bounded Robba ring).
		Any $f \in B^\dagger$ has a Laurent expansion in $t_i$, and
		our overconvergence condition implies this expansion lies in 
		$\mathcal{O}_{\mathcal{E}_i^\dagger}$. We obtain an injection:
		\begin{align} \label{local to global expansion eq1}
		B^\dagger \hookrightarrow  \bigoplus_{i=1}^{\mathbf{m}} \mathcal{O}_{\mathcal{E}_i^\dagger}.
		\end{align}
		The operator $U_p \circ N$ extends to an operator on each
		summand. By carefully
		keeping track of the image of $B^\dagger$, we are able to compute on each summand (see \S \ref{subsection: estimating NP_p}). 
		This
		lets us compute $U_p\circ N$ on the bounded Robba ring, which ostensibly looks
		like a ring of functions on $\mathbb{G}_m$. We are thus able to compute
		$U_p \circ N$ by studying local Frobenius structures and local $U_p$ operators.
		
		\paragraph{Comparing Frobenius structures and $\Omega_\rho$.}
		In \S \ref{subsection: some local Frobenius structures} we
		study the shape of the unit-root $F$-crystal associated
		to $\rho$ when we localize at a ramified point $\tau_i$. 
		We show that the localized unit-root $F$-crystal has a particularly nice Frobenius structure, which depends
		on the ramification datum. However, these well-behaved local Frobenius structures do not
		patch together to give a well-behaved global Frobenius structure. This is a major technical obstacle.
		When comparing local and global
		Frobenius structures we end up having to ``twist'' the image
		of \eqref{local to global expansion eq1}. This process explains the invariant
		$\Omega_\rho$ occurring in Theorem \ref{main theorem}-- it arises by ``averaging'' the local exponents for each $\rho^{wild} \otimes \chi^{\otimes p^i}$. 
		This invariant is essentially absent in the work of Adolphson-Sperber,
		since $\Omega_\rho=1$ if $V=\mathbb{G}_m$. It is also absent in the author's previous work,
		where the local exponents were all zero.

		\subsection{Further remarks}
		Pinning down the exact Newton polygon of a covering
		of a curve, as well the Newton polygon of the isotypical constituents, is
		a fascinating question. A general answer seems impossible, but one can
		certainly hope for results that hold generically. If the genus of $X$
		and the monodromy invariants from \S \ref{subsection: statement of main results} are
		specified, what is the Newton polygon for a generic character? 
		We believe the bound from Theorem \ref{main theorem} should only
		be generically attained if $N|p-1$ and there are some congruence relations
		between $p$ and the Swan conductors. When $\rho$ factors through
		an Artin-Schreier cover, this is known by combining work of the author (see \cite{kramermiller-padic})
		together with work of Booher-Pries (see \cite{Pries-Booher}). The next step would be
		to study the case arising from a cyclic cover whose degree divides $p(p-1)$ (or
		even allowing higher powers of $p$).  
		When $N\nmid p-1$, the bound from Theorem \ref{main theorem} has too many
		slope zero segments. The issue is that a generic tame cyclic cover of degree $N$
		is not ordinary, even if $X$ is ordinary (see \cite{Bouw}). Even when $X=\mathbb{P}^1$,
		the study of Newton polygons
		for tame cyclic covers is already a complicated topic
		(see e.g. \cite{Li-Mantovan-Pries-Tang-families_of_cyclic}). The author plans to return to these questions
		at a later time. It would also be interesting to prove Hodge bounds for
		representations with positive weight. In recent work of Fres\'an-Sabbah-Yu, the authors
		use irregular Hodge theory to study the $p$-adic slopes of symmetric powers of
		Kloosterman sums (see \cite{fresn2018hodge}). Not much is known beyond this case.

		\subsection{Acknowledgments}
		Throughout the course of this work, we have
		benefited greatly from conversations with
		Daqing Wan, Stephen Sperber and Rachel Pries. We also
		thank Jeng-Daw Yu, who asked us if the methods from \cite{kramermiller-padic} could be applied to
		character sums. We would also like to thank the anonymous referee for many helpful suggestions.

	\section{Notation}
	
	\subsection{Conventions} \label{section: convensions}
	
	The following conventions will be used throughout the article.
	We let $\mathbb{F}_q$ be an extension of $\mathbb{F}_p$ with
	$a=[\mathbb{F}_q:\mathbb{F}_p]$. It is enough
	to prove Theorem \ref{main theorem}
	after replacing $q$ with a larger power of $p$. In particular, we increase $q$ throughout the article 
	if it simplifies arguments. We will frequently have families of 
	things indexed by $i=0,\dots, a-1$ (e.g. the $p$-adic digits $e_{Q,i}$ of $\epsilon_{Q}$
	from \S \ref{subsection: statement of main results}). It will be convenient
	to have the indices ``wrap around'' modulo $a$. That is, we take $e_{Q,a}$ to be
	$e_{Q,0}$, $e_{Q,a+1}$ to be $e_{Q,1}$, and so forth.

	Let $L_0$ be the 
	unramified
	extension of $\Q_p$ whose residue field is $\mathbb{F}_q$. Let $E$
	be a finite totally ramified extension of $\Q_p$ of degree $e$ and set 
	$L=E\otimes_{\Q_p} L_0$. Define $\mathcal{O}_L$ (resp. $\mathcal{O}_E$) to be the ring of integers of $L$ (resp. $E$) and let
	$\gothm$ be the maximal ideal of $\mathcal{O}_L$. We let $\pi_\circ$ be a uniformizing element of $E$. Fix $\pi=(-p)^{\frac{1}{p-1}}$ and for any positive rational number 
	$s$ 
	we set 
	$\pi_s=\pi^{\frac{1}{s}}$.  We will assume that $E$ is large enough to contain 
	$\pi_{s_{\tau_i}}$ for each $i=1,\dots, \mathbf{m}$. We also
	assume that $E$ is large enough to contain the image of $\rho^{wild}$ (i.e. $E$ contains
	enough $p$-th power roots of unity).
	Define $\nu$ to be
	the endomorphism $\text{id}\otimes \text{Frob}$ of $L$,
	where $\text{Frob}$ is the $p$-Frobenius automorphism of $L_0$. 
	For any $E$-algebra $R$ and $x \in R$, we obtain an operator $R \to R$
	sending $r \mapsto xr$. By abuse of notation, we will refer to this operator as $x$. Finally,
	for any ring $R$ 
	with
	valuation $v:R \to \mathbb{R}$ and any $x \in R$ with
	$v(x)>0$, we let $v_x(\cdot)$ denote the normalization
	of $v$ satisfying $v_x(x)=1$.

	\subsection{Frobenius endomorphisms} \label{subsection: Frobenius endomorphisms}
	Let $\overline{A}$ be an $\mathbb{F}_q$-algebra,
	let $A$ be an $\mathcal{O}_L$-algebra with $A\otimes_{\mathcal{O}_L} \mathbb{F}_q = \overline{A}$,
	and let $\mathcal{A}=A \otimes_{\mathcal{O}_L} L$. 
	A $p$-Frobenius endomorphism (resp. $q$-Frobenius endomorphism) of $A$ is a ring endomorphism $\nu:A \to A$ (resp. $\sigma: A \to A$)
	that extends the map $\nu$ (resp. $\nu^a=id$) on $\mathcal{O}_L$ defined in \S \ref{section: convensions} and reduces to the $p$-th power map (resp. $q$-th power map) of $\overline{A}$. 
	Note that $\nu$ (resp. $\sigma$) extends to a map $\nu: \mathcal{A} \to \mathcal{A}$ (resp. $\sigma: \mathcal{A} \to \mathcal{A}$), which
	we refer to as a $p$-Frobenius endomorphism (resp. $q$-Frobenius endomorphism) of $\mathcal{A}$. For
	a square matrix $M=(m_{i,j})$ with entries in $\mathcal{A}$ we take $M^{\nu^k}$ to mean the matrix $(m_{i,j}^{\nu^k})$
	and we define $M^{\nu^{a-1} + \dots + \nu + 1}$ by $M^{\nu^{a-1}} \dots M^\nu M$.

	\subsection{Definitions of local rings} \label{subsection: basic definitions}
	We begin by defining some rings
	and modules, which will be used throughout this article.
	Define the $L$-algebras:
	\[  \gls{Amice} = \Bigg\{ \sum_{-\infty}^\infty a_nt^n \Bigg |  
	\begin{array}  {l}
	\text{ We have } a_n\in L, ~\lim\limits_{n\to-\infty} v_p(a_n)=\infty,   \\
	\text{ and}	~	v_p(a_n) \text{ is bounded below.} 
	\end{array}
	\Bigg \},  
	\]\[
	\gls{Bounded Robba} = \Bigg\{ \sum_{-\infty}^\infty a_nt^n  \in 
	\mathcal{E} \Bigg |  
	\begin{array}  {l}
	\text{ There exists $m>0$ such that} \\
	v_p(a_n) \geq -mn \text{ for $n\ll 0$} 
	\end{array}
	\Bigg \}.  
	\]
	We refer to $\mathcal{E}_t$ (resp. $\mathcal{E}^\dagger_t$)
	as the Amice ring (resp. the bounded Robba ring) over $L$ with parameter $t$. We will often omit the $t$ in the subscript if there is no ambiguity. Note that 
	$\mathcal{E}^\dagger$ and 
	$\mathcal{E}$  are local fields with residue field
	$\mathbb{F}_q((t))$. The valuation $v_p$ on $L$ extends to
	the Gauss valuation on each of these fields. 
	We define $\mathcal{O}_{\mathcal{E}}$ (resp. $\mathcal{O}_{\mathcal{E}^\dagger}$)
	to be the subring of $\mathcal{E}$ (resp. $\mathcal{E}^\dagger$) consisting
	of formal Laurent series with coefficients in $\mathcal{O}_L$. 
	Let $u \in \mathcal{O}_{\mathcal{E}^\dagger}$ such that the reduction
	of $u$ in $\mathbb{F}_q((t))$ is a uniformizing element. Then we have
	$\mathcal{E}_u = \mathcal{E}$ (resp. $\mathcal{E}_u^\dagger=\mathcal{E}^\dagger$).
	In particular, we see that $u$ is a different parameter of $\mathcal{E}$.  
	Note that
	if $\nu:\mathcal{E}\to\mathcal{E}$ is any $p$-Frobenius endomorphism,
	we have $\mathcal{E}^{\nu=1}=E$.
	For $m \in \Z$, we define the $L$-vector space of truncated
	Laurent series: 
	\[
	\mathcal{E}^{\leq m} = \Bigg\{ \sum_{-\infty}^\infty a_nt^n  \in 
	\mathcal{E} \Bigg |  
	\begin{array}  {l}
	a_n=0\text{ for all $n>m$} \\
	\end{array}
	\Bigg \}  .
	\]
	The space $\mathcal{E}^{\leq 0}$ is a ring and  $\mathcal{E}^{\leq m}$
	is an $\mathcal{E}^{\leq 0}$-module. There is a natural projection
	$\mathcal{E} \to \mathcal{E}^{\leq m},$
	given by truncating the Laurent series. 
	Finally, we define the following $\mathcal{O}_L$-algebra:
	\[
	\mathcal{O}_{\mathcal{E}(0,r]} = \Bigg\{ \sum_{-\infty}^\infty a_nt^n  
	\in 
	\mathcal{O}_{\mathcal{E}} \Bigg |  
	\lim_{n\to-\infty} v_p(a_{n})  +rn =\infty
	\Bigg \}.  
	\]
	Set $\gls{r-convergent functions}=\mathcal{O}_{\mathcal{E}(0,r]}\otimes_{\mathcal{O}_L} L$. Note that $\mathcal{E}(0,r]$ is the ring of bounded functions on the closed annulus $0<v_p(t)\leq r$. In particular, we have $\mathcal{E}^\dagger= \bigcup\limits_{r>0} \mathcal{E}(0,r]$.
	
	\subsection{Matrix notation} 
	For any
	$c_0,\dots, c_{a-1} \in \mathcal{E}$, we define the $a\times a$ matrices:
	\begin{align*}
		\diag{c_0,\dots,c_{a-1}} &= 
			\begin{pmatrix}
			c_{0} & & \\
			& \ddots & \\
			& & c_{a-1}
			\end{pmatrix}, \\
		\cyc{c_0, \dots, c_{a-1}} & =
			\begin{pmatrix}
			 & c_{0} & & \\
			& & \ddots & \\
			& & & c_{a-2} \\
			 c_{a-1} & &&
			\end{pmatrix},\\
		\tcyc{c_0, \dots, c_{a-1}} & =	 \cyc{c_0, \dots, c_{a-1}}^T.
	\end{align*}

	\section{Global setup} \label{Section global bounds}
	\label{section: global bounds}
	We now introduce the global setup, which closely follows \cite[\S 3]{kramermiller-padic}. We adopt the notation from \S \ref{subsection: statement of main results}. 
	Our main goal is to choose a Frobenius endomorphism on
	a lift of an affine subspace of $X$. We require two things
	from this Frobenius endomorphism. First, we want an endomorphism that
	 behaves reasonably with respect to certain local parameters. Our
	 second requirement is for this Frobenius endomorphism to
	 make the Monsky trace formula satisfy a certain form (see \S \ref{subsection: MW trace formula}).
	 We find this Frobenius endomorphism by bootstrapping from the 
	 standard Frobenius endomorphism on the projective line. 
	
	\subsection{Mapping to $\mathbb{P}^1$}
	\label{subsection: mapping to p1}
	\begin{lemma} \label{lemma: map to p1}
		After increasing $q$, there exists
		a tamely ramified morphism $\eta:X \to \mathbb{P}_{\mathbb{F}_q}^1$,
		ramified only above $0,1$, and $\infty$, such that $\tau_1,\dots, \tau_{\mathbf{m}} \in 
		\eta^{-1}(\{0,\infty\})$ and each $P \in \eta^{-1}(1)$ has ramification index 
		$p-1$.
	\end{lemma}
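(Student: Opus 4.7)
The plan is to construct $\eta$ in three stages: a Riemann--Roch construction, a tame Belyi reduction, and a final composition with a specific degree $p-1$ polynomial. First, after possibly enlarging $q$ so that each $\tau_i$ is $\mathbb{F}_q$-rational, pick positive integers $n_i$ coprime to $p$ with $\sum_i n_i \ge 2g+1$. Applying Riemann--Roch to the divisor $D = \sum_i n_i \tau_i$ produces a rational function $f_0 \in K(X)$ whose pole divisor equals $D$, and hence a tame morphism $f_0 \colon X \to \mathbb{P}^1$ with $\tau_i \in f_0^{-1}(\infty)$. A generic choice of such $f_0$ will be tamely ramified at all other critical points as well, since being tame is a Zariski-open condition inside the available linear system.

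Second, I would reduce the branch locus of $f_0$ to $\{0,1,\infty\}$ via the tame positive-characteristic version of Belyi's trick: iteratively post-compose with tame polynomials $g_i \colon \mathbb{P}^1 \to \mathbb{P}^1$ of degree coprime to $p$ that fix the point at infinity and fold additional branch points into $\{0,1,\infty\}$. After finitely many steps this yields a tame morphism $\eta_1 \colon X \to \mathbb{P}^1$ ramified only above $\{0,1,\infty\}$ with $\tau_i \in \eta_1^{-1}(\{0,\infty\})$. The main obstacle lies here: one must verify that each intermediate polynomial can be chosen to have degree coprime to $p$ (to preserve tameness) and to fix $\infty$ (so that the $\tau_i$ remain in the preimage of $\infty$ throughout the reduction), possibly requiring a further enlargement of $q$ to ensure the needed auxiliary points are rational.

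Finally, to force the ramification above $1$ to be uniformly $p-1$, fix $\zeta \in \mathbb{F}_p^\times \setminus \{-1\}$ (which exists since $p \ge 3$) and set $r := 1/(1+\zeta) \in \mathbb{F}_p \setminus \{0,1,\infty\}$. Define the degree $p-1$ polynomial
\[ \psi(t) := 1 - \left(\frac{t-r}{r}\right)^{p-1}. \]
A direct calculation using $(-1)^{p-1} = 1$ and $\zeta^{p-1} = 1$ shows that $\psi(0) = \psi(1) = 0$, $\psi(\infty) = \infty$, $\psi^{-1}(1) = \{r\}$ with ramification index $p-1$, and the branch locus of $\psi$ equals $\{1,\infty\}$. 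Set $\eta := \psi \circ \eta_1$. Then the branch locus of $\eta$ lies in $\psi(\{0,1,\infty\}) \cup \{1,\infty\} = \{0,1,\infty\}$; the image $\eta(\tau_i) \in \psi(\{0,\infty\}) = \{0,\infty\}$; and since $r \notin \{0,1,\infty\}$ and $\eta_1$ is ramified only above $\{0,1,\infty\}$, the map $\eta_1$ is unramified at every point of $\eta_1^{-1}(r) = \eta^{-1}(1)$. Hence the composite ramification index at each such point is exactly $1 \cdot (p-1) = p-1$, as required.
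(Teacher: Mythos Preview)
The paper does not prove this lemma here; it simply cites \cite[Lemma 3.1]{kramermiller-padic}. Your three-stage construction---Riemann--Roch plus genericity to get a tame map with $\tau_i \mapsto \infty$, then tame Belyi reduction fixing $\infty$, then post-composition with an explicit degree-$(p-1)$ polynomial---is the standard argument and almost certainly matches what appears in that reference. Your Stage~3 verification of $\psi(t) = 1 - ((t-r)/r)^{p-1}$ is correct and complete: $\psi$ is tame with branch locus $\{1,\infty\}$, collapses $\{0,1\}$ to $0$, fixes $\infty$, and has $\psi^{-1}(1)=\{r\}$ with index exactly $p-1$, so the composite does what is claimed.

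The earlier stages are correct in outline but each rests on a nontrivial input that deserves a citation rather than a sketch. In Stage~1, ``tame is Zariski-open'' is true but not sufficient on its own: you also need non-emptiness, i.e., that a general pencil in a sufficiently positive linear system on $X$ has only simple ramification away from the base locus. This holds for $p>2$ (simple ramification has index $2$, which is prime to $p$), but it is a genuine Bertini-type statement, not an automatic consequence of openness. Stage~2 is exactly the positive-characteristic tame Belyi theorem due to Sa\"{\i}di; the reduction polynomials can indeed be taken to fix $\infty$, so the $\tau_i$ stay over $\infty$ throughout, but the iterative procedure that keeps all degrees prime to $p$ while shrinking the branch locus is delicate and should be invoked as a black box rather than summarized in a sentence. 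With those two references in place, your argument is complete.
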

	
	\begin{proof}
		This is \cite[Lemma 3.1]{kramermiller-padic}.
	\end{proof}

	\subsection{Basic setup} \label{subsection: basic setup}
	Write
	$\mathbb{P}^1_{\mathbb{F}_q}=\Proj(\mathbb{F}_q[x_1,x_2])$
	and let $\bar{t}=\frac{x_1}{x_2}$ be a parameter at $0$. 
	Fix a morphism $\gls{morphism to P1}$ as in Lemma \ref{lemma: map to p1}. 
	For $* \in \{0,1,\infty\}$ we let
	$\{P_{*,1}, \dots, P_{*,r_*}\} = \eta^{-1}(*)$ and set $\gls{W}= 
	\eta^{-1}(\{0,1,\infty\})$. Again, we will increase $q$ so that
	each $P_{*,i}$ is defined over $\mathbb{F}_q$. Fix $Q=P_{*,i} \in W$. We define $\gls{eQ}$ to be 
	the ramification
	index of $Q$ over $*$. From Lemma \ref{lemma: map to p1}, if $*=1$ we have 
	$e_{Q}=p-1$ 
	for 
	$1\leq i \leq r_1$, so that $r_1(p-1)=\deg(\eta)$. Also, by the Riemann-Hurwitz formula
	\begin{align}
	(g-1) + (r_0+r_1 + r_\infty) &= \deg(\eta)-g+1, \label{riemann-hurwitz eq}
	\end{align}
	where
	$g$ denotes the genus of $X$. Let $U=\mathbb{P}^1_{\mathbb{F}_q}-\{0,1,\infty \}$
	and $\gls{V}=X-W$. Then $\eta: V \to U$ is a finite \'etale map of degree $\deg(\eta)$. 
	Let $\overline{B}$ (resp. $\overline{A}$) be the $\mathbb{F}_q$-algebra
	such that $V=\Spec(\overline{B})$ (resp. $U=\Spec(\overline{A})$).
	
	Let $\mathbb{P}^1_{\mathcal{O}_L}$ be the projective line
	over $\Spec(\mathcal{O}_L)$
	and let $\mathbf{P}^1_{\mathcal{O}_L}$ be the formal projective
	line over $\text{Spf}(\mathcal{O}_L)$. Let $t$ be a global parameter
	of $\mathbf{P}^1_{\mathcal{O}_L}$ lifting $\bar{t}$. By the deformation theory of 
	tame
	coverings (see \cite[Theorem 4.3.2]{Grothendieck-Murre-tame_fundamental_groups}) 
	there exists a tame cover
	$\mathbf{X} \to \mathbf{P}^1_{\mathcal{O}_L}$ 
	whose special fiber is $\eta$ and by formal GAGA (see 
	\cite[\href{https://stacks.math.columbia.edu/tag/09ZT}{Tag 
		09ZT}]{stacks-project}) 
	there exists a morphism
	of smooth curves $\mathbb{X} \to \mathbb{P}^1_{\mathcal{O}_L}$
	whose formal completion is $\mathbf{X} \to 
	\mathbf{P}^1_{\mathcal{O}_L}$. 
	
	Define the functions
	$t_{0}=t$, $t_\infty = \frac{1}{t}$ and $t_1=t-1$. Let
	$[*]$ denote the $\mathcal{O}_L$-point of $\mathbb{P}^1_{\mathcal{O}_L}$
	given by $t_*=0$. For $Q =P_{*,i}$, let $[Q]$ be a point of $\eta^{-1}([*])$
	that reduces to $Q$ in the special fiber. Note that such a point exists
	since $Q \in \eta^{-1}(*)$, but it is not necessarily unique. 
	Let
	$\mathbb{U} = \mathbb{P}^1_{\mathcal{O}_L} - \{ [0], [1],[\infty] \}$
	and $\mathbb{V} = \mathbb{X}-\{[R]\}_{R\in W}$. We define $\mathbf{U} = 
	\mathbf{P}^1_{\mathcal{O}_L} - \{ 0,1,\infty \}$
	and $\gls{V formal} = \mathbf{X}-\{R\}_{R\in W}$. Then $\mathbf{U}$ (resp. 
	$\mathbf{V}$) is the formal completion of $\mathbb{U}$ (resp. $\mathbb{V}$). 
	We
	let $\mathcal{U}^{rig}$ (resp. $\gls{V rigid}$) be the rigid analytic fiber 
	of $\mathbf{U}$  (resp. $\mathbf{V}$). 
	Let $\widehat{A}$ (resp. $\widehat{\mathcal{A}}$) be the ring 
	of functions
	$\mathcal{O}_{\mathbf{U}}(\mathbf{U})$ (resp. $\mathcal{O}_{\mathcal{U}^{rig}}(\mathcal{U}^{rig})$) and let $\widehat{B}$
	(resp. $\widehat{\mathcal{B}}$) be the ring of 
	functions
	$\mathcal{O}_{\mathbf{V}}(\mathbf{V})$ (resp. 
	$\mathcal{O}_{\mathcal{V}^{rig}}(\mathcal{V}^{rig})$).

	\subsection{Local parameters and overconvergent rings}
	\label{subsection: expansion around local parameters}
	For $Q=P_{*,i}$, let $w_Q$ be a rational function on $\mathbb{X}$ that has a simple zero
	at $Q$. Let $\mathcal{E}_{*}$ (resp. $\mathcal{E}_{Q}$) be the Amice ring over $L$
	with parameter $t_{*}$ (resp. $w_{Q}$). By expanding functions 
	in terms of the $t_{*}$ and $w_{Q}$, we obtain the following diagrams:
	
	\begin{equation}\label{Local expansion commutative diagram}
	\begin{tikzcd}
	\widehat{B} \arrow[r]& 
	\bigoplus\limits_{Q \in W} \mathcal{O}_{\mathcal{E}_{Q}}  &
	\widehat{\mathcal{B}} \arrow[r] & \bigoplus\limits_{Q \in W} 
	\mathcal{E}_{Q} \\
	\widehat{A} \arrow[r]\arrow[u]& 
	\bigoplus\limits_{*\in\{0,1,\infty\}} \mathcal{O}_{\mathcal{E}_{*}} \arrow[u] 
	&
	\widehat{\mathcal{A}} \arrow[r]\arrow[u] & \bigoplus\limits_{*\in\{0,1,\infty\}} 
	\mathcal{E}_{*}\arrow[u].
	\end{tikzcd}
	\end{equation}
	We let $A^\dagger$ (resp. 
	$\gls{integral overconvergent on V}$) be the subring of
	$\widehat{A}$ (resp. $\widehat{B}$)
	consisting of functions that are overconvergent
	in the tube $]*[$ for each $*\in\{0,1,\infty\}$ (resp. $]Q[$ for all $Q \in W$).
	In particular, $B^\dagger$ fits into the following
	Cartesian diagram:
	\begin{equation}  \label{overconvergent expansion diagram}
	\begin{tikzcd}
	B^\dagger \arrow[d]\arrow[r] & \bigoplus\limits_{Q \in W} \arrow[d] 
	\mathcal{O}_{\mathcal{E}_{Q}}^\dagger \\
	\widehat{B}\arrow[r] & \bigoplus\limits_{Q \in W} \mathcal{O}_{\mathcal{E}_{Q}} 
	.
	\end{tikzcd}
	\end{equation}
	Note that $A^\dagger$ (resp. $B^\dagger$) is the weak completion of $A$ (resp. 
	$B$) 
	in the sense of \cite[\S 2]{Monsky-Washnitzer-formal_cohomology1}. In particular,
	we have $A^\dagger =\mathcal{O}_L \Big < t,t^{-1}, \frac{1}{t-1} \Big>^\dagger$
	and $B^\dagger$ is a finite \'etale $A^\dagger$-algebra.
	Finally, we define $\mathcal{A}^\dagger$ (resp. $\mathcal{B}^\dagger$)
	to be $A^\dagger \otimes \Q_p$ (resp. $B^\dagger \otimes \Q_p$). 
	Then $\mathcal{A}^\dagger$ (resp. $\gls{overconvergent on V}$) is
	equal to the functions in $\widehat{\mathcal{A}}$ (resp. $\widehat{\mathcal{B}}$) 
	that are overconvergent 
	in the tube $]*[$ for each $*\in\{0,1,\infty\}$ (resp. $]R[$ for all $R \in W$).

	The extension $\mathcal{E}_Q^\dagger/\mathcal{E}_*^\dagger$ is an unramified
	extension of local fields and thus completely
	determined by the residual extension. By our assumption on the tameness of $\eta$, we know that this residual extension is tame and can be written as $\mathbb{F}_q((t_*^{\frac{1}{e_Q}}))/\mathbb{F}_q((t_*))$. 
	Since $\mathcal{O}_{\mathcal{E}_Q^\dagger}$ is Henselian (see \cite[Proposition 3.2]{Matsuda-local_indices}) there exists a parameter $u_Q$ of $\mathcal{E}_Q^{\dagger}$
	such that $u_Q^{e_Q}=t_*$. We remark that $u_Q$ will
	be defined on an annulus inside the disc $]Q[$ and in general
	it will not extend to a function on the whole disc.
	
	We will need to consider functions in $\mathcal{B}^\dagger$ with a precise radius of overconvergence
	in terms of the parameters $u_Q$.
	Let $\gls{radius tuple}=(r_Q)_{Q \in W}$ be a tuple of positive rational numbers. We define $\gls{r converge on V}$ to be the subring of functions in $\mathcal{B}^\dagger$ that overconverge
	in the annulus $0<v_p(u_Q)\leq r_Q$.
	\footnote{Our definition of $\mathcal{B}(0,\mathbf{r}]$ is somewhat
	nonstandard. Typically one measures `overconvergence' with global functions. However, if the $r_Q$ are small enough
	the spaces of functions we obtain are essentially the same.
	For example, consider the modular curve case (i.e. $X=X_0(N)$ and $V$ is the ordinary locus). One typically looks at affinoid spaces $\mathcal{X}_0(N)^{r'}$ of the form $0\leq v_p(E_{p-1})< r'$, where $E_{p-1}$ is
	the weight $p-1$ Eisenstein series (i.e. a lift of the Hasse invariant). If $r'$ is sufficiently small we
	have $\mathcal{O}_{\mathcal{X}_0(N)}(\mathcal{X}_0(N)^{r'})=\mathcal{B}(0,\mathbf{r}']$, where $\mathbf{r}'$ is the tuple with $r'$
	in each coordinate. This follows from the following two facts. First, note that $E_{p-1}$ is a parameter of $\mathcal{E}_Q^\dagger$ for each supersingular point $Q \in X$, since the Hasse invariant has simple zeros at supersingular points.
	Second, if $u \in \mathcal{E}^\dagger_t$ is a parameter and $u \in\mathcal{E}_t(0,r]$, then we have
	$\mathcal{E}_u(0,r_0]=\mathcal{E}_t(0,r_0]$ for any $r_0<r$.}
	More precisely, $\mathcal{B}(0,\mathbf{r}]$
	fits into the following Cartesian diagram
	\begin{equation} \label{r-convergent expansion diagram}
	\begin{tikzcd}
	\mathcal{B}(0,\mathbf{r}] \arrow[d]\arrow[r] & \bigoplus\limits_{Q \in W} \arrow[d] 
	\mathcal{E}_{Q}(0,r_Q] \\
	\widehat{\mathcal{B}}\arrow[r] & \bigoplus\limits_{Q \in W} \mathcal{E}_{Q}
	\end{tikzcd}.
	\end{equation}
	Note that $\mathcal{B}^\dagger$ is the union over all $\mathcal{B}(0,\mathbf{r}]$.

	\subsection{Global Frobenius and $U_p$ operators} \label{subsection Global Frobenius and Up}
	Let $\nu:\mathcal{A}^\dagger
	\to \mathcal{A}^\dagger$ be the $p$-Frobenius endomorphism that restricts to $\nu$ on $L$ 
	and sends $t$ to $t^p$.
	Let $\sigma=\nu^a$. For $* \in \{0,1,\infty\}$,
	we may extend
	$\nu$ to a $p$-Frobenius endomorphism of $\mathcal{E}_{*}^\dagger$,
	which we refer to as $\nu_{*}$. 
	In terms of the parameters $t_*$, these endomorphisms are given
	as follows:
	\[ t_0 \mapsto t_0^p,~~~t_\infty \mapsto t_\infty^p, \text{ and}~~~t_1\mapsto 
	(t_1+1)^p-1.\]
	
	Since the map $\widehat{A} \to
	\widehat{B}$ is \'etale and both rings are $p$-adically
	complete, we may extend both
	$\sigma$ and $\nu$ to maps 
	$\sigma,\nu:\widehat{B}\to\widehat{B}$. 
	This extends to a $p$-Frobenius endomorphisms $\nu_{Q}$ of
	$\mathcal{E}_{Q}$, which make the diagrams in \eqref{Local expansion 
		commutative diagram} $p$-Frobenius equivariant. Furthermore, since 
	$\nu_{Q}$ 
	extends
	$\nu_{*}$, we know that $\nu_{Q}$ induces 
	a $p$-Frobenius endomorphism of $\mathcal{E}_{Q}^\dagger$. 	It follows from \eqref{overconvergent expansion diagram}
	that $\sigma$ and $\nu$ restrict to
	endomorphisms $\sigma, \nu: \mathcal{B}^\dagger \to \mathcal{B}^\dagger$.
	The $p$-Frobenius endomorphisms 
	$\nu_{Q}$ can be described as follows:
	\begin{enumerate}
		\item When $*$ is $0$ or $\infty$, have $u_{Q}^{\nu_Q}=u_{Q}^p$, since
		$t_*^{\nu_*}=t_*^p$ and $u_{Q}^{e_Q}=t_*$. 
		\item When $*=1$, we have $u_{Q}^{\nu_Q} = \sqrt[p-1]{(u_Q^{p-1}+1)^p-1}$,
		since $t_1^{\nu_1}= (t_1+1)^p-1$ and $u_Q^{p-1}=t_1$. 
	\end{enumerate}
	Following \cite[\S 3]{van_der_Put-MW_cohomology}, there is a trace map
	$Tr_0: \mathcal{B}^\dagger \to \nu(\mathcal{B}^\dagger)$ (resp. $Tr: 
	\mathcal{B}^\dagger \to \sigma(\mathcal{B}^\dagger)$).
	We may define the $U_p$ operator on 
	$\mathcal{B}^\dagger$:
	\begin{align*}
	U_p: \mathcal{B}^\dagger &\to \mathcal{B}^\dagger \\
	x &\mapsto \frac{1}{p} \nu^{-1}(Tr_0(x)).
	\end{align*}
	Similarly, we define $U_q=\frac{1}{q}\sigma^{-1}\circ Tr$, so that
	$U_p^a=U_q$. Note that $U_p$ is $E$-linear and $U_q$ is $L$-linear.
	Both $U_p$ and $U_q$ extend to operators on $\mathcal{E}_Q^\dagger$.

	\section{Local $U_p$ operators}
	\label{section: local Up operators}
	Let $\nu$ be a $p$-Frobenius endomorphism of $\mathcal{E}^\dagger$ (see \S \ref{subsection: Frobenius endomorphisms}). We define
	$U_p$ to be the map:
	\begin{align*}
	\frac{1}{p}\nu^{-1}\circ \text{Tr}_{\mathcal{E}^\dagger / 
		\nu(\mathcal{E}^\dagger)}: \mathcal{E}^\dagger \to \mathcal{E}^\dagger. 
	\end{align*}
	Note that $U_p$ is $\nu^{-1}$-semi-linear (i.e. $U_p(y^\nu x)=yU_p(x)$ for 
	all $y\in 
	\mathcal{E}^\dagger$).
	In this section we will study $U_p$ for the $p$-Frobenius
	endomorphisms of $\mathcal{E}^\dagger$ appearing \S \ref{subsection Global Frobenius and Up}. 
		\subsection{Type 1: $t \mapsto t^p$}

First consider the $p$-Frobenius endomorphism $\nu:\mathcal{E}^\dagger \to \mathcal{E}^\dagger$ 
sending $t$ to $t^p$. We see that $U_p(t^i)=0$ if $p\nmid i$ and $U_p(t^i)=t^{\frac{i}{p}}$
if $p \mid i$.
Thus, for $s> 0$ we have:
\begin{align}
U_p(\mathcal{O}_\mathcal{E}^s) \subset \mathcal{O}_\mathcal{E}^{\frac{s}{p}}, ~~\text{ and } ~~
U_p(\mathcal{O}_L\llbracket  \pi_s t^{-1} \rrbracket)\subset \mathcal{O}_L\llbracket  \pi_s^p t^{-1} \rrbracket  . 
\label{basic p-frob 
	growth}
\end{align}
	\subsubsection{Local estimates} \label{subsubsection: type one frob local estimates}
	Let $R=(s,\mathbf{e},\epsilon,\omega)$ be a ramification datum and let $e_0,\dots,e_{a-1}$ be the $p$-adic digits of $\epsilon$
	as in \S \ref{subsection: statement of main results}.
	For
	$j=0,\dots,a-1$ we define
	\begin{align*}
	q(\mathbf{e},j) &= - \sum_{i=0}^{a-1} (i+1) e_{i+j}.
	\end{align*}
	Note that
	\begin{align}
	q(\mathbf{e},j) - q(\mathbf{e},j+1) &= ae_j - \omega . \label{eq: differences between 
		the qs}
	\end{align}
	
	\noindent Let $t_i^n \in  \bigoplus\limits_{j=0}^{a-1} \mathcal{E}^\dagger$ 
	denote the element that has $t^n$ in the $i$-th coordinate and zero in the other coordinates.
	We then define the spaces
	\begin{align*}
		\mathcal{D}_{\mathbf{e},s}^{(j)}&= 
		\pi_{as}^{q(\mathbf{e},j)} \pi_s^pt_j^{-1} \mathcal{O}_L\llbracket  \pi_s^pt_j^{-1} \rrbracket \oplus \mathcal{O}_L \llbracket  t_j \rrbracket, \\
		\gls{special local for type 1} &= \bigoplus_{j=0}^{a-1} \mathcal{D}_{\mathbf{e},s}^{(j)} 
		\subset \bigoplus_{j=0}^{a-1} \mathcal{E}^\dagger.
	\end{align*}
	We know $-q(\mathbf{e},i) \leq a(p-1)$, which implies $\pi_{as}^{q(\mathbf{e},j)} \pi_s^p \in \mathcal{O}_L$. In particular, 
	\begin{align*}
		\mathcal{D}_{\mathbf{e},s} &\subset \bigoplus_{j=0}^{a-1} \mathcal{O}_{\mathcal{E}^\dagger}.
	\end{align*}
	\begin{proposition} \label{proposition: type 1 Up estimates}
		Let $\nu$ be the $p$-Frobenius endomorphism that
		sends $t \mapsto t^p$. 
		Let $\alpha \in \mathcal{O}_L\llbracket  \pi_s t^{-1} \rrbracket$
		and set $A=\tcyc{\alpha t^{-e_0}, \dots,\alpha  t^{-e_{a-1}}}$. 
		Then:
		\begin{align}
		U_p \circ A(\mathcal{D}_{\mathbf{e},s}) &\subset \mathcal{D}_{\mathbf{e},s}, \label{proposition: type 1 estimate eq 1}\\
		U_p \circ A(\pi_{as}^{q(\mathbf{e},j)} \pi_s^{np} t_j^{-n}) &\subset \pi_s^{n(p-1)} \pi_{as}^{-\omega}\mathcal{D}_{\mathbf{e},s}, \label{proposition: type 1 estimate eq 2}
		\end{align}
		 for $n\geq 1$ and $0\leq j \leq a-1$.
	\end{proposition}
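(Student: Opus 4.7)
The plan is to reduce both parts of the proposition to an explicit scalar computation on a single copy of $\mathcal{E}^\dagger$. Since the only nonzero entry in column $j$ of $A$ is $\alpha t^{-e_j}$ placed in row $j+1$ (indices read modulo $a$), and since $U_p$ acts coordinatewise on $\bigoplus_{j=0}^{a-1}\mathcal{E}^\dagger$, it suffices to verify for each $j$ that $U_p(\alpha t^{-e_j}\cdot x)$ lies in the appropriate subspace of the $(j+1)$-th summand for each generator $x$ of $\mathcal{D}_{\mathbf{e},s}^{(j)}$. Expanding $\alpha = \sum_{k\geq 0}a_k\pi_s^k t^{-k}$ with $a_k\in\mathcal{O}_L$, and using $U_p(t^i) = t^{i/p}$ when $p\mid i$ and $0$ otherwise, one obtains the master formula
$$
U_p(\alpha t^{n-e_j}) \;=\; \sum_{m}a_{n-e_j+pm}\,\pi_s^{n-e_j+pm}\,t^{-m},
$$
where $m$ ranges over integers with $n-e_j+pm\geq 0$ (otherwise the coefficient vanishes).

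For claim (2), substitute $-n$ for $n$ in the master formula and multiply by $\pi_{as}^{q(\mathbf{e},j)}\pi_s^{pn}$. Since $n\geq 1$, the constraint forces $m\geq 1$, so only negative powers of $t$ appear. The relation (\ref{eq: differences between the qs}) combined with $\pi_{as}^{ae_j}=\pi_s^{e_j}$ yields
$$
\pi_{as}^{q(\mathbf{e},j)} \;=\; \pi_s^{e_j}\,\pi_{as}^{-\omega}\,\pi_{as}^{q(\mathbf{e},j+1)},
$$
and substituting this identity rewrites each summand as an $\mathcal{O}_L$-multiple of $\pi_s^{n(p-1)}\pi_{as}^{-\omega}\cdot \pi_{as}^{q(\mathbf{e},j+1)}\pi_s^{pm}t^{-m}$, which is a generator of $\pi_s^{n(p-1)}\pi_{as}^{-\omega}\mathcal{D}_{\mathbf{e},s}^{(j+1)}$. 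This proves (2).

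Claim (1) splits into two cases. The negative-part generators of $\mathcal{D}_{\mathbf{e},s}^{(j)}$ are exactly the inputs to (2), so the output lies in $\pi_s^{n(p-1)}\pi_{as}^{-\omega}\mathcal{D}_{\mathbf{e},s}^{(j+1)}\subset \mathcal{D}_{\mathbf{e},s}^{(j+1)}$, where the containment follows from $v_p(\pi_s^{n(p-1)}\pi_{as}^{-\omega}) = (an(p-1)-\omega)/(as)\geq 0$ using the elementary bound $\omega\leq a(p-1)$. For the positive-part generators $t^n$ with $n\geq 0$, apply the master formula directly: terms with $m\leq 0$ contribute $a_k\pi_s^k$ times a non-negative power of $t$ and so lie in $\mathcal{O}_L\llbracket t\rrbracket$; terms with $m\geq 1$ land in the negative part of $\mathcal{D}_{\mathbf{e},s}^{(j+1)}$ provided $a(n-e_j)\geq q(\mathbf{e},j+1)$.

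The main obstacle is verifying this last inequality in the case $n<e_j$, where both sides are negative. The key input is the bound $q(\mathbf{e},j+1)\leq -ae_j$, obtained by isolating the $i=a-1$ summand in $q(\mathbf{e},j+1)=-\sum_{i=0}^{a-1}(i+1)e_{i+j+1}$ and using the wrap-around convention $e_{a+j}=e_j$ together with the nonnegativity of each $e_i$. Combined with $n\geq 0$, this gives $a(n-e_j)\geq -ae_j\geq q(\mathbf{e},j+1)$, completing the proof. The rest of the argument is careful bookkeeping of the two valuation normalizations $\pi_s$ and $\pi_{as}$ and the cyclic shift $j\mapsto j+1$ captured by (\ref{eq: differences between the qs}).
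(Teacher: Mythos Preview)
Your proof is correct and follows essentially the same approach as the paper: compute $A(t_j^{-n})=\alpha t_{j+1}^{-n-e_j}$, use the identity \eqref{eq: differences between the qs} to rewrite $\pi_{as}^{q(\mathbf{e},j)}$ as $\pi_{as}^{q(\mathbf{e},j+1)}\pi_s^{e_j}\pi_{as}^{-\omega}$, and then apply $U_p$. The paper packages the last step via the containment \eqref{basic p-frob growth} rather than your explicit term-by-term expansion of $\alpha$, and leaves the positive-part case of \eqref{proposition: type 1 estimate eq 1} to the reader; your bound $q(\mathbf{e},j+1)\leq -ae_j$, obtained by isolating the $i=a-1$ summand, is exactly the detail needed there.
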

	
	\begin{proof}
		For $n\geq 1$ we have $A(t_j^{-n})=\alpha t_{j+1}^{-n-e_{j+1}}$. Then from \eqref{eq: differences between the qs} we have:
		\begin{align*}
			A(\pi_{as}^{q(\mathbf{e},j)}\pi_s^{pn} t_j^{-n}) &=
			\pi_{as}^{q(\mathbf{e},j)} \pi_s^{pn} t_{j+1}^{-n-e_{j+1}} \alpha \\
			&= \pi_{as}^{q(\mathbf{e},j+1)} \pi_s^{n(p-1)}\pi_{as}^{-\omega} \cdot (\pi_s^{n + e_{j+1}}t_{j+1}^{-n-e_{j+1}}\alpha).
		\end{align*}
		Note that $\pi_s^{n + e_{j+1}}t_{j+1}^{-n-e_{j+1}}\alpha \in \mathcal{O}_L\llbracket  \pi_s t_{j+1}^{-1} \rrbracket$. Then \eqref{proposition: type 1 estimate eq 2} follows from \eqref{basic p-frob growth}.
		To prove \eqref{proposition: type 1 estimate eq 1}, we need to make sure
		$U_p \circ A(t_j^n) \in \mathcal{D}_{\mathbf{e},s}$ for $n\geq 0$, which
		can be done by a similar argument.
	\end{proof}
	
	\subsection{Type 2: $t \mapsto \sqrt[p-1]{(t^{p-1}+1)^p-1}$}
	\label{subsection: type 2 frobenius}
	Next, consider the $p$-Frobenius endomorphism $\nu:\mathcal{E}^\dagger \to \mathcal{E}^\dagger$ that
	sends
	$t$ to $\sqrt[p-1]{(t^{p-1}+1)^p-1}$.
	Define the following sequence of numbers
	\begin{align*}
	b(n) & =\begin{cases}
		\Big \lfloor \frac{-n-1}{p-1} \Big \rfloor & n\leq -1 \\
		0 & n \geq 0
	\end{cases}. 
	\end{align*}
	We then define the space
	\begin{align}\label{definition of A}
		\gls{special local for type 2} &= \prod_{n \in \Z} p^{b(n)}t^{n}\mathcal{O}_L,
	\end{align}
	which we regard as a sub-$\mathcal{O}_L$-module of $\mathcal{O}_{\mathcal{E}^\dagger}$.
	\begin{proposition} \label{proposition: type 2 Frobenius}
		Let $\nu$ be the $p$-Frobenius endomorphism of $\mathcal{E}^\dagger$ that sends
		$t$ to $\sqrt[p-1]{(t^{p-1}+1)^p-1}$. For all $n\in \Z_{\geq 0}$ and $0\leq k \leq p-1$, we have
		\begin{align*}
		U_p(p^{b(-k-np)}t^{-k-np}) &\in p^n \mathcal{D}, \\
		U_p(\mathcal{D}) &\subset \mathcal{D}. 
		\end{align*}
	\end{proposition}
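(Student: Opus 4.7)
The plan is to deduce both estimates from an explicit expansion of the unit $w := \nu(t)/t^p$, combined with the $\nu^{-1}$-semilinearity of $U_p$ and a base-case computation for $U_p$ on the monomials $t^{-k}$ with $0 \leq k \leq p-1$.

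First I would make $\nu$ concrete. Setting $z = t^{p-1}$, we have
\begin{align*}
(\nu(t)/t^p)^{p-1} = \frac{(z+1)^p - 1}{z^p} = 1 + pP(z^{-1}),
\end{align*}
where $P(z^{-1}) = \sum_{j=1}^{p-1} \frac{1}{p}\binom{p}{j}\, z^{-j}$ is a polynomial whose coefficients are units in $\mathcal{O}_L$. Since $p \nmid p-1$, Hensel's lemma produces a unique $(p-1)$-th root $w = (1+pP)^{1/(p-1)} \in \mathcal{O}_{\mathcal{E}^\dagger}$ with $w \equiv 1 \pmod{p}$, and then $\nu(t) = t^p w$; moreover $w \in \mathcal{E}_z^\dagger$. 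For any $n \geq 0$, the exponent $n/(p-1)$ lies in $\Z_p$, so
\begin{align*}
w^n = (1+pP)^{n/(p-1)} = \sum_{k \geq 0} \binom{n/(p-1)}{k} p^k P^k
\end{align*}
converges in $\mathcal{O}_L \llbracket z^{-1} \rrbracket$. Writing $w^n = \sum_{j \geq 0} W_{n,j}\, z^{-j}$ and noting that $z^{-j}$ can only appear in $P^k$ when $k \geq \lceil j/(p-1)\rceil$, one reads off the key bound $v_p(W_{n,j}) \geq \lceil j/(p-1)\rceil$.

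Next I would exploit the semilinearity $U_p(\nu(a)\, b) = a\, U_p(b)$. The identity $t^{-k-np} = \nu(t^{-n}) \cdot t^{-k} w^n$ yields
\begin{align*}
U_p(t^{-k-np}) = t^{-n}\, U_p(t^{-k} w^n) = t^{-n} \sum_{j \geq 0} W_{n,j}\, U_p(t^{-k-j(p-1)}).
\end{align*}
This reduces the first claim to the base case $n=0$: showing $U_p(t^{-k}) \in \mathcal{D}$ for $0 \leq k \leq p-1$. I would verify the base case via the standard decomposition $\mathcal{E}^\dagger = \bigoplus_{i=0}^{p-1} \nu(\mathcal{E}^\dagger)\cdot t^i$ together with the relation $t^{-p} = \nu(t^{-1})\cdot w$, iterating to rewrite each $t^{-k}$ in this basis and reading off $U_p(t^{-k})$ explicitly. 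Given the base case, the general $n$ case follows by aggregating the bounds $v_p(W_{n,j}) \geq \lceil j/(p-1)\rceil$ with the base-case estimates for $U_p(t^{-k-j(p-1)})$: the $p$-adic valuations combine so that, after multiplication by $t^{-n}$, one recovers the first displayed estimate $U_p(p^{b(-k-np)}\, t^{-k-np}) \in p^n \mathcal{D}$.

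The inclusion $U_p(\mathcal{D}) \subseteq \mathcal{D}$ then follows by splitting $x \in \mathcal{D}$ as $x^+ + x^-$, with $x^+ \in \mathcal{O}_L\llbracket t\rrbracket$ and $x^- = \sum_{N \geq 1} a_{-N}\, t^{-N}$ satisfying $v_p(a_{-N}) \geq b(-N)$. Standard integrality of $U_p$ gives $U_p(x^+) \in \mathcal{O}_L\llbracket t\rrbracket \subseteq \mathcal{D}$, while the first estimate applied term-by-term yields $a_{-N}\, U_p(t^{-N}) \in p^{\lfloor N/p \rfloor}\mathcal{D}$; these summands tend $p$-adically to $0$ as $N \to \infty$ and accumulate to an element of $\mathcal{D}$, which is $p$-adically closed. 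The main obstacle will be the base case $U_p(t^{-k}) \in \mathcal{D}$: one must unpack $\text{Tr}_{\mathcal{E}^\dagger/\nu(\mathcal{E}^\dagger)}(t^{-k})$, then divide by $p$ and apply $\nu^{-1}$, tracking $p$-adic valuations carefully to confirm the resulting coefficients satisfy $v_p(c_m) \geq b(m)$.
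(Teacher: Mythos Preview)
The paper does not actually prove this proposition; it simply cites \cite[Proposition 4.4]{kramermiller-padic}. So there is no in-paper argument to compare against, and your outline must be judged on its own merits.

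Your overall strategy---writing $\nu(t)=t^p w$ with $w^{p-1}=1+pP(z^{-1})$, extracting the growth bound $v_p(W_{n,j})\geq\lceil j/(p-1)\rceil$, and exploiting $\nu^{-1}$-semilinearity via $t^{-k-np}=\nu(t^{-n})\,t^{-k}w^n$---is sound and is indeed the natural way to unpack this Frobenius. The derivation of the second claim from the first is also correct.

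However, there is a genuine gap in your reduction of the first claim to the ``base case''. Your formula
\[
U_p(t^{-k-np})=t^{-n}\sum_{j\geq 0}\nu^{-1}(W_{n,j})\,U_p(t^{-k-j(p-1)})
\]
expresses $U_p(t^{-k-np})$ in terms of $U_p(t^{-k-j(p-1)})$ for \emph{all} $j\geq 0$, and for $j\geq 1$ the exponent $k+j(p-1)$ is no longer in the range $0\leq k\leq p-1$. These are not base-case terms. If you try to close this by strong induction on $N=k+np$, the induction fails: for $j$ large enough that $k+j(p-1)>N$ the inductive hypothesis is unavailable, and the bound $v_p(W_{n,j})\geq\lceil j/(p-1)\rceil$ is too weak to compensate. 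Concretely, even granting the second claim, one only gets $U_p(t^{-k-j(p-1)})\in p^{-b(-k-j(p-1))}\mathcal{D}=p^{-j}\mathcal{D}$ (for $k\geq 1$), so the $j$-th summand lies merely in $p^{\lceil j/(p-1)\rceil-j}\mathcal{D}$, whose $p$-valuation goes to $-\infty$; the subsequent multiplication by $t^{-n}$ does not rescue this.

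To make the argument go through you must either (i) carry out the full recursive expansion of $t^{-N}$ in the basis $\{t^i\}_{i=0}^{p-1}$ over $\nu(\mathcal{E}^\dagger)$ and control the resulting infinite iteration uniformly (not just the first step), or (ii) pass to the parameter $z_1=t^{p-1}+1$, in which $\nu$ becomes the standard $z_1\mapsto z_1^p$, and reinterpret $\mathcal{D}$ in those coordinates so that the $U_p$-estimates become the elementary ones for $t\mapsto t^p$. Either route works, but your outline as written stops short of the crucial bookkeeping that makes the first estimate hold.
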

	\begin{proof}
		See \cite[Proposition 4.4]{kramermiller-padic}. 
	\end{proof}

	\section{Unit-root $F$-crystals}
	\label{section: Local unit-root $F$-isocrystals}	
	
	\subsection{$F$-crystals and $p$-adic representations}
	\label{subsection: local rank one crystals}
	For this subsection, we let $\overline{S}$ be either $\Spec(\mathbb{F}_q((t)))$ or a smooth,
	irreducible affine $\mathbb{F}_q$-scheme $\Spec(\overline{R})$. We let $S=\Spec(R)$
	be a flat $\mathcal{O}_L$-scheme whose special fiber is $\overline{S}$ and assume that $R$ is $p$-adically complete (e.g, if $\overline{S}=\Spec(\mathbb{F}_q((t)))$ then we may take
	$R=\mathcal{O}_\mathcal{E}$).
	Fix a $p$-Frobenius endomorphism $\nu$ on $R$ (as in \S \ref{subsection: Frobenius endomorphisms}). Then $\sigma=\nu^a$
	is a $q$-Frobenius endomorphism. 
	\begin{definition}
		A $\varphi$-module for $\sigma$ over $R$ is a locally
		free $R$-module $M$
		equipped with a $\sigma$-semilinear endomorphism $\varphi: M \to M$. 
		That is,
		we have $\varphi(cm)=\sigma(c)\varphi(m)$ for $c\in R$.  
	\end{definition}	
	\begin{definition} \label{varphi module definition}
		A unit-root $F$-crystal $M$ over $\overline{S}$ is
		a $\varphi$-module such that $\sigma^* \varphi: R \otimes_{\sigma} M \to M$
		is an isomorphism. The rank of $M$ is defined as the rank of the underlying
		$R$-module. 
	\end{definition}

	\begin{theorem}[Katz, see \S 4 in \cite{Katz-p-adic_properties}] 
		\label{mod p riemann-hurwitz}
		There is an equivalence of categories
		\begin{align*}
		\{\text{rank $d$ unit-root $F$-crystals over $\overline{S}$}\} 
		&\longleftrightarrow \{\text{continuous representations} ~ \psi:\pi^{et}_1(\overline{S}) \to GL_d(\mathcal{O}_L)\}.
		\end{align*}
	\end{theorem}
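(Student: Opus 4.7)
The plan is to produce explicit quasi-inverse functors in both directions, using as input the equivalence between finite étale covers of $\overline{S}$ and finite continuous $\pi_1^{et}(\overline{S})$-sets, together with the topological invariance of the étale site (any finite étale cover of $\overline{S}$ lifts uniquely to a finite étale cover of $S$, and this lift inherits a compatible Frobenius endomorphism from $\nu$).

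For the direction from representations to $F$-crystals, suppose we are given a continuous $\psi:\pi_1^{et}(\overline{S}) \to GL_d(\mathcal{O}_L)$. For each $n\geq 1$, continuity of $\psi$ implies that $\psi_n := \psi \bmod \pi_\circ^n$ has open kernel, and thus factors through $\text{Gal}(\overline{S}_n/\overline{S})$ for some finite Galois étale cover $\overline{S}_n \to \overline{S}$. Lift this uniquely to a finite étale cover $S_n \to \Spec(R/\pi_\circ^n)$. The trivial rank $d$ module $(R/\pi_\circ^n)^d$ on $S_n$ carries the standard unit-root Frobenius structure, and one descends it to $\Spec(R/\pi_\circ^n)$ by twisting the Galois descent data by $\psi_n$. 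Taking the inverse limit in $n$ produces a locally free $R$-module with $\sigma$-semilinear $\varphi$; unit-rootness is preserved by the construction since the descended object is étale-locally trivial.

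For the reverse direction, given a unit-root $F$-crystal $(M,\varphi)$ of rank $d$ over $\overline{S}$, lift it to a $\varphi$-module over $R$ (this is possible because $R$ is $p$-adically complete and the underlying module is locally free). To construct the representation, the key step is a trivialization claim: there exists a pro-finite-étale cover $\widetilde{R}$ of $R$, with its natural Frobenius, such that $T(M) := (M \otimes_R \widetilde{R})^{\varphi=1}$ is a free $\mathcal{O}_L$-module of rank $d$. Once this is established, $\pi_1^{et}(\overline{S})$ acts on $T(M)$ continuously through its action on $\widetilde{R}$, which is the desired $\psi$. Full faithfulness, essential surjectivity, and naturality follow by routine verification once one has the trivialization statement.

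The principal obstacle is the trivialization claim. One proceeds by successive approximation modulo $\pi_\circ^n$. At level $n=1$, unit-rootness says the reduction $\overline{\varphi}: F^*\overline{M} \to \overline{M}$ is an isomorphism, so the locus $\{v \in \overline{M} : \overline{\varphi}(v)=v\}$ inside the total space of $\overline{M}$ is an étale torsor for $GL_d(\mathbb{F}_q)$ over $\overline{S}$, i.e.\ an Artin–Schreier type finite étale cover on which $\overline{M}$ trivializes. To lift from $\pi_\circ^n$ to $\pi_\circ^{n+1}$, one solves $\varphi(v) - v = w$ with $w$ a prescribed element of $\pi_\circ^n M / \pi_\circ^{n+1} M$; because $\overline{\varphi} - \text{id}$ is surjective on the étale cover just constructed (as an Artin–Schreier–Witt style equation), Hensel's lemma provides the needed solutions after passing to a further finite étale cover. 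Assembling these covers into a compatible tower yields $\widetilde{R}$, and tracking the Galois descent at each level gives the continuous $\pi_1^{et}(\overline{S})$-action on $T(M)$.
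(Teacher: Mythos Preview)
The paper does not supply its own proof of this theorem; it is stated with a citation to Katz (\S 4 of \cite{Katz-p-adic_properties}) and then the text only spells out explicitly the special case where $\psi$ factors through a single finite Galois cover with values in $GL_d(\mathcal{O}_E)$. So there is no in-paper argument to compare against. Your sketch is essentially the standard Katz argument: build the crystal from a representation by Galois descent of the trivial $\varphi$-module over successive finite \'etale covers, and in the other direction trivialize a unit-root crystal by successive approximation via Lang torsors (the Artin--Schreier--Witt picture). That is the correct strategy and matches what the cited reference does.

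One small point of friction with the paper's conventions: in your reverse direction you say ``lift it to a $\varphi$-module over $R$,'' but in this paper a unit-root $F$-crystal over $\overline{S}$ is \emph{by definition} already a $\varphi$-module over $R$ (see the definitions immediately preceding the theorem), so there is no lifting step. Also, at level $n=1$ be careful that $\sigma$ here is the $q$-Frobenius, so the fixed-point torsor is for $GL_d(\mathbb{F}_q)$ rather than $GL_d(\mathbb{F}_p)$; your statement is consistent with this, but it is worth making explicit since the paper distinguishes $\nu$ and $\sigma$ throughout.
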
	

	 Let us describe a certain case of this correspondence. Let
	 $\overline{S}_1 \to \overline{S}$ be a finite \'etale cover and assume that
	 $\psi$ comes from a map $\psi_0:Gal(\overline{S}_1/\overline{S})\to GL_d(\mathcal{O}_E)$.
	 This cover deforms into a finite \'etale map of affine schemes
	$S_1=\Spec(R_1) \to S$. Both $\nu$ and $\sigma$ extend to $R_1$ and commute with
	the action of $Gal(\overline{S}_1/\overline{S})$ (see e.g. \cite[\S 2.6]{Tsuzuki-finite_local_monodromy}). Let 
	$V_0$ be a free $\mathcal{O}_E$-module of rank $d$ on which $Gal(\overline{S}_1/\overline{S})$ acts on via $\psi_0$ and let $V=V_0 \otimes_{\mathcal{O}_E}\mathcal{O}_L$.  The unit-root $F$-crystal associated to $\psi$ is $M_\psi=(R_1 \otimes_{\mathcal{O}_L} V)^{Gal(\overline{S}_1/\overline{S})}$ with $\varphi=\sigma\otimes_{\mathcal{O}_L} id$. 
	There is a map
	\[ (S_1 \otimes_{\mathcal{O}_{E}} V_0) \to (S_1 
	\otimes_{\mathcal{O}_L} V),\]
	which is Galois equivariant. In particular, the map $\varphi$ has an $a$-th root $\varphi_0=\nu \otimes_{\mathcal{O}_{E}} id$.
	
	Now make the additional assumption that $M_\psi$ is free as an $R$-module.
	Let $e_1,\dots, e_d$ be a basis of $M_\psi$ as an $R$-module and let
	$\mathbf{e}=[e_1,\dots,e_d]$. Then $\varphi(\mathbf{e})=\alpha \mathbf{e}$ (resp. $\varphi_0(\mathbf{e})=\alpha_0\mathbf{e}$), where $\alpha,\alpha_0  \in GL_d(R)$. 
	We refer to the matrix $\alpha$ (resp. $\alpha_0$) as a \emph{Frobenius structure}
	(resp. $p$-\emph{Frobenius structure}) of $M$ and 
	to the matrix $\alpha^T$ (resp. $\alpha_0^T$) as a \emph{dual Frobenius structure} 
	(resp. \emph{dual }$p$\emph{-Frobenius structure}) of $M$. We have the relation 
	$\alpha^T=(\alpha_0^T)^{\nu^{a-1}+ \dots + \nu + 1}$ (recall from \S \ref{subsection: Frobenius endomorphisms}
	that $(\alpha_0^T)^{\nu^{a-1}+ \dots + \nu + 1} = (\alpha_0^T)^{\nu^{a-1}} \dots (\alpha_0^T)^\nu \alpha_0^T$).
	If $\mathbf{e}'=b\mathbf{e}$ and
	$\varphi(\mathbf{e}')=\alpha'\mathbf{e}'$  (resp. $\varphi(e_1)=\alpha_0'e_1$) with $\alpha',\alpha_0',b \in GL_d(R)$, then we have $(\alpha')^T=(b^\sigma)^{T}\alpha^T (b^{-1})^T$ (resp. $(\alpha_0')^T=(b^{\nu})^T\alpha_0^T(b^{-1})^T$).
	In particular, a dual Frobenius structure (resp. dual $p$-Frobenius structure)
	of $M$ is unique up to $\sigma$-skew-conjugation (resp. $\nu$-skew-conjugation) by elements of $GL_d(R)$. We remark that if $M_\psi$ has rank one, then $p$-Frobenius structures (resp. Frobenius structures) 
	are also dual $p$-Frobenius structures (resp. dual Frobenius structures).

	\subsection{Local Frobenius structures}
	\label{subsection: some local Frobenius structures}
	We now restrict ourselves to the case where $\overline{S}=\Spec(\mathbb{F}_q((t)))$.
	In particular, unit-root $F$-crystals over $\overline{S}$ correspond to representations of
	$G_{\mathbb{F}_q((t))}$, the absolute Galois group of $\mathbb{F}_q((t))$. Note that
	since $\mathcal{O}_{\mathcal{E}}$ is a local ring, all locally
	free modules are free.
	
	\subsubsection{Unramified Artin-Schreier-Witt characters}
	\begin{proposition} \label{proposition: unramified character proposition}
		Let $\nu$ be any $p$-Frobenius endomorphism of $\mathcal{O}_\mathcal{E}$
		and let $\sigma=\nu^a$. 
		Let $\psi:G_{\mathbb{F}_q((t))} \to \mathcal{O}_L^\times$ be a continuous character and let $M_\psi$ be the
		corresponding unit-root $F$-crystal. Assume that $Im(\psi)\cong \Z/p^n\Z$
		and that $\psi$ is unramified. Then there exists a $p$-Frobenius structure
		$\alpha_0$ of $M_\psi$ with $\alpha_0 \in 1+\gothm$ (recall $\gothm$ is the
		maximal ideal of $\mathcal{O}_L$). Furthermore, if $c \in 1+\gothm\mathcal{O}_\mathcal{E}$ is another $p$-Frobenius structure
		of $M_\psi$, there exists $b\in 1+\gothm\mathcal{O}_\mathcal{E}$ with $\alpha_0=\frac{b^\nu}{b}c$.
	\end{proposition}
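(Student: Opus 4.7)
The plan is to reduce the problem to an unramified coefficient extension and then invoke Hilbert 90 to produce the desired basis of $M_\psi$.

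Since $\psi$ is unramified with $Im(\psi) \cong \Z/p^n\Z$, it factors through a character $\psi_0: G \to \mathcal{O}_L^\times$ where $G = Gal(\mathbb{F}_{q^{p^n}}/\mathbb{F}_q) \cong \Z/p^n\Z$, corresponding to the finite \'etale cover $\overline{S}_1 = \Spec(\mathbb{F}_{q^{p^n}}((t)))$ of $\overline{S} = \Spec(\mathbb{F}_q((t)))$. Let $L'/L$ be the unramified extension with residue field $\mathbb{F}_{q^{p^n}}$ and set $R_1 = \mathcal{O}_{L'} \otimes_{\mathcal{O}_L} \mathcal{O}_\mathcal{E}$; then $\nu$ extends canonically to $R_1$, and by the recipe recalled just before the proposition, $M_\psi = (R_1 \otimes_{\mathcal{O}_L} V)^G$ with $V = \mathcal{O}_L v_0$ carrying the $G$-action via $\psi_0$. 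The aim is now to produce a $G$-invariant basis with especially well-controlled $\varphi_0$-image.

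The map $g \mapsto \psi_0(g)^{-1}$ is a $1$-cocycle with values in $1 + \gothm \subset 1 + \gothm_{L'}$, the cocycle condition being automatic since $Im(\psi_0) \subset \mathcal{O}_L^\times$ is $G$-fixed. Vanishing of $H^1(G, 1 + \gothm_{L'})$ follows from the long exact sequence attached to
\[ 1 \to 1 + \gothm_{L'} \to \mathcal{O}_{L'}^\times \to \mathbb{F}_{q^{p^n}}^\times \to 1, \]
since the reduction $\mathcal{O}_L^\times \to \mathbb{F}_q^\times$ is surjective and $H^1(G, \mathcal{O}_{L'}^\times) = 0$ by Hilbert 90 for the cyclic unramified extension $L'/L$. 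Thus there exists $\tau \in 1 + \gothm_{L'}$ with $g(\tau) = \psi_0(g)^{-1} \tau$ for every $g \in G$. The element $e := \tau \otimes v_0$ is then $G$-invariant, and since $\tau \in R_1^\times$ it is an $R_1$-basis of $R_1 \otimes V$; by Galois descent it is therefore an $\mathcal{O}_\mathcal{E}$-basis of $M_\psi$. Computing $\varphi_0(e) = \tau^\nu \otimes v_0 = (\tau^\nu/\tau)\, e$ shows that the resulting $p$-Frobenius structure is $\alpha_0 = \tau^\nu/\tau$. Both $\tau$ and $\tau^\nu$ lie in $1 + \gothm_{L'}$, so $\alpha_0 \in 1 + \gothm_{L'}$; but $\alpha_0 \in \mathcal{O}_\mathcal{E}$, hence $\alpha_0 \in \mathcal{O}_\mathcal{E} \cap (1 + \gothm_{L'}) = 1 + \gothm$, as required.

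For the uniqueness statement, suppose $c \in 1 + \gothm \mathcal{O}_\mathcal{E}$ is another $p$-Frobenius structure. The change-of-basis formula recalled before the proposition produces $b \in \mathcal{O}_\mathcal{E}^\times$ with $\alpha_0 = (b^\nu/b)\, c$, unique up to multiplication by an element of $\mathcal{O}_\mathcal{E}^{\nu=1,\times} = \mathcal{O}_E^\times$. Reducing modulo $\gothm$ and using $\bar\alpha_0 = \bar c = 1$ yields $\bar b^{\,p} = \bar b$ in $\mathbb{F}_q((t))^\times$, so $\bar b \in \mathbb{F}_p^\times$. Multiplying $b$ by the inverse of the Teichm\"uller lift $[\bar b] \in \Z_p^\times \subset \mathcal{O}_E^\times$, which is $\nu$-invariant and hence does not alter $b^\nu/b$, yields $b \in 1 + \gothm \mathcal{O}_\mathcal{E}$. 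The main nontrivial input is the Hilbert 90 vanishing that places $\tau$ inside the pro-$p$ principal units $1 + \gothm_{L'}$ rather than merely in $\mathcal{O}_{L'}^\times$; once this is in place, the descent to $\mathcal{O}_L$ via $G$-invariance and the reduction argument for $b$ are routine bookkeeping.
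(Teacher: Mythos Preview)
Your proof is correct and self-contained. The paper itself does not give a proof here but simply cites \cite[Proposition~5.4]{kramermiller-padic}, so a direct line-by-line comparison is not possible; that said, your Hilbert~90 argument (trivializing the $1$-cocycle $g\mapsto\psi_0(g)^{-1}$ inside $1+\gothm_{L'}$ via the long exact sequence for $1\to 1+\gothm_{L'}\to\mathcal{O}_{L'}^\times\to\mathbb{F}_{q^{p^n}}^\times\to 1$, then descending) is exactly the natural approach for producing a constant Frobenius structure on an unramified rank-one unit-root $F$-crystal, and is almost certainly what the cited proposition does as well. One small cosmetic point: since $\tau\in\mathcal{O}_{L'}^\times$ you actually get the stronger conclusion $\alpha_0=\tau^\nu/\tau\in\mathcal{O}_{L'}\cap\mathcal{O}_{\mathcal{E}}=\mathcal{O}_L$, so $\alpha_0\in 1+\gothm$ follows immediately from $\gothm_{L'}\cap\mathcal{O}_L=\gothm$ without needing to pass through $\mathcal{O}_{\mathcal{E}}$.
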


	\begin{proof}
		This is essentially the same as \cite[Proposition 5.4]{kramermiller-padic}.
	\end{proof}

	\subsubsection{Wild Artin-Schreier-Witt characters}
	\label{subsection: frobenius structures for asw extensions}
	A global version over $\mathbb{G}_m$ of the following result is commonplace in the literature (see e.g. \cite[\S 4.1]{Wan-variationNP} for the exponential sum situation
	or \cite{Liu-Wei-witt-coverings}). However, to the best of our knowledge,
	the local version presented below does not appear anywhere. 
	
	\begin{proposition} \label{theorem: ASW frobenius structure}
		Let $\nu$ be the $p$-Frobenius endomorphism of $\mathcal{O}_{\mathcal{E}}$ sending $t$ to $t^p$ and
		let $\sigma=\nu^{a}$. Let $\psi:G_{\mathbb{F}_q((t))} \to \mathcal{O}_L^\times$ be a continuous character and let $M_\psi$ be the
		corresponding unit-root $F$-crystal. Assume that $Im(\psi)\cong \Z/p^n\Z$. Let 
		$K$ 
		be the fixed field of $\ker(\psi)$ and let $s$ be the Swan conductor
		of $\psi$. We assume that $\pi_s \in \mathcal{O}_E$. Then there exists a $p$-Frobenius 
		structure $E_r$ of 
		$\psi$ such that $E_r \in \mathcal{O}_L\llbracket  \pi_s t^{-1} \rrbracket$ and $E_r \equiv 1 \mod \gothm$. Furthermore,
		if $c\in 1+\gothm \mathcal{O}_\mathcal{E}$ is another $p$-Frobenius structure,
		there exists $b\in 1+\gothm \mathcal{O}_\mathcal{E}$ with $E_r=\frac{b^\nu}{b}c$.
	\end{proposition}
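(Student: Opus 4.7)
The plan is to construct $E_r$ explicitly using the Artin-Schreier-Witt presentation of $\psi$ together with Dwork's splitting function, and then to identify the resulting rank one $\varphi$-module with $M_\psi$.

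I would first use Artin-Schreier-Witt theory to write $\psi$ as the class of a Witt vector $\mathbf{f}=(f_0,\dots,f_{n-1}) \in W_n(\mathbb{F}_q((t)))$ modulo $(F-1)W_n(\mathbb{F}_q((t)))$, where $F$ is the Witt-vector Frobenius. Using standard reduction arguments (subtracting elements of the form $F\mathbf{c}-\mathbf{c}$), I would normalize the representative so that for each $i$ either $\operatorname{ord}_t(f_i)\geq 0$ or $p\nmid \operatorname{ord}_t(f_i)$. Under this normalization, Kato's formula for the Swan conductor of an ASW character gives $s=\max_{0\leq i\leq n-1} p^{n-1-i}m_i$, where $m_i=\max(0,-\operatorname{ord}_t(f_i))$. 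I would then lift each $f_i$ to $\widetilde{f}_i \in t^{-m_i}\mathcal{O}_\mathcal{E}$.

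The next step is the explicit construction of $E_r$. Let $\theta_k(X)\in 1+X\mathcal{O}_L\llbracket X\rrbracket$ be an Artin-Hasse exponential twisted by a primitive $p^k$-th root of unity in $E$, so that $\theta_k$ serves as a splitting function for the level-$k$ Artin-Schreier-Witt equation; the hypotheses $\pi_s\in \mathcal{O}_E$ and that $E$ contains enough $p$-th power roots of unity guarantee the requisite roots and convergence. I would then set
\begin{equation*}
E_r \;=\; \prod_{i=0}^{n-1} \theta_{n-i}\!\left(\widetilde{f}_i^{\,p^{n-1-i}}\right),
\end{equation*}
or an analogous iterated formula dictated by the Witt-vector multiplication rules. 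The growth condition $E_r\in \mathcal{O}_L\llbracket \pi_s t^{-1}\rrbracket$ would reduce to a monomial-by-monomial estimate: the coefficient of $t^{-N}$ contributed by $\theta_{n-i}(\widetilde{f}_i^{\,p^{n-1-i}})$ carries a factor of $\pi_{p^{n-i}}$ to a power of at least $N/(p^{n-1-i}m_i)$, and since $s\geq p^{n-1-i}m_i$ by Kato's formula, this factor is divisible by $\pi_s^{N}$. Having constructed $E_r$, I would verify that it is a $p$-Frobenius structure of $M_\psi$ by computing the character attached to the rank one $\varphi$-module $(\mathcal{O}_\mathcal{E} e_0,\, \varphi_0(e_0)=E_r e_0)$ via Katz's equivalence (Theorem~\ref{mod p riemann-hurwitz}) and matching it with the ASW character of $\mathbf{f}$; this comes down to the splitting property of the $\theta_k$.

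For the uniqueness claim, as in Proposition~\ref{proposition: unramified character proposition}, any two $p$-Frobenius structures of $M_\psi$ differ by a change of basis, so $c=\frac{b^\nu}{b}E_r$ for some $b\in \mathcal{O}_\mathcal{E}^\times$. Reducing modulo $\gothm$, the hypothesis $c,E_r\in 1+\gothm\mathcal{O}_\mathcal{E}$ forces $\bar{b}^{\,p}=\bar{b}$ in $\mathbb{F}_q((t))$, hence $\bar{b}\in \mathbb{F}_p^\times$; dividing $b$ by the Teichm\"uller lift of $\bar{b}$ (which is $\nu$-invariant and so leaves $b^\nu/b$ unchanged) arranges $b\in 1+\gothm\mathcal{O}_\mathcal{E}$, as desired. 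The main obstacle I expect is the growth estimate in the middle paragraph: matching the Witt-level structure of a carefully chosen reduced representative $\mathbf{f}$ with Dwork's splitting function so that the pole-order bound given by Kato's formula translates into the sharp valuation statement $E_r\in \mathcal{O}_L\llbracket \pi_s t^{-1}\rrbracket$ requires precise bookkeeping and likely a more subtle normalization than the naive product above.
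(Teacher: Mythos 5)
Your high-level strategy matches the paper's: realize $\psi$ through Artin--Schreier--Witt theory and build the $p$-Frobenius structure from Artin--Hasse splitting factors, with the Swan conductor controlling growth. But the core formula has a genuine gap. If $\mathbf{f}=(f_0,\dots,f_{n-1})$ are the Witt \emph{components}, then the Witt vector decomposes as $\sum_i V^i[f_i]$ (Verschiebung of Teichm\"uller lifts), and over the imperfect ring $\mathbb{F}_q((t))$ this is \emph{not} the same as $\sum_i p^i[\,\cdot\,]$. Consequently the ASW character of $\mathbf{f}$ does not factor as a product of characters attached to the individual $f_i$, and $E_r=\prod_{i}\theta_{n-i}\bigl(\widetilde f_i^{\,p^{n-1-i}}\bigr)$ is a ghost-component heuristic rather than an actual Frobenius structure of $M_\psi$; your ``monomial-by-monomial'' growth estimate would also have to account for the Witt-addition carry terms you have discarded. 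The subtler normalization you anticipate at the end is exactly the paper's key move: replace the Witt-component description by the Kosters--Wan representative
\[
r(t)\;=\;\sum_{i=0}^{n-1}\sum_{j=0}^{s_i}\bigl[r_{i,j}t^{-j}\bigr]\,p^i,
\]
a genuine $\mathbb{Z}_p$-linear combination of Teichm\"uller lifts of \emph{single monomials}. With that decomposition the character factors honestly over the pairs $(i,j)$, one sets $E_r=\prod_{i,j}E\bigl([r_{i,j}]\,t^{-j}\,\gamma_{n-i}\bigr)$ with each factor an Artin--Hasse exponential evaluated at one Teichm\"uller monomial, and the bound $E_r\in\mathcal{O}_L\llbracket\pi_s t^{-1}\rrbracket$ is immediate from $E(x)\in\mathbb{Z}_p\llbracket x\rrbracket$ and $v_p(\gamma_{n-i})$.

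You also omit a step the paper uses essentially: since $r(t)\in W_n(\mathbb{F}_q[t^{-1}])$, the character extends over $\mathbb{P}^1-\{0\}$ to some $\psi^{ext}$, and the $F$-crystal $M_{\psi^{ext}}$ over the Tate algebra $\mathcal{O}_L\langle t^{-1}\rangle$ is free (all projectives there are free); the Frobenius-structure property of $E_r$ is verified there via the point-count identity \eqref{equation: trace of ASW Frobenius} before base-changing to $\mathcal{O}_\mathcal{E}$. Verifying ``via Katz's equivalence'' directly on $\mathcal{O}_\mathcal{E}$ would have to be spelled out more carefully. Your uniqueness argument is fine and matches what the paper intends.
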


	\begin{proof}
		The extension of $K/\mathbb{F}_q((t))$ corresponds to an equivalence class of 
		$W_n(\mathbb{F}_q((t)))/(\textbf{Fr}-1)W_n(\mathbb{F}_q((t)))$ (here $W_n(\mathbb{F}_q((t)))$ is
		the $n$-th truncated Witt vectors and $\textbf{Fr}$ is the Frobenius map). Following \cite[Proposition 3.3]{Kosters-Wan}, we may 
		represent this 
		equivalence class with
		\begin{align*}
		r(t) &= \sum_{i=0}^{n-1} \sum_{j=0}^{s_i} [r_{i,j}t^{-j}] p^i, \text{ with $r_{i,s_i}\neq 0$ and,} \\
		s&= \min_{i=0}^{n-1} \{p^{n-i}s_i\},
		\end{align*}
		where $r_{i,j} \in \mathbb{F}_q$. Since
		$r(t) \in W_n(\mathbb{F}_q[t^{-1}])$,
		the extension $K/\mathbb{F}_q((t))$ extends to  finite \'etale $\mathbb{F}_q[t^{-1}]$-algebra
		$B$ that fits into a commutative diagram:
		\begin{equation*} 
		\begin{tikzcd}
		\Spec(K) \arrow[d]\arrow[r] & \Spec(B)\arrow[d] \\
		\Spec(\mathbb{F}_q((t)))\arrow[r] & \mathbb{P}^1-\{0\}=\Spec(\mathbb{F}_q[t^{-1}])
		.
		\end{tikzcd}
		\end{equation*}
		In particular, $\psi$ extends to a representation $\psi^{ext}:Gal(B/\mathbb{F}_q[t^{-1}]) \to \mathcal{O}_L^\times$. This extension is
		uniquely defined by the following property: for $k\geq 1$ and $x \in \mathbb{P}^1(\mathbb{F}_{q^k})- \{0\}$ we have
		\begin{align} \label{equation: trace of ASW Frobenius}
			\psi^{ext}(Frob_x)&= \zeta_{p^n}^{Tr_{W_n(\mathbb{F}_{q^k})/W_n(\mathbb{F}_p)}(r([x]))},
		\end{align}
		where $[x]$ denotes the Teichmuller lift of $x$ in $W_n(\mathbb{F}_{q^k})$
		and $\zeta_{p^n}$ is a primitive $p^n$-th root of unity.
		Let $\mathcal{O}_L\ang{t^{-1}}\subset \mathcal{O}_\mathcal{E}$ be the Tate algebra
		in $t^{-1}$ with coefficients in $\mathcal{O}_L$. Note that $\nu$ restricts to a
		$p$-Frobenius endomorphism of $\mathcal{O}_L\ang{t^{-1}}$. All projective modules
		over $\mathcal{O}_L\ang{t^{-1}}$ are free, so that $M_{\psi^{ext}}$ is isomorphic to $\mathcal{O}_L\ang{t^{-1}}$ as an $\mathcal{O}_L\ang{t^{-1}}$-module. We see that 
		$M_{\psi}=M_{\psi^{ext}}\otimes_{\mathcal{O}_{L}\ang{t^{-1}}}\mathcal{O}_{\mathcal{E}}$.
		In particular, any $p$-Frobenius structure of $M_{\psi^{ext}}$ is a $p$-Frobenius
		structure of $M_{\psi}$.
		
		A series $\alpha_0 \in \mathcal{O}_L\ang{t^{-1}}$ is a $p$-Frobenius structure for 
		$M_{\psi^{ext}}$ if for every $x \in \mathbb{P}^1(\mathbb{F}_{q^k})- \{0\}$
		we have
		\begin{align*}
		\prod_{i=0}^{ak-1} \alpha_0([x])^{\nu^{i}} &= \psi^{ext}(Frob_x).
		\end{align*}
		We let $E(x)$ denote the Artin-Hasse exponential and let $\gamma_i$
		be an element of $\Z_p[\zeta_{p^n}]$ with $E(\gamma_n)=\zeta_{p^n}^{p^{n-i}}$. Note that
		$v_p(\gamma_i)=\frac{1}{p^{i-1}(p-1)}$. 
		Thus, from \eqref{equation: trace of ASW Frobenius} we see that
		\begin{align*}
		E_r &= \prod_{i=0}^{n-1} \prod_{j=0}^{s_i} E([r_{i,j}] t^{-j} \gamma_{n-i})
		\end{align*}
		is a $p$-Frobenius structure of $M_{\psi^{ext}}$. Since $E(x) \in \Z_p\llbracket x\rrbracket$, it is clear that $E_r \in 
		\mathcal{O}_L\llbracket \pi_s t^{-1}\rrbracket$.
	
	\end{proof}
	
	\subsubsection{Tame characters}
	Let $\psi:G_{\mathbb{F}_q((t))} \to \mathcal{O}_L^\times$ be a totally ramified tame character
	and let $T=(\mathbf{e}, \epsilon, \omega)$ be
	the corresponding tame ramification datum (see \S \ref{subsection: statement of main results}). Write $\epsilon=e_0+\dots + e_{a-1}p^{a-1}$ and define
	$\epsilon_j=\sum_{i=0}^{a-1} e_{i+j}p^i$.
	
	\begin{proposition} \label{proposition: Frobenius structure of Tame exponent stuff}
		The following hold:
		\begin{enumerate}
			\item The matrix $C=\diag{t^{-\epsilon_0}, \dots, t^{-\epsilon_{a-1}}}$ (resp.
			$C_0=\tcyc{t^{-e_0}, \dots, t^{-e_{a-1}}}$)
			is a dual Frobenius structure (resp. dual $p$-Frobenius structure) of $\bigoplus\limits_{j=0}^{a-1} \psi^{\otimes p^j}$ and $C=C_0^{\nu^{a-1}+\dots + \nu+ 1}$.
			\item Let $A=\diag{x_0, \dots, x_{a-1}}$ (resp. $A_0=\tcyc{y_0, \dots, y_{a-1}}$) be another dual Frobenius structure (resp. dual $p$-Frobenius structure)
			of $\bigoplus\limits_{j=0}^{a-1} \psi^{\otimes p^j}$ with $A=A_0^{\nu^{a-1}+\dots + \nu+ 1}$. 
			Then $v_t(\overline{x_j})= -\epsilon_j + n_j(q-1)$ for some
			$n_j \in \Z$ (here $\overline{x_j}$ is the image of $x_j$ in $\mathbb{F}_q((t))$). Furthermore, there exists $B=\diag{b_0, \dots, b_{a-1}}$
			with $v_t(\overline{b_j})=n_j$ such that $B^{\sigma}AB^{-1} = C$ (resp. $B^{\nu}A_0B^{-1}=C_0$).
		\end{enumerate}
	\end{proposition}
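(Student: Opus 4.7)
The plan is to make $\bigoplus_{j=0}^{a-1}\psi^{\otimes p^j}$ explicit via Kummer theory and read off both structures from the resulting basis. For part (1), I would introduce the tame cover with parameter $T$ satisfying $T^{q-1}=t$, so that $\mathrm{Gal}(\mathbb{F}_q((T))/\mathbb{F}_q((t)))=\mathbb{F}_q^\times$ and $\psi(\omega)=\tilde{\omega}^\epsilon$ for $\omega\in\mathbb{F}_q^\times$. Because the image of $\psi$ lies in $\mu_{q-1}\subset\mathcal{O}_{L_0}^\times$ and $\nu$ acts on Teichm\"uller lifts by $p$-th power, we have $\psi^\nu=\psi^{\otimes p}$; following \S\ref{subsection: local rank one crystals}, this realizes $\bigoplus_j\psi^{\otimes p^j}$ as the extension of scalars to $\mathcal{O}_L$ of $V_\psi$, regarded as a rank-$a$ $\mathcal{O}_E$-module via restriction of scalars along $\mathbb{Z}_p\hookrightarrow\mathcal{O}_{L_0}$. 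I then pick basis elements $\mathbf{e}_j=T^{-\epsilon_j}\otimes e_j$ of the $j$-th summand, where $e_j$ generates the factor on which Galois acts by the character of exponent $\epsilon_j/(q-1)$. Using $\sigma(T)=T^q$ one reads off $\varphi(\mathbf{e}_j)=t^{-\epsilon_j}\mathbf{e}_j$, so $C=\diag{t^{-\epsilon_0},\dots,t^{-\epsilon_{a-1}}}$. The analogous computation for $\varphi_0=\nu\otimes_{\mathcal{O}_E}\mathrm{id}$, combined with the fact that $\nu$ cycles the summands under the decomposition $\mathcal{O}_L\otimes_{\mathbb{Z}_p}\mathcal{O}_{L_0}\cong\prod_j\mathcal{O}_L$, gives $C_0=\tcyc{t^{-e_0},\dots,t^{-e_{a-1}}}$. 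The identity $C=C_0^{\nu^{a-1}+\dots+1}$ then reduces to a direct matrix computation: the ordered product of cyclic matrices is diagonal, and its $(i,i)$-entry evaluates to $\prod_{k=0}^{a-1}t^{-p^k e_{i+k}}=t^{-\epsilon_i}$ via the $p$-adic expansion of $\epsilon_i$.

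For part (2), I would proceed in two steps. First, reducing modulo $\gothm$ gives an $F$-crystal over $\mathbb{F}_q((t))$ that still corresponds to $\bigoplus_j\psi^{\otimes p^j}$ (since $\psi$ is tame), with dual Frobenius structure $\overline{A}=\diag{\overline{x_0},\dots,\overline{x_{a-1}}}$. Because the characters $\psi^{\otimes p^j}$ are pairwise distinct, any automorphism of $\bigoplus_j\psi^{\otimes p^j}$ preserves the decomposition, so the skew-conjugation identifying $\overline{A}$ with the standard $\overline{C}=\diag{t^{-\epsilon_0},\dots,t^{-\epsilon_{a-1}}}$ must be diagonal. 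Comparing leading $t$-adic orders along the diagonal yields $v_t(\overline{x_j})=-\epsilon_j+n_j(q-1)$ for some $n_j\in\mathbb{Z}$.

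Second, I lift the residual diagonal to $B=\diag{b_0,\dots,b_{a-1}}$ with $v_t(\overline{b_j})=n_j$ satisfying both $B^\sigma A B^{-1}=C$ and $B^\nu A_0 B^{-1}=C_0$. The key observation is that it suffices to solve the $p$-Frobenius equation alone: substituting $A_0=B^{-\nu}C_0 B$ into $A=A_0^{\nu^{a-1}}\cdots A_0$ telescopes the intermediate $B^{\pm\nu^k}$ factors and yields $A=B^{-\sigma}CB$ automatically. The $p$-Frobenius equation unfolds into the cyclic system $b_{j+1\bmod a}^\nu\,y_j=t^{-e_j}b_j$ for $j=0,\dots,a-1$; iterating $\nu$-applications around the cycle collapses this into a single $\sigma$-equation for $b_0$ whose right-hand side has the correct valuation determined by $n_0$. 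The residual solution from Step one then lifts to $\mathcal{O}_\mathcal{E}^\times\cdot t^{n_0}$ by a Hensel-type argument on $\sigma-1$ acting on $1$-units, in the spirit of the proof of Proposition \ref{proposition: unramified character proposition}. The remaining $b_j$ are determined from $b_0$ by back substitution through the cyclic system.

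The main technical obstacle is producing a single diagonal $B$ that simultaneously realizes both conjugations; the telescoping identity above is the essential mechanism and relies on the precise diagonal/cyclic matching between $(A,C)$ and $(A_0,C_0)$. A secondary difficulty is solving the resulting $\sigma$-equation with prescribed leading term in $\mathcal{O}_\mathcal{E}^\times$, which uses the Henselian nature of $\mathcal{O}_\mathcal{E}$ together with the existence of a residual solution from Step one.
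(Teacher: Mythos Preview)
For part (1), your plan matches the paper's proof: both take the Kummer parameter $u=t^{1/(q-1)}$, write down the Galois-invariant basis $\{u^{-\epsilon_j}\otimes v_j\}$, and compute the action of $\sigma$ and $\nu$ on it directly.

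For part (2), your argument has a gap: you claim the characters $\psi^{\otimes p^j}$ are pairwise distinct, but nothing in the hypotheses guarantees this. If $d=|\mathrm{Im}(\psi)|$ satisfies $\mathrm{ord}_d(p)<a$ (for instance whenever $d\mid p-1$), the characters repeat and your deduction that the change-of-basis matrix is forced to be diagonal breaks down. The paper's proof bypasses this entirely by working in the opposite direction: rather than arguing that an abstract skew-conjugation must be diagonal, it simply computes $B^{-\sigma}CB$ and $B^{-\nu}C_0B$ for a diagonal $B=\diag{b_0,\dots,b_{a-1}}$ and observes that these stay diagonal and $\mathrm{tcyc}$ respectively, with entries $b_j^{1-\sigma}t^{-\epsilon_j}$ and $b_j\,b_{j+1}^{-\nu}t^{-e_j}$. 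Matching against $x_j$ and $y_j$ reduces everything to rank-one equations in each coordinate, and these are solvable because $x_j$ and $t^{-\epsilon_j}$ are both Frobenius structures of the same rank-one summand and hence differ by $b^\sigma/b$ for some unit $b$. The valuation claim and the construction of $B$ both fall out of this single computation, with no distinctness hypothesis and no separate Hensel-type lifting step. Your telescoping observation---that solving $B^\nu A_0 B^{-1}=C_0$ automatically gives $B^\sigma A B^{-1}=C$---is correct and is implicitly why the paper can treat both skew-conjugations at once.
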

	\begin{proof}
		Let $G_{\mathbb{F}_q((t))}$ act on $\mathcal{L}=\bigoplus\limits_{j=0}^{a-1} v_j\mathcal{O}_L$ 
		via $ \bigoplus\limits_{j=0}^{a-1} \psi^{\otimes p^j}$. Let $u=t^{\frac{1}{q-1}}$ and
		let $\mathcal{E}'$ be the Amice ring over $L$ with parameter $u$. The $F$-crystal
		associated to $ \bigoplus\limits_{j=0}^{a-1} \psi^{\otimes p^j}$ is
		$(\mathcal{O}_{\mathcal{E}'} \otimes \mathcal{L})^{G_{\mathbb{F}_q((t))}}$. In particular,
		we see that $\{u^{-\epsilon_j}\otimes v_j \}$ is a basis of $(\mathcal{O}_{\mathcal{E}'} \otimes \mathcal{L})^{G_{\mathbb{F}_q((t))}}$. The first part of the proposition follows by considering the action of $\nu$ and $\sigma$ on this basis. 
		To deduce the second part of the proposition, observe what happens when
		skew-conjugating $C$ and $C_0$ by a diagonal matrix.
	\end{proof}

	\subsection{The $F$-crystal associated to $\rho$}
	We now continue with $\rho$ from \S \ref{subsection: statement of main results}
	and the setup from \S \ref{section: global bounds}.
	\subsubsection{The Frobenius structure of $\rho^{wild}$}
	Let $\mathcal{L}$ be a rank one $\mathcal{O}_L$-module on which $\pi_1^{et}(V)$ acts 
	through 
	$\rho^{wild}$. Let $f:C\to X$ be the $\Z/p^n\Z$-cover that trivializes $\rho^{wild}$. 
	Let $\overline{R}$ be the $\overline{B}$-algebra with $C \times_X V=\Spec(\overline{R})$. We may deform $\overline{B}\to \overline{R}$
	to a finite \'etale map $\widehat{B} \to \widehat{R}$. The $F$-crystal
	corresponding to $\rho$ is the $\widehat{B}$-module $M=(\widehat{R} \otimes 
	\mathcal{L})^{Gal(C/X)}$. 
	For each $Q \in W$ and $P \in f^{-1}(Q)$, we obtain a finite extension
	$\mathcal{E}_P^\dagger$ of $\mathcal{E}_Q^\dagger$ (recall from \S \ref{subsection: basic setup} that $W=\eta^{-1}(\{0,1,\infty\}$). 
	As in \S \ref{subsection: expansion around local parameters},
	we may consider the ring of overconvergent functions $R^\dagger$, which makes the following
	diagram Cartesian:
		\begin{equation*} 
	\begin{tikzcd}
	R^\dagger \arrow[d]\arrow[r] & \bigoplus\limits_{P \in f^{-1}(W)} \arrow[d] 
	\mathcal{O}_{\mathcal{E}_{P}}^\dagger \\
	\widehat{R}\arrow[r] & \bigoplus\limits_{P \in f^{-1}(W)} \mathcal{O}_{\mathcal{E}_{P}} 
	.
	\end{tikzcd}
	\end{equation*}
	Since the action of $Gal(C/X)$  (resp. $\nu$) on $\bigoplus_{P \in f^{-1}(Q)} \mathcal{O}_{\mathcal{E}_P}$ preserves $\bigoplus_{P \in f^{-1}(Q)} \mathcal{O}_{\mathcal{E}_P^\dagger}$,
	we see that $Gal(C/X)$ (resp. $\nu$) acts on $R^\dagger$ (see, e.g. \cite[\S 2]{Tsuzuki-finite_local_monodromy}). This gives the following proposition.
	
	\begin{proposition} \label{corollary: Frobenius descends to something OC}
		Let $M^\dagger = (R^\dagger \otimes 
		\mathcal{L})^{Gal(C/X)}$. The map $M^\dagger\otimes_{B^\dagger} \widehat{B}\to M$ 
		is a $\nu$-equivariant isomorphism.   
	\end{proposition}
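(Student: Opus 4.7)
The plan is to reduce the statement to a flat-base-change property for invariants under a finite group, once the ring $R^\dagger$ is identified as the weak completion of $\overline{R}$ over $B^\dagger$.

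First I would verify that $B^\dagger \to R^\dagger$ is a finite \'etale Galois extension with group $Gal(C/X)$. By Monsky--Washnitzer theory, the weak completion functor gives an equivalence between finite \'etale $\overline{B}$-algebras and finite \'etale $B^\dagger$-algebras; applied to the cover $\overline{B} \to \overline{R}$ this produces a finite \'etale $B^\dagger$-algebra $R'$ with $p$-adic completion $\widehat{R}$ and with $Gal(C/X)$ acting. Because $\mathcal{O}_{\mathcal{E}_P^\dagger}$ is the Henselian unramified lift of the residual extension $\mathcal{E}_P/\mathcal{E}_Q$, expansion around each $P \in f^{-1}(W)$ carries $R'$ into $\bigoplus_{P} \mathcal{O}_{\mathcal{E}_P^\dagger}$. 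These are precisely the two properties characterizing $R^\dagger$ in the Cartesian diagram preceding the proposition, so $R' = R^\dagger$.

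Next I would prove that $R^\dagger \otimes_{B^\dagger} \widehat{B} \cong \widehat{R}$ as $Gal(C/X)$- and $\nu$-equivariant algebras. Finiteness of $R^\dagger$ over $B^\dagger$ ensures that this tensor product is automatically $p$-adically complete, so it agrees with the $p$-adic completion of $R^\dagger$, which is $\widehat{R}$. Both the Galois action and the $\nu$-action match on either side by uniqueness of the \'etale lift and because $\nu$ on each side extends the absolute Frobenius of $\overline{R}$.

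The proposition then follows by tensoring with $\mathcal{L}$ over $\mathcal{O}_L$ and taking $Gal(C/X)$-invariants. Since $Gal(C/X)$ is finite and $\widehat{B}$ is flat over $B^\dagger$ (being the $p$-adic completion of a Noetherian ring), invariants commute with the base change $-\otimes_{B^\dagger}\widehat{B}$, giving
\[
M^\dagger \otimes_{B^\dagger} \widehat{B} \;=\; (R^\dagger \otimes_{\mathcal{O}_L}\mathcal{L})^{Gal(C/X)} \otimes_{B^\dagger} \widehat{B} \;\cong\; \bigl(\widehat{R}\otimes_{\mathcal{O}_L}\mathcal{L}\bigr)^{Gal(C/X)} \;=\; M,
\]
and the isomorphism intertwines $\nu$ by construction. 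The main technical obstacle is the identification $R'=R^\dagger$ in the first step: one must compatibly match the global Monsky--Washnitzer lift with the local Henselian lifts of the cover at the ramified points $f^{-1}(W)$, so that the overconvergence conditions imposed through the Cartesian square really cut out the weak completion.
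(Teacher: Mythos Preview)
Your argument is correct and supplies the details the paper omits. In the paper the proposition is stated immediately after the sentence observing that the $Gal(C/X)$-action and $\nu$ preserve $\bigoplus_{P}\mathcal{O}_{\mathcal{E}_P^\dagger}$ and hence act on $R^\dagger$; no further proof is given beyond ``This gives the following proposition.'' Your three-step argument (identify $R^\dagger$ as the weak-completion lift of $\overline{R}$, use finiteness to get $R^\dagger\otimes_{B^\dagger}\widehat{B}\cong\widehat{R}$, then commute $Gal(C/X)$-invariants with the flat base change $B^\dagger\to\widehat{B}$) is exactly the unpacking one would expect and is consistent with the paper's implicit reasoning.
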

	\begin{lemma} \label{lemma: the $F$-crystal is free}
		The module $M^\dagger$ (resp. $M$) is a free $B^\dagger$-module (resp. $\widehat{B}$-module). 
		Furthermore, $M$ has a $p$-Frobenius structure $\alpha_0$ contained in 
		$1 + \gothm B^\dagger$. 
	\end{lemma}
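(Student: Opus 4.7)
\noindent\emph{Proof plan.} The plan is to exploit that $Im(\rho^{wild}) \subset \mu_{p^n}(\mathcal{O}_L) \subset 1 + \gothm$, so that modulo $\gothm$ the representation $\rho^{wild}$ becomes trivial. Consequently $Gal(C/X)$ acts trivially on $\mathcal{L}/\gothm \mathcal{L}$, and a choice of $\mathcal{O}_L$-basis $v \in \mathcal{L}$ identifies
\[ M/\gothm M \;=\; (\widehat{R}/\gothm \otimes_{\mathcal{O}_L} \mathcal{L}/\gothm\mathcal{L})^{Gal(C/X)} \;\cong\; \overline{R}^{Gal(C/X)} \otimes (\mathcal{L}/\gothm\mathcal{L}) \;\cong\; \overline{B} \cdot \bar v, \]
free of rank one over $\overline{B}$; the same computation gives $M^\dagger/\gothm M^\dagger \cong \overline{B}\cdot \bar v$. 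This single observation simultaneously drives the freeness and the shape of the Frobenius structure.

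For freeness, $M$ is a finitely generated projective $\widehat{B}$-module: $\widehat{R}$ is finite \'etale over $\widehat{B}$ as the deformation of $\overline{R}/\overline{B}$ (see \S\ref{subsection: basic setup}), so $\widehat{R}\otimes \mathcal{L}$ is projective, and $M$ is cut out by the idempotent $\frac{1}{|Gal(C/X)|}\sum_\sigma \rho^{wild}(\sigma)^{-1}\sigma$. Rank-one projectivity together with $M/\gothm M \cong \overline{B}$ gives, via Nakayama over the $\gothm$-adically complete ring $\widehat{B}$, that any lift $e \in M$ of $\bar v$ is a basis. The identical argument applied to the finite \'etale $B^\dagger$-algebra $R^\dagger$ (see \S\ref{subsection: expansion around local parameters}), using that $\gothm B^\dagger$ lies in the Jacobson radical of the weakly complete ring $B^\dagger$, produces a basis $e$ of $M^\dagger$ refining $\bar v$.

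For the $p$-Frobenius structure, the endomorphism $\varphi_0 = \nu \otimes \mathrm{id}$ of $R^\dagger \otimes_{\mathcal{O}_L} \mathcal{L}$ commutes with the $Gal(C/X)$-action (since $\rho^{wild}$ takes values in $\mathcal{O}_E$, over which $\nu$ is the identity) and preserves overconvergence by the same reasoning as Proposition \ref{corollary: Frobenius descends to something OC}, so it restricts to a $p$-Frobenius structure $\varphi_0: M^\dagger \to M^\dagger$ whose $a$-th iterate is the given Frobenius structure $\alpha$ on $M$. Writing $\varphi_0(e) = \alpha_0 e$ and reducing mod $\gothm$, the identification $M^\dagger/\gothm M^\dagger \cong \overline{B}$ sends $\bar e \mapsto 1$ and $\varphi_0$ to the standard $p$-power Frobenius on $\overline{B}$, so $\bar\alpha_0 = \nu(1) = 1$, giving $\alpha_0 \in 1 + \gothm B^\dagger$. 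That $\alpha_0$ is a unit is automatic since $\gothm B^\dagger$ is in the Jacobson radical.

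The one step requiring real care is the Nakayama argument for $M^\dagger$ over $B^\dagger$, as this ring is only weakly complete rather than $\gothm$-adically complete. If one prefers to avoid that subtlety, I would patch a basis directly using the Cartesian square \eqref{overconvergent expansion diagram}: each local module $M^\dagger \otimes \mathcal{O}_{\mathcal{E}_Q^\dagger}$ is automatically free (a finitely generated torsion-free module over the DVR $\mathcal{O}_{\mathcal{E}_Q^\dagger}$), and these local trivializations glue with the $\widehat{B}$-basis constructed above to yield a basis of $M^\dagger$.
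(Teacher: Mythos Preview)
Your strategy---reduce modulo $\gothm$ where $\rho^{wild}$ becomes trivial, identify the reduction with $\overline{B}$, then lift a generator---is correct and is the standard argument (presumably matching the reference \cite[Lemma 5.9]{kramermiller-padic} to which the paper defers).

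There is one genuine gap. The idempotent $\frac{1}{|Gal(C/X)|}\sum_\sigma \rho^{wild}(\sigma)^{-1}\sigma$ does not exist over $\mathcal{O}_L$: here $|Gal(C/X)|=p^n$, which is not a unit. This breaks your justification for projectivity of $M$, and it is also what would be needed to justify the displayed equality $M/\gothm M = (\overline{R}\otimes\overline{\mathcal{L}})^{Gal(C/X)}$, since taking invariants under a $p$-group need not commute with reduction modulo $\gothm$. Both conclusions are nonetheless true; the correct input is that $H^i(G,\overline{R})=0$ for $i>0$ (the normal basis theorem for the finite \'etale Galois cover $\overline{R}/\overline{B}$ makes $\overline{R}$ an induced $G$-module). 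Filtering by powers of $\gothm$ and using completeness then gives $H^1\big(G,\gothm(\widehat{R}\otimes\mathcal{L})\big)=0$, which both restores your displayed isomorphism and lets you solve the cocycle equation $\sigma(r)/r=\rho^{wild}(\sigma)^{-1}$ directly for some $r\in 1+\gothm\widehat{R}$, producing an explicit basis $e=r\otimes v$ of $M$ without ever invoking projectivity (one checks $M=\widehat{B}\cdot e$ since any $s\otimes v\in M$ has $s/r\in\widehat{R}^G=\widehat{B}$). The same argument run over $R^\dagger$---choosing all successive lifts inside a fixed Banach subalgebra so that the series converges overconvergently---or your patching alternative via the Cartesian square \eqref{overconvergent expansion diagram}, handles $M^\dagger$; your computation of $\alpha_0$ modulo $\gothm$ then goes through unchanged.
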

	\begin{proof}
		The proof of this is identical to \cite[Lemma 5.9]{kramermiller-padic}.
	\end{proof}

	\subsubsection{The Frobenius structure of  
		$\bigoplus\limits_{j=0}^{a-1} \chi^{\otimes p^{j}}$}
	\label{subsubsection: global frobenius structure tame}
	By Kummer theory, there exists $\overline{f} \in \overline{B}^\times$
	such that $\chi$ factors through the \'etale $\Z/(q-1)\Z$-cover $\Spec(\overline{B}[\overline{h}]) \to \Spec(\overline{B})$, where $\overline{h}=\sqrt[q-1]{\overline{f}}$. Let $f \in B^\dagger$ be a lift of $\overline{f}$
	and set $h=\sqrt[q-1]{f}$, so that $\Spec(B^\dagger[h]) \to \Spec(B^\dagger)$
	is an \'etale $\Z/(q-1)\Z$-cover whose special fiber is $\Spec(\overline{B}[\overline{h}]) \to \Spec(\overline{B})$. There exists $0\leq \Gamma < q-1$ such that 
	$\chi(g)=\frac{(h^\Gamma)^g}{h^\Gamma}$ for all $g \in \pi_1^{et}(V)$. Write the $p$-adic expansion $\Gamma=\gamma_0 + \dots+ \gamma_{a-1}p^{a-1}$ and define
	\begin{align*}
		\Gamma_j=\sum_{i=0}^{a-1} \gamma_{i+j} p^i.
	\end{align*}
	Note that $\chi^{\otimes p^{j}}(g)= \frac{(h^{\Gamma_j})^g}{h^{\Gamma_j}}$ for each $j$.
	This gives the following proposition. 
	\begin{proposition} \label{proposition: Global Tame Frobenius structure}
		The matrix $N=\diag{f^{-\Gamma_0}, \dots, f^{-\Gamma_{a-1}}}$ (resp.
			$N_0=\tcyc{f^{-\gamma_0}, \dots, f^{-\gamma_{a-1}}}$)
		is a dual Frobenius structure (resp. dual $p$-Frobenius structure) of $\bigoplus\limits_{j=0}^{a-1} \chi^{\otimes p^{j}}$
		and $N=N_0^{\nu^{a-1} +  \dots + 1}$. 
	\end{proposition}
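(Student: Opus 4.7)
The plan is to mimic the local Kummer-theoretic argument of Proposition \ref{proposition: Frobenius structure of Tame exponent stuff} in the global setting. The \'etale cover $\Spec(\overline{B}[\overline{h}]) \to \Spec(\overline{B})$ that trivializes $\chi$ deforms uniquely to an \'etale cover $\Spec(B^\dagger[h]) \to \Spec(B^\dagger)$ with $h^{q-1} = f$. After a skew-conjugation by an element of $1 + pB^\dagger$---permissible since dual Frobenius structures are only defined up to such equivalence---I may assume $\sigma(h) = h^q$ and $\nu(h) = h^p$. The unit-root $F$-crystal $M$ of $\bigoplus_{j=0}^{a-1} \chi^{\otimes p^j}$ is identified via Theorem \ref{mod p riemann-hurwitz} with the $G$-invariants $(B^\dagger[h] \otimes_{\mathcal{O}_E} V_0)^G$, where $G := \G(B^\dagger[h]/B^\dagger)$ and $V_0 = \bigoplus_{j=0}^{a-1} \mathcal{O}_E v_j$ carries the action $\bigoplus \chi^{\otimes p^j}$. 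Since $g(h) = \zeta_g h$ and $\chi^{\otimes p^j}(g) = \zeta_g^{\Gamma_j}$, the elements $e_j := h^{-\Gamma_j} \otimes v_j$ are $G$-invariant and form a $B^\dagger$-basis of $M$.

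The $q$-Frobenius computation is then immediate: $\varphi(e_j) = \sigma(h^{-\Gamma_j}) \otimes v_j = h^{-q\Gamma_j} \otimes v_j = (h^{q-1})^{-\Gamma_j} h^{-\Gamma_j} \otimes v_j = f^{-\Gamma_j} e_j$, so the matrix of $\varphi$ in $\{e_j\}$ is $\diag{f^{-\Gamma_0}, \dots, f^{-\Gamma_{a-1}}} = N$, which is self-transpose and agrees with its dual.

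For the $p$-Frobenius, the key arithmetic input is the identity $p\Gamma_j = \Gamma_{j-1} + \gamma_{j-1}(q-1)$, which I would prove by expanding $p\Gamma_j = \sum_{i=0}^{a-1} \gamma_{j+i} p^{i+1}$, isolating the $i = a-1$ term via $p^a = q$, and applying the wrap-around $\gamma_{j+a-1} = \gamma_{j-1}$. Then $\varphi_0(e_j) = \nu(h^{-\Gamma_j}) \otimes v_j = h^{-p\Gamma_j} \otimes v_j = f^{-\gamma_{j-1}} h^{-\Gamma_{j-1}} \otimes v_j = f^{-\gamma_{j-1}} e_{j-1}$, so that in the paper's convention the matrix of $\varphi_0$ is $\cyc{f^{-\gamma_0}, \dots, f^{-\gamma_{a-1}}}$ and its transpose is exactly $N_0$. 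Finally, the relation $N = N_0^{\nu^{a-1} + \cdots + \nu + 1}$ is the matrix encoding of $\varphi_0^a = \varphi$: iterating yields $\varphi_0^a(e_j) = f^{-(p^{a-1}\gamma_{j-1} + p^{a-2}\gamma_{j-2} + \cdots + \gamma_{j-a})} e_{j-a}$, and the wrap-around $\gamma_{j-k} = \gamma_{j-k+a}$ converts the exponent into $\sum_{i=0}^{a-1} p^i \gamma_{j+i} = \Gamma_j$, recovering $\varphi(e_j)$. The only real obstacle is the index-shifting identity for $p\Gamma_j$ together with correctly matching the transpose/cyclic matrix conventions of Section \ref{section: convensions}; once these are sorted out the entire proposition reduces to direct Kummer-theoretic bookkeeping.
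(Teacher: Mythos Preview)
The proposal is correct and follows exactly the Kummer-theoretic approach the paper intends: the paper's own ``proof'' is the single sentence ``This gives the following proposition,'' relying on the preceding paragraph together with the explicit local computation in Proposition \ref{proposition: Frobenius structure of Tame exponent stuff}, and your argument with the basis $e_j = h^{-\Gamma_j}\otimes v_j$ and the digit-shift identity $p\Gamma_j = \Gamma_{j-1} + \gamma_{j-1}(q-1)$ is precisely the intended expansion. One small point worth tightening: the reduction to $\sigma(h)=h^q$, $\nu(h)=h^p$ is not literally a skew-conjugation of the Frobenius structure but rather uses that $\sigma(h)/h^q$ and $\nu(h)/h^p$ lie in $1+\mathfrak{m}\,\widehat{B}$ (by the Frobenius-lift condition and $G$-invariance), so the resulting Frobenius matrix differs from $N$ only by diagonal factors in $1+\mathfrak{m}\,\widehat{B}$; this is harmless for the paper's applications and matches the level of detail in the paper's local argument.
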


	Let $Q\in W$. Recall from \S \ref{subsection: statement of main results} that
	we associate a tame ramification datum $T_Q=(\mathbf{e}_Q,\epsilon_Q,\omega_Q)$
	to $Q$ and write $\epsilon_Q=\sum e_{Q,i}p^i$. 
	The exponent of $\chi^{\otimes p^j}$ at $Q \in W$ is 
	\begin{align*}
	 \frac{\epsilon_{Q,j}}{q-1} &\mod \Z \text{, where } \\
	\epsilon_{Q,j} &= \sum_{i=0}^{a-1} e_{Q,i+j} p^i. 
	\end{align*}
	By definition we have
	\begin{align*}
	-\Div(\overline{f}^{\Gamma_j}) &= \sum_{Q \in W} (-\epsilon_{Q,j} + (q-1)n_{Q,j})[Q], 
	\end{align*}
	with $n_{Q,j} \in \Z$. Since $0\leq \epsilon_{Q,j}\leq q-2$ and $\sum_Q n_{Q,j} = \frac{\sum_Q\epsilon_{Q,j}}{q-1}$ we know
	\begin{align}
	\sum_{Q \in W} n_{Q,j} &\leq \boldm \leq  r_0 + r_\infty, \label{equation: sum of nonexponent divisor}
	\end{align}
	where we recall that $\mathbf{m}$ is the number of points where $\rho$ is ramified.
	We also have
	\begin{align}
	\begin{split}\label{equation: sum of nonexponent divisor 2}
	\sum_{j=0}^{a-1} \sum_{Q \in W} n_{Q,j} &= \frac{1}{q-1} \sum_{Q \in W} \sum_{j=0}^{a-1}\epsilon_{Q,j} \\
	&= a\Omega_\rho,
	\end{split}
	\end{align}
	where $\Omega_\rho$ is the monodromy invariant introduced in \S \ref{subsection: statement of main results}.

	\subsubsection{Comparing local and global Frobenius structures}
	\label{subsubsection: the local frob and Up}
	We fix $\gls{wild p-frob structure}$ as in Lemma \ref{lemma: the $F$-crystal is free} and
	set $\gls{wild frob structure}=\prod\limits_{i=0}^{a-1} \alpha_0^{\nu^i}$. We also
	let $\gls{tame frob structure}$ and $\gls{tame p-frob structure}$ be as in Proposition \ref{proposition: Global Tame Frobenius structure}. In particular, $\alpha N$ (resp. $\alpha_0 N_0$) is a dual Frobenius structure (resp. dual $p$-Frobenius structure) of $\rho^{wild}\otimes \bigoplus\limits_{j=0}^{a-1} \chi^{\otimes p^{j}}$. Let $Q \in W$ with $Q=P_{*,i}$. There is a map
	$\overline{B} \to \mathbb{F}_q((u_Q))$, where we expand each function on $V$
	in terms of the parameter $u_Q$. This gives a point  
	$\Spec(\mathbb{F}_q((u_{Q}))) 
	\to V$. By pulling back $\rho$ along this point we obtain
	a local representation $\rho_{Q}: 
	G_{\mathbb{F}_q((u_Q))} \to 
	\mathcal{O}_L^\times$, where $G_{\mathbb{F}_q((u_Q))}$ is the absolute Galois group of 
	$\mathbb{F}_q((u_Q))$. We will compare $\alpha_0 N_0$ to the
	local dual $p$-Frobenius structures from \S \ref{subsection: some local Frobenius structures}.

	There are three cases we need to consider.
	The first case is when $*=1$. In this case $\rho_Q^{wild}$ and $\chi_Q$ are both unramified. This is because $\rho$ is only
	ramified at the points $\tau_1,\dots,\tau_{\mathbf{m}}$ and by Lemma \ref{lemma: map to p1} 
	we have $\eta(\tau_i)\in \{0,\infty\}$. The second case is when $*\in \{0,\infty\}$
	and $\rho_Q^{wild}$ is unramified. The last case is when $* \in \{0,\infty\}$ and
	$\rho_Q^{wild}$ is ramified. In each case, we will describe
	a dual $p$-Frobenius structure $\gls{local nice frobenius structure}$ of $\rho_Q^{wild} \otimes \bigoplus\limits_{j=0}^{a-1} \chi_Q^{\otimes p^{j}}$, an element $\gls{wild local change of basis} \in \mathcal{O}_{\mathcal{E}_Q}^\dagger$,
	and a diagonal matrix $\gls{tame local change of basis} \in GL_{a}(\mathcal{O}_{\mathcal{E}^\dagger})$ satisfying:
	\begin{align} \label{equation: a-th root of frobenius}
	\begin{split}
	(b_QM_Q)^{\nu}\alpha_0 N_0 (b_QM_Q)^{-1} &= C_Q, \\
	(b_QM_Q)^{\sigma}\alpha N(b_QM_Q)^{-1}&=C_Q^{\nu^{a-1}+\nu^{a-2}+ \dots + 1}.
	\end{split}
	\end{align}
	The dual $p$-Frobenius structure $C_Q$ will be closely related to the
	dual $p$-Frobenius structures studied in \S \ref{subsection: some local Frobenius structures}.
	It is helpful for us to introduce the following rings: 
	\begin{align*}
	\mathcal{R}_Q &= \bigoplus_{j=0}^{a-1} \mathcal{E}_Q, ~~~~~\mathcal{O}_{\mathcal{R}_Q} = \bigoplus_{j=0}^{a-1} \mathcal{O}_{\mathcal{E}_Q}, \\
	\gls{bounded robba in uQ} &= \bigoplus_{j=0}^{a-1} \mathcal{E}_Q^\dagger, ~~~~~\gls{integral bounded robba in uQ} = \mathcal{R}_Q^\dagger \cap \mathcal{O}_{\mathcal{R}_Q}.
	\end{align*}
	We define $u_{Q,j} \in \mathcal{R}_Q$ to have $u_Q$ in the $j$-th coordinate and zero in the other coordinates.
	For each $Q$ we will define a subspace $\gls{special convergence space}\subset \mathcal{R}_Q^\dagger$ of elements satisfying some precise
	convergence conditions.
	
	\begin{enumerate}[label=\Roman*.]
		\item If $*=1$, then $\nu_Q$ sends $u_Q \mapsto \sqrt[p-1]{(u_Q^{p-1}+1)^p-1}$ (see
		the end of \S \ref{subsection Global Frobenius and Up}). 
		\begin{enumerate}
			\item[(wild)] As $\rho_Q$ is unramified, we know from Proposition \ref{proposition: unramified character proposition} 
			there exists $b_Q \in 1 + \gothm\mathcal{O}_{\mathcal{E}_Q}^\dagger$ such
			that the dual $p$-Frobenius structure $c_Q=\frac{b_Q^\nu}{b_Q}\alpha_0$ of $\rho_Q^{wild}$ lies in $1+\gothm$. 
			\item[(tame)] Since $\chi_Q$ is unramified, the exponent is zero. By Proposition \ref{proposition: Frobenius structure of Tame exponent stuff} there exists $M_Q=\diag{m_{Q,0}, \dots, m_{Q,a-1}}$
			with $v_{u_Q}(\overline{m_{Q,j}})=n_{Q,j}$ such that 
			$M_Q^{\sigma} N M_{Q}^{-1}=\diag{1, \dots, 1}$ and $M_Q^{\nu} N_0 M_{Q}^{-1}=\tcyc{1,\dots, 1}$. 
			
			\item[(both)] We see that $C_Q=\tcyc{c_Q,\dots,c_Q}$ is a 
			dual $p$-Frobenius structure of $\rho_Q^{wild} \otimes \bigoplus\limits_{j=0}^{a-1} \chi_Q^{\otimes p^{j}}$ and that \eqref{equation: a-th root of frobenius} holds.
			Define $\mathcal{O}_{\mathcal{R}_Q}^{con}$ to 
			be $\bigoplus\limits_{j=0}^{a-1} \mathcal{D}$ viewed as a subspace
			of $\mathcal{O}_{\mathcal{R}_Q^\dagger}$ (see \eqref{definition of A} for the
			definition of $\mathcal{D}$).
			From Proposition \ref{proposition: type 2 Frobenius} we have
			\begin{align} 
			\begin{split}\label{UP computation type 1}
			U_p \circ C_Q (p^{b(k+pn)}u_{Q,j}^{-(k+pn)}) & 
			\in 
			p^{n}\mathcal{O}_{\mathcal{R}_Q}^{con}, \\
			U_p \circ C_Q (\mathcal{O}_{\mathcal{R}_Q}^{con}) &\subset \mathcal{O}_{\mathcal{R}_Q}^{con}.
			\end{split}
			\end{align}

		\end{enumerate}
		
		\label{Type A Frobenius blurb}
		
		\item Next, consider the case where $*$ is $0$ or $\infty$ and $\rho^{wild}_{Q}$
		is unramified. 
		Then $\nu_Q$ sends $u_Q \mapsto u_Q^p$. We choose $\mathfrak{s}_Q \in \Q$
		such that the following hold:
		\begin{align}\label{equation: false wild ramification number}
		\begin{split}
			\pi_{\mathfrak{s}_Q} &\in \mathcal{O}_E, \\
			\frac{1}{\mathfrak{s}_Q} - \frac{\omega_Q}{a\mathfrak{s}_Q(p-1)}&\geq 1. 
		\end{split}
		\end{align}
		\begin{enumerate}
			\item[(wild)] From Proposition \ref{proposition: unramified character proposition} there exists $b_Q\in 
			1+\gothm \mathcal{O}_{\mathcal{E}_Q}^\dagger$ such that 
			$c_Q=\frac{b_Q^\nu}{b_Q} \alpha_0  \in 1+\gothm$ is a dual $p$-Frobenius structure
			of $\rho_Q^{wild}$.
			
			\item[(tame)] By Proposition \ref{proposition: Frobenius structure of Tame exponent stuff} there exists $M_Q=\diag{m_{Q,0}, \dots, m_{Q,a-1}}$
			with $v_{u_Q}(\overline{m_{Q,j}})=n_{Q,j}$ such that $M_Q^{\sigma} N M_{Q}^{-1}=\diag{u_Q^{-\epsilon_{Q,0}}, \dots, u_Q^{-\epsilon_{Q,a-1}}}$ and
				$M_Q^{\nu} N_0 M_{Q}^{-1}=\tcyc{u_Q^{-e_{Q,0}},\dots, u_Q^{-e_{Q,a-1}}}$. 
					
			\item[(both)] We see that $C_Q=\tcyc{c_Qu_Q^{-e_{Q,0}},\dots,c_Qu_Q^{-e_{Q,a-1}}}$ is a 
						dual $p$-Frobenius structure of $\rho_Q^{wild}\otimes\bigoplus\limits_{j=0}^{a-1} \chi_Q^{\otimes p^{j}}$ and that \eqref{equation: a-th root of frobenius} holds.
						Define $\mathcal{O}_{\mathcal{R}_Q}^{con}$ to 
						be a copy of $\mathcal{D}_{\mathbf{e}_Q,\mathfrak{s}_Q}$ in
						$\mathcal{O}_{\mathcal{R}_Q^\dagger}$ (recall the definition of $\mathcal{D}_{\mathbf{e},s}$ from \S \ref{subsubsection: type one frob local estimates}). From Proposition \ref{proposition: type 1 Up estimates}
						we have 
						\begin{align}
						\begin{split}\label{UP computation type 2}
						U_p \circ C_Q (\pi_{a\mathfrak{s}_Q}^{q(\mathbf{e}_Q,j)} \pi_{\mathfrak{s}_Q}^{pn} u_{Q,j}^{-n}) &\in\pi_{\mathfrak{s}_Q}^{n(p-1)} \pi_{a\mathfrak{s}_Q}^{-\omega_Q} \mathcal{O}_{\mathcal{R}_Q}^{con}, \\
						U_p \circ C_Q (\mathcal{O}_{\mathcal{R}_Q}^{con}) &\subset \mathcal{O}_{\mathcal{R}_Q}^{con}.
						\end{split}
						\end{align}
			\end{enumerate}

		\item Finally, we consider the case where $*$ is $0$ or $\infty$ and $\rho^{wild}_{Q}$
		is ramified. 
		Then $\nu_Q$ sends $u_Q \mapsto u_Q^p$. 
		\begin{enumerate}
			\item[(wild)] By Proposition \ref{theorem: 
				ASW frobenius structure} there is $b_Q\in 
			1+\gothm \mathcal{O}_{\mathcal{E}_Q}^\dagger$ such that 
			$c_Q=\frac{b_Q^\nu}{b_Q} \alpha_0  \in \mathcal{O}_L\llbracket  \pi_{s_Q} u_Q^{-1}\rrbracket$
			is a dual $p$-Frobenius structure of $\rho_Q^{wild}$ (recall $s_Q$ is
			the Swan conductor of $\rho$ at $Q$). Note that $c_Q \equiv 1 \mod \gothm$.
			
			\item[(tame)] By Proposition \ref{proposition: Frobenius structure of Tame exponent stuff} there exists $M_Q=\diag{m_{Q,0}, \dots, m_{Q,a-1}}$
			with $v_{u_Q}(\overline{m_{Q,j}})=n_{Q,j}$ such that $M_Q^{\sigma} N M_{Q}^{-1}=\diag{u_Q^{-\epsilon_{Q,0}}, \dots, u_Q^{-\epsilon_{Q,a-1}}}$ and
			$M_Q^{\nu} N_0 M_{Q}^{-1}=\tcyc{u_Q^{-e_{Q,0}},\dots, u_Q^{-e_{Q,a-1}}}$.

			\item[(both together)] We see that $C_Q=\tcyc{c_Qu_Q^{-e_{Q,0}},\dots,c_Qu_Q^{-e_{Q,a-1}}}$ is a 
			dual $p$-Frobenius structure of $\rho_Q^{wild}\otimes\bigoplus\limits_{j=0}^{a-1} \chi_Q^{\otimes p^{j}}$ and that \eqref{equation: a-th root of frobenius} holds.
			We
			define $\mathcal{O}_{\mathcal{R}_Q}^{con}$ to 
			be a copy of $\mathcal{D}_{\mathbf{e}_Q,s_Q}$ in
			$\mathcal{O}_{\mathcal{R}_Q^\dagger}$. From Proposition \ref{proposition: type 1 Up estimates}
			we see that 
			\begin{align}
			\begin{split}\label{UP computation type 3}
			U_p \circ C_Q (\pi_{a s_Q}^{q(\mathbf{e}_Q,j)}\pi_{s_Q}^{pn} u_{Q,j}^{-n}) &\in 
			\pi_{s_Q}^{n(p-1)}\pi_{as}^{-\omega_Q}\mathcal{O}_{\mathcal{R}_Q}^{con}, \\
			U_p \circ C_Q (\mathcal{O}_{\mathcal{R}_Q}^{con}) &\subset \mathcal{O}_{\mathcal{R}_Q}^{con}.
			\end{split}
			\end{align}
		\end{enumerate}

	\end{enumerate}
	
	\subsubsection{Comparing global and semi-local Frobenius structures} \label{paragraph: global to semi-local setup}
	We define the following spaces:
	\begin{align*}
	\begin{split} 
	\gls{R} &= \bigoplus_{Q \in W} 
	\mathcal{R}_Q,  ~~~~~~
	\gls{Rdagger}=\bigoplus_{Q \in W} 
	\mathcal{R}_Q^\dagger,\\
	\gls{RtrunQ}&=\begin{cases}
		\bigoplus\limits_{j=0}^{a-1} \mathcal{E}_{Q}^{\leq -1} & \eta(Q)=0,\infty \\
		\bigoplus\limits_{j=0}^{a-1} \mathcal{E}_{Q}^{\leq -p} & \eta(Q)=1,
	\end{cases} \\
	\gls{Rtrun}&= \bigoplus_{Q \in W} 
	\mathcal{R}^{trun}_Q,~~~~~~\gls{OconR}= \bigoplus_{Q\in W} \mathcal{O}_{\mathcal{R}_Q}^{con} \subset \mathcal{R}^\dagger. 
	\end{split}
	\end{align*}
	Define $\mathcal{O}_{\mathcal{R}}$ to be $\bigoplus\limits_{Q \in W} 
	\mathcal{O}_{\mathcal{R}_Q}$ and define $\mathcal{O}_{\mathcal{R}^{trun}}$ to be $\mathcal{R}^{trun} \cap \mathcal{O}_{\mathcal{R}}$. Note that $\mathcal{O}_{\mathcal{R}}^{con}$ is contained in 
	$\mathcal{O}_{\mathcal{R}}$.
	There is a projection map $pr:\mathcal{R} \to \mathcal{R}^{trun}$, which
	is the direct sum of the projection maps described in \S \ref{subsection: basic definitions}.
	By the definition of each summand of $\mathcal{O}_{\mathcal{R}}^{con}$ we see that 
	\begin{align}\label{equation: projecting onto tails ends up in W}
		\ker(pr)\cap \mathcal{O}_\mathcal{R} &\subset \mathcal{O}_{\mathcal{R}}^{con}.
	\end{align}
	We
	may view $\bigoplus\limits_{j=0}^{a-1} \widehat{\mathcal{B}}$ (resp. $\bigoplus\limits_{j=0}^{a-1} \mathcal{B}^\dagger$) as a subspace of $\mathcal{R}$ (resp. $\mathcal{R}^\dagger)$ using the maps in \eqref{overconvergent expansion diagram}. Let $\gls{C}$ (resp. $\gls{M}$ and
	$\gls{b}$) denote the endomorphism of $\mathcal{R}^\dagger$ 
	that acts on the $Q$-coordinate by
	$C_Q$ (resp. $M_Q$ and $\diag{b_Q,\dots,b_Q}$). This gives an operator $U_p \circ C: \mathcal{R}^\dagger \to \mathcal{R}^\dagger$. From \eqref{UP computation type 1}, \eqref{UP computation type 2},
	and \eqref{UP computation type 3} we have
	\begin{align} \label{equation: W is preserved}
		U_p \circ C (\mathcal{O}_{\mathcal{R}}^{con}) \subset \mathcal{O}_{\mathcal{R}}^{con}.
	\end{align}
	Also, by \eqref{equation: a-th root of frobenius} know
	\begin{align}\label{local change of Frobenius eq with a-root}
	\begin{split}
		(bM)^{\nu}\alpha_0 N_0(bM)^{-1}&= C, \\
		(bM)^{\sigma}\alpha N(bM)^{-1}&=C^{\nu^{a-1}+\nu^{a-2}+ \dots + 1}.
	\end{split}
	\end{align} 
	For each $Q$ we have
	\begin{align} \label{local change of frob is eq 1 mod p}
		\begin{split}
		b_Q  &\equiv 1 \mod \gothm, \\
		M_Q &\equiv \diag{u_{Q,0}^{n_{Q,0}}g_0, \dots, u_{Q,a-1}^{n_{Q,a-1}} g_{a-1}} \mod \gothm, 
		\end{split}
	\end{align}
	with $g_j \in \mathbb{F}_q\llbracket u_{Q,j}\rrbracket^\times$.

	\section{Normed vector spaces and Newton polygons}
	\label{section: newton polygons and functional analysis}
	For the convenience of the reader, we recall some definitions and facts
	about Newton polygons and normed $p$-adic vector spaces. 
	Most of what follows is well known (see e.g.
	\cite{Serre-p-adic_banach} or \cite{Monsky-forma_cohomology3}
	for many standard facts on $p$-adic functional analysis). However, we do
	find it necessary to introduce some notation and definitions that are not standard.
	In particular, we introduce
	the notion of a \emph{formal basis}, which allows us to compute Fredholm determinants by estimating columns (in contrast to estimating rows, which is the approach taken in \cite{Adolphson-Sperber-exponential_sums}). 
	
	\subsection{Normed vector spaces and Banach spaces}
	\label{subsection: normed vector spaces and Banach spaces}
	Let $V$ be a vector space over $L$ with a norm $|\cdot|$
	compatible with the $p$-adic norm $|\cdot |_p$ on $L$. We
	will assume that for every $x \in V$ the norm $|x|$
	lies in $|L|_p$, the norm group of $L$. We say $V$
	is a \emph{Banach space} if it is also complete. Let $V_0\subset V$ 
	denote the subset consisting of $x \in V$ satisfying $|x|\leq 
	1$
	and let $\overline{V}=V_0/\gothm V_0$. If $W$ is a subspace 
	of $V$, we
	automatically give $W$ the subspace norm unless otherwise specified. 
	\begin{definition}
		Let $I$ be a set. We let $\mathbf{s}(I)$ denote the set of families
		$x=(x_i)_{i\in I}$ with $x_i \in L$ such that $|x|=\sup\limits_{i\in I}|x_i|_p < \infty$. 
		Then $\mathbf{s}(I)$ is a Banach space with the norm $|\cdot|$. We let
		$\mathbf{c}(I) \subset \mathbf{s}(I)$ be the subspace of families
		with $\lim\limits_{i\in I} x_i = 0$ (note that this agrees with
		$\mathbf{c}(I)$ defined in \cite[\S I]{Serre-p-adic_banach}).  
	\end{definition}
	
	\begin{definition}
		A \emph{formal basis} of $V$ is a subset $G=\{e_{i}\}_{i \in I} \subset V$
		with a norm preserving embedding $V \to \mathbf{s}(I)$
		where $e_i$ gets mapped to the element in $\mathbf{s}(I)$ with
		$1$ in the $i$-coordinate and $0$ otherwise.\footnote{In \cite{kramermiller-padic} we use the term \emph{integral basis}.} We regard $V$ a subspace of $\mathbf{s}(I)$.
	\end{definition}
	\begin{definition}
		An \emph{orthonormal basis} of $V$ is a formal basis $G=\{e_{i}\}_{i \in I} \subset V$ such that 
		$V \subset \mathbf{c}(I)$. This inclusion is an equality if $V$ is a Banach space. By \cite[Proposition I]{Serre-p-adic_banach},
		every Banach space over $L$ has an orthonormal basis. Thus, every Banach space 
		is of the form $\mathbf{c}(I)$. 
	\end{definition}
	
	\begin{example} \label{example: banach space stuff}
		Let $V$ be the Banach space $\mathcal{O}_L\llbracket t\rrbracket\otimes \Q_p$. Then $\{t^n\}_{n \in \Z_{\geq 0}}$
		is a formal basis of $V$ and there is an isomorphism $V \cong \mathbf{s}(\Z_{\geq 0})$.
		By \cite[Lemme I]{Serre-p-adic_banach} any orthonormal basis
		of $V$ reduces to an $\mathbb{F}_q$-basis of $\overline{V}=\mathbb{F}_q\llbracket t\rrbracket$
		and thus must be uncountable.
		The Tate algebra $L\ang{t} \subset V$ is a Banach space,
		which we may identify with $\mathbf{c}(\Z_{\geq 0})$. 
	\end{example}
	
	\subsubsection{Restriction of scalars to $E$}
	Let $I$ be a set. Assume that $V \subset \mathbf{s}(I)$ has $G$
	as a formal basis. We may regard $V$ as a vector space over $E$. Let
	$\zeta_1=1, \zeta_2,\dots,\zeta_a \in \mathcal{O}_L$ be elements
	that reduce to a basis of $\mathbb{F}_q$ over $\mathbb{F}_p$ modulo $\pi_\circ$ and set
	$I_E=I \times \{1,\dots, a\}$. We define 
	\begin{align*}
	G_E &= \{ \zeta_j e_i\}_{(i,j)\in I_E}.
	\end{align*}
	Note that $G_E$ is a formal basis of $V$ over $E$. 

	\subsection{Completely continuous operators and Fredholm determinants}
	\label{subsection: nuclear operators, etc.}
	
	\subsubsection{Completely continuous operators}
	\label{subsubsection: Completely continuous operators}
	Let $V$ be a vector space over $L$ with norm $|\cdot|$. Let $G=\{e_{i}\}_{i \in I}$
	be a formal basis of $V$. We assume $I$ is a countable set.
	Let $u:V \to V$ (resp. $v: V \to V$) be an
	$L$-linear (resp. $E$-linear) operator. Let $(n_{i,j})$
	be the matrix of $u$ with respect to the basis
	$G$. 

	\begin{definition}
		For $i \in I$, we define $\textbf{row}_i(u,G)=\inf\limits_{j \in I} v_p(n_{i,j})$
		and $\textbf{col}_i(u,G)=\inf\limits_{j \in I} v_p(n_{j,i})$. That is,
		$\mathbf{row}_i(u,G)$ (resp. $\mathbf{col}_i(u,G)$) is the smallest $p$-adic
		valuation that occurs in the $i$-th row (resp. column) of the matrix of $u$.
		Note that $\textbf{col}_i(u,G)=\log_p|u(e_i)|$.
	\end{definition}

	\begin{definition}  \label{definition: completely continuous}
		Assume that $V=\mathbf{c}(I)$. We say that $u$ is \emph{completely continuous} if it
		is the $p$-adic limit of $L$-linear operators 
		with finite dimensional image. This is equivalent to $\lim\limits_{i \in I} \mathbf{row}_i(u,G) = \infty$
		(see \cite[Theorem 6.2]{Monsky-padic_notes}). We make
		the analogous definition for $v$.
	\end{definition}

	\subsubsection{Fredholm determinants}\label{subsubsection: fredholm determinants}
	We continue with the notation from \S \ref{subsubsection: Completely continuous operators}.
	We define the \emph{Fredholm determinant} of $u$ with respect to $G$ to be the formal sum
	\begin{align} \label{Fredholm definition}
	\begin{split}
	\det(1-su| G) &= \sum_{n=0}^\infty c_ns^n, \\
	c_n &= (-1)^n \sum_{\stackrel{S \subset I}{|S|=n}} 
	\sum_{\sigma \in \Sym(S)} \text{sgn}(\sigma) \prod_{i \in S} n_{i,\sigma(i)}.
	\end{split}
	\end{align}
	We define the Fredholm determinant $\det\limits_E(1-sv|G_E)$ in
	an analogous manner using the matrix of $v$ with respect to $G_E$.
	Note that there is no reason a priori for the sums $c_n$ to converge. 
	We will say that $\det(1-su|G)$ is \emph{well-defined} if each $c_n$ converges.
	
	\begin{lemma} \label{lemma: completely continuous does not depend on basis}
		Assume $V$ is a Banach space with orthonormal basis $G$ and that $u$ is completely continuous. 
		Then $\det(1-su|G)$ is well-defined and is an entire function in $s$. Furthermore, if $G'$
		is another orthonormal basis of $V$ we have $\det(1-su|G)=\det(1-su|G')$.
		The analogous result holds for $v$.
	\end{lemma}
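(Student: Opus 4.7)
The strategy is to handle convergence of the coefficients, entirety of the resulting power series, and basis invariance in turn, with the row-valuation characterization of complete continuity doing most of the work. I would begin by bounding the individual terms of $c_n$ in \eqref{Fredholm definition}: each summand $\mathrm{sgn}(\sigma)\prod_{i \in S} n_{i,\sigma(i)}$ has $p$-adic valuation at least $\sum_{i \in S}\textbf{row}_i(u,G)$. Since $u$ is completely continuous, $\textbf{row}_i(u,G)\to\infty$, so only finitely many $n$-element subsets $S$ contribute terms of valuation below any prescribed bound, and $c_n$ converges in $L$.

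For entirety, I would order the row valuations as $r_1\leq r_2\leq\cdots\to\infty$ and use $v_p(c_n)\geq r_1+r_2+\cdots+r_n$. Since the averages $(r_1+\cdots+r_n)/n$ tend to $\infty$, we obtain $v_p(c_n)/n\to\infty$, so $\sum c_n s^n$ has infinite radius of convergence.

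Basis independence is the substantive point and, I expect, the main obstacle. The cleanest route is to reinterpret $c_n$ as an intrinsic trace: the formula in \eqref{Fredholm definition} is, up to sign, the trace of $\wedge^n u$ on the $n$-fold exterior power of $V$, and the existence and basis invariance of this trace for completely continuous $u$ is the content of Serre's theory of nuclear operators. A more hands-on route is approximation by finite rank operators $u_N$ obtained by projecting $u$ onto the span of an initial segment of $G$. Each $u_N$ has a Fredholm determinant that is a polynomial coinciding with the characteristic polynomial of a finite-dimensional restriction, and the row-valuation estimates force $u_N\to u$ in operator norm together with coefficient-wise convergence $c_n(u_N)\to c_n(u)$. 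Repeating the construction with a second orthonormal basis $G'$ and identifying the finite-dimensional characteristic polynomials via the trace interpretation yields $\det(1-su|G)=\det(1-su|G')$ in the limit.

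Finally, the statement for the $E$-linear operator $v$ follows verbatim by applying all of the above to $V$ regarded as an $E$-Banach space equipped with the orthonormal basis $G_E$ constructed in the restriction-of-scalars discussion; completeness of $V$ over $L$ implies completeness over $E$, and $G_E$ is an orthonormal basis by construction, so the same convergence, growth, and basis-independence estimates apply.
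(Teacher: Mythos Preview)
Your argument is correct and is essentially the standard one; in fact the paper does not give a proof at all but simply cites \cite[Proposition~7]{Serre-p-adic_banach}, which contains exactly the row-valuation estimate, the entirety bound $v_p(c_n)\geq r_1+\cdots+r_n$, and the basis-independence via the trace interpretation that you outline. So your proposal is more detailed than the paper's own treatment, but it is reconstructing the same argument from the cited reference rather than offering an alternative route.
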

	\begin{proof}
		See \cite[Proposition 7]{Serre-p-adic_banach}.
	\end{proof}
	\begin{definition}
		Continue with the notation from Lemma \ref{lemma: completely continuous does not depend on basis}. 
		We let $\det(1-su|V)$ denote the Fredholm determinant $\det(1-su|G)$. By Lemma \ref{lemma: completely continuous does not depend on basis} this determinant does not depend on our choice of orthonormal basis.
		We define $\det\limits_E (1-sv|V)$ similarly.
	\end{definition}
	
	\subsubsection{Newton polygons of operators}
	\begin{definition}
		Let $*$ be either $p$ or $q$. Let $f(t)=\sum a_nt^n \in L\langle t\rangle^\times$
		be an entire function. We
		define the $*$-adic Newton polygon $NP_*(f)$ to be the lower convex hull of the points $(n,v_*(a_n))$. For $r>0$, we
		let $NP_*(f)_{<r}$ denote the ``sub-polygon'' of $NP_*(f)$ consisting of all segments whose
		slope is less than $r$. 
	\end{definition}
	\begin{definition}
		Adopt the notation from \S \ref{subsubsection: fredholm determinants}. Assume that
		$\det(1-su|G)$ (resp. $\det\limits_E(1-sv|G_E)$) is well-defined and an entire function in $s$.
		Then we define $NP_*(u|G)$ (resp. $NP_*(v|G_E)$) to be $NP_*(\det(1-su|G))$ (resp. $NP_*(\det\limits_E(1-sv|G_E))$).
		Further assume that $V$ is a Banach space and $u$ (resp. $v$) is completely continuous. Then by
		Lemma \ref{lemma: completely continuous does not depend on basis}, the Fredholm determinant does not depend
		on the choice of orthonormal basis, so we define $NP_*(u|V)$ (resp. $NP_*(v|V)$) to be $NP_*(u|G)$ (resp.
		$NP_*(v|G_E)$). 
	\end{definition}
	\begin{definition}
		Let $d \in \Z_{\geq 1} \cup \infty$ and let $A=\{c_n\}^{d}_{n\geq 1}$ be a non-decreasing sequence of real numbers. If $d=\infty$ we will make the assumption that $\lim\limits_{n \to \infty} c_n = \infty$. Let $P_A$ be the ``polygon''
		of length $d$ consisting of vertices $(0,0),(1,c_1),(2,c_1+c_2), \dots$. We write
		\begin{align*}
			NP_*(f) &\succeq A
		\end{align*}
		if the polygon $NP_*(f)$ lies above $P_A$ at every $x$-coordinate where both are defined. 
	\end{definition}

	\noindent The following lemma allows us to bound $NP_p(v|G_E)$ by estimating the columns of the matrix representing $v$.
	\begin{lemma} \label{lemma: estimating NP by estimating columns}
		Assume that $\lim\limits_{i \in I} \mathbf{col}_{(i,1)}(v,G_E) = \infty$.
		If $v$ is $\nu^{-1}$-semilinear, then the Fredholm determinant $\det(1-sv|G_E)$ is well-defined and we have
		\begin{align*}
		NP_p(v|G_E) &\succeq  \{\mathbf{col}_{(i,1)}(v,G_E)\}^{\times a}_{i \in I},
		\end{align*}
		where the superscript $\times a$ means each slope is repeated $a$ times.
	\end{lemma}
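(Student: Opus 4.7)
The plan is to exploit the $\nu^{-1}$-semilinearity to show that the columns of $v$ in $G_E$ organize into blocks of size $a$ all having the same valuation, and then use the standard column-estimate bound on the coefficients of a Fredholm determinant. First, for each $i \in I$ and $1 \leq j \leq a$, the semilinearity gives $v(\zeta_j e_i) = \nu^{-1}(\zeta_j)\,v(e_i)$. Since $\zeta_j \in \mathcal{O}_L^\times$, we have $\nu^{-1}(\zeta_j) \in \mathcal{O}_L^\times$ as well, so multiplying by it preserves the norm. Reading off the column valuation then yields
\[
\mathbf{col}_{(i,j)}(v, G_E) \;=\; \mathbf{col}_{(i,1)}(v, G_E)
\]
for every $j$. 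Combined with the hypothesis $\lim_{i \in I} \mathbf{col}_{(i,1)}(v, G_E) = \infty$, this shows that the column valuations tend to infinity over the full index set $I_E$.

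Next, I would estimate each Fredholm coefficient $c_n$ directly from its defining expression. Write $v(e_k) = \sum_\ell n_{\ell, k}\, e_\ell$ in the basis $G_E$, so that $v_p(n_{\ell, k}) \geq \mathbf{col}_k(v, G_E)$ for every $\ell$. For a finite subset $S \subset I_E$ with $|S| = n$ and a permutation $\sigma \in \mathrm{Sym}(S)$, each factor of $\prod_{k \in S} n_{k, \sigma(k)}$ contributes valuation at least $\mathbf{col}_{\sigma(k)}(v, G_E)$, and since $\sigma$ permutes $S$, we get
\[
v_p\!\left( \prod_{k \in S} n_{k, \sigma(k)} \right) \;\geq\; \sum_{k \in S} \mathbf{col}_{\sigma(k)}(v, G_E) \;=\; \sum_{k \in S} \mathbf{col}_k(v, G_E).
\]
Because the column valuations tend to infinity on $I_E$, only finitely many $S$ produce a term above any fixed truncation precision, so each $c_n$ is a convergent $p$-adic sum and
\[
v_p(c_n) \;\geq\; \inf_{|S| = n} \sum_{k \in S} \mathbf{col}_k(v, G_E),
\]
which is the sum of the $n$ smallest column valuations in the multiset $\{\mathbf{col}_k(v,G_E)\}_{k \in I_E}$. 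This confirms both that $\det(1 - sv \mid G_E)$ is well-defined and, since these bounds grow without bound as $n \to \infty$, that it is entire.

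Finally, I would assemble the Newton polygon inequality. By the block structure from the first step, the multiset $\{\mathbf{col}_k(v, G_E)\}_{k \in I_E}$ is exactly $\{\mathbf{col}_{(i,1)}(v, G_E)\}^{\times a}_{i \in I}$. The polygon $P_A$ bounding the assertion has its $n$-th vertex at $(n, c_1 + \cdots + c_n)$, where $c_1 \leq c_2 \leq \cdots$ is this sorted multiset. The previous inequality says precisely that the point $(n, v_p(c_n))$ lies on or above $(n, c_1 + \cdots + c_n)$ for every $n$, so the lower convex hull $NP_p(v \mid G_E)$ lies above $P_A$, as required. I do not anticipate a genuine obstacle here; the only point requiring any care is ensuring that each permutation truly contributes an independent factor from each relevant column, which is immediate because a permutation of $S$ visits every index in $S$ exactly once.
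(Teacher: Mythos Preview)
Your proof is correct and follows exactly the same approach as the paper's: use $\nu^{-1}$-semilinearity to show $\mathbf{col}_{(i,j)}(v,G_E)=\mathbf{col}_{(i,1)}(v,G_E)$ for all $j$, then bound each $c_n$ from the definition of the Fredholm determinant by the sum of the $n$ smallest column valuations. The paper's proof is simply a terse version of yours; the only cosmetic issue is that you use $c_n$ both for the Fredholm coefficients and for the sorted slope sequence, which you should disambiguate.
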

	\begin{proof}
		Note that $v(\zeta_j e_i)=\zeta_j^{\nu^{-1}}v(e_i)$, which implies
		 $\mathbf{col}_{(i,j)}(v,G_E)=\mathbf{col}_{(i,1)}(v,G_E)$ for each $j$.
		 In particular, $\lim\limits_{(i,j) \in I_E} \mathbf{col}_{(i,j)}(v,G_E) = \infty$, so $\det\limits_E(1-sv|G_E)$ is well-defined. By the definition of $c_n$ in \eqref{Fredholm definition} we see that 
		 \begin{align*}
			 NP_p(v|G_E) &\succeq \{\mathbf{col}_{(i,j)}(v,G_E)\}_{(i,j) \in I_E} \\
			 &=\{\mathbf{col}_{(i,1)}(v,G_E)\}^{\times a}_{i \in I}.
		 \end{align*}
	\end{proof}

\subsubsection{Computing Newton polygons using $a$-th roots}
When estimating the Newton polygon of an $L$-linear completely continuous operator $u$ on $V$, it is convenient
to work with an $E$-linear operator $v$ that is an $a$-th root of $u$. The reason
is that we can translate $p$-adic bounds on $\det\limits_E(1-sv|V)$ to $q$-adic bounds on $\det(1-su|V)$.

\begin{lemma} \label{lemma: a-th root lemma}
	Let $V$ be a Banach space. 
	Let $v$ be a completely continuous $E$-linear operator on $V$ and let $u=v^a$. 
	Assume that $u$ is $L$-linear.
	We further assume that $\det(1-su|V)$ has coefficients in $E$ (a priori its
	coefficients could lie in $L$). Let $\frac{1}{a}NP_p(v|V)$ denote
	the polygon where both the $x$-coordinates and $y$-coordinates of the points
	in $NP_p(v|V)$ are scaled by a factor of $\frac{1}{a}$. 
	Then
	$NP_q(u|V)=\frac{1}{a}NP_p(v|V)$.
\end{lemma}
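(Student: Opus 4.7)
The plan is to exploit the operator identity $1-s^au = \prod_{\zeta^a=1}(1-\zeta sv)$, which follows from $u=v^a$ together with the polynomial factorization $1-X^a = \prod_{\zeta^a=1}(1-\zeta X)$. Working over the cyclotomic extension $E(\zeta_a)$ and invoking multiplicativity of the Fredholm determinant for commuting completely continuous operators in the framework of \cite{Serre-p-adic_banach}, I would obtain the identity
\begin{align*}
\det{}_E(1-s^au|V) &= \prod_{\zeta^a=1}f(\zeta s), \qquad f(s) := \det{}_E(1-sv|V),
\end{align*}
where both sides lie in $E\llbracket s\rrbracket$ by Galois invariance under $\mathrm{Gal}(E(\zeta_a)/E)$.

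The second step is to convert $\det_E$ into $\det_L$ using the hypothesis that $\det(1-su|V) \in E\llbracket s\rrbracket$. In finite dimensions, this hypothesis forces the multiset of $L$-eigenvalues of $u$ to be $\mathrm{Gal}(L/E)$-stable, which yields the identity $\det_E(1-sT) = \det_L(1-sT)^a$ (each Galois orbit is counted $a$ times in the $E$-characteristic polynomial). Extending this to the completely continuous setting via approximation by finite-rank operators gives
\begin{align*}
\det(1-s^au|V)^a &= \prod_{\zeta^a=1}f(\zeta s).
\end{align*}

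The final step extracts Newton polygons. Each $f(\zeta s)$ has the same $p$-adic Newton polygon as $f$ since $v_p(\zeta)=0$, so the right side has $p$-adic slope multiset equal to $a$ disjoint copies of that of $NP_p(v|V)$. The $a$-th power on the left repeats each slope $a$ times, so after canceling the factor of $a$ one obtains equality of slope multisets for $NP_p(\det(1-s^au|V))$ and $NP_p(v|V)$. Now $NP_p(\det(1-s^au|V))$ is the horizontal stretching of $NP_p(\det(1-su|V))$ by factor $a$ (from $s\mapsto s^a$), while $NP_q(\det(1-su|V))$ is the vertical scaling of $NP_p(\det(1-su|V))$ by $1/a$ (since $v_q=v_p/a$); a direct comparison of coordinates then yields $NP_q(u|V) = \tfrac{1}{a}NP_p(v|V)$.

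The main technical obstacle is rigorously justifying both the multiplicativity of the Fredholm determinant for the $a$-fold product of commuting factors $1-\zeta sv$ and the identity $\det_E=\det_L^a$ in the Banach setting. Both reduce to the finite-dimensional case by approximating $v$ with finite-rank $E$-linear operators as in \S\ref{subsection: nuclear operators, etc.} and passing to the limit; the assumption that the coefficients of $\det(1-su|V)$ lie in $E$ is a closed condition preserved under such limits.
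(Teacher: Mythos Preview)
Your argument is correct and is precisely the standard approach the paper alludes to: the paper gives no self-contained proof, instead citing \cite{Bombieri-exponential_sums}, \cite{Adolphson-Sperber-exponential_sums}, and \cite[Lemma 6.25]{kramermiller-padic}, all of which proceed via the factorization $\det_E(1-s^au)=\prod_{\zeta^a=1}\det_E(1-\zeta sv)$ together with the norm identity $\det_E(1-su)=\prod_{\sigma\in\mathrm{Gal}(L/E)}\det_L(1-su)^\sigma$ and then the Newton polygon bookkeeping you describe. The only refinements I would make are cosmetic: for identity (B) it is cleaner to first prove $\det_E(1-sT)=\prod_\sigma\det_L(1-sT)^\sigma$ for \emph{arbitrary} $L$-linear completely continuous $T$ (by approximating $T$, not $v$, with $L$-linear finite-rank operators) and only afterwards invoke the hypothesis $\det_L(1-su|V)\in E\llbracket s\rrbracket$ to collapse the product to an $a$-th power---this sidesteps the worry that finite-rank approximants of $v$ need not have $L$-linear $a$-th powers.
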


\begin{proof}
	Some version of this lemma is present in most papers proving ``Hodge bounds'' for
	exponential sums (see e.g. \cite{Bombieri-exponential_sums} or \cite{Adolphson-Sperber-exponential_sums}). The proof of
	\cite[Lemma 6.25]{kramermiller-padic} is easily adapted to our situation. 
\end{proof}

	\section{Finishing the proof of Theorem \ref{main theorem}}

	\label{section: proof of theorems}

	\subsection{The Monsky trace formula}
	\label{subsection: MW trace formula}
	Let us recall the Monsky trace formula
	in the case of curves. For a complete treatment see \cite{Monsky-forma_cohomology3} 
	or \cite[\S 10]{Wan-higher_rank_dwork_conjecture}.  
	Let
	$\Omega_{\mathcal{B}^\dagger}^i$ denote the space of
	$i$-forms of $\mathcal{B}^\dagger$ (see \cite[\S 4]{Monsky-forma_cohomology3}).
	The map $\sigma$ induces a map $\sigma_i: \Omega^i_{\mathcal{B}^\dagger} \to 
	\Omega^i_{\mathcal{B}^\dagger}$
	sending $xdy$ to $x^\sigma d(y^\sigma)$. As in
	\cite[\S 3]{van_der_Put-MW_cohomology}, there exist trace maps
	$
	\text{Tr}_i: \Omega_{\mathcal{B}^\dagger}^i \to 
	\sigma(\Omega_{\mathcal{B}^\dagger}^i)$.
	Let $\Theta_i$ denote the map $\sigma_i^{-1} \circ \text{Tr}_i$.
	For $\omega \in \Omega^1_{\mathcal{B}^\dagger}$ and $x \in \mathcal{B}^\dagger$ we have
	\begin{align} \label{Dwork operator property on forms}
	\Theta_1(x\omega^\sigma) &= \Theta_0(x)\omega.
	\end{align}	
	\noindent Consider the $L$-function
	\begin{align} \label{introduction of L-function:2}
	L(\rho^{wild}\otimes \chi^{\otimes p^j},V,s)&= \prod_{x \in V} \frac{1}{1 - \rho^{wild}\otimes \chi^{\otimes p^j}(Frob_x) s^{\deg(x)}},
	\end{align}
	which is a slight modification of \eqref{introduction of L-function}. 
	Fix a tuple $\mathbf{r}=(r_Q)_{Q \in W}$ of positive rational numbers.
	Monsky shows that if the $r_Q$'s are sufficiently small (so $\mathcal{B}(0,\mathbf{r}]$
	consists of functions with sufficiently small radius of overconvergence), the operator
	$\Theta_i \circ \alpha f^{-\Gamma_j}$ is completely continuous on $\Omega_{\mathcal{B}(0,\mathbf{r}]}^i$.
	The Monsky trace formula states
	\begin{align} \label{Main Monsky-Washnitzer}
	L(\rho^{wild}\otimes \chi^{\otimes p^j},V,s) &= \frac{\det(1-s\Theta_1 \circ \alpha f^{-\Gamma_j}| \Omega_{\mathcal{B}(0,\mathbf{r}]}^1)}
	{\det(1-s\Theta_0 \circ \alpha f^{-\Gamma_j}| \mathcal{B}(0,\mathbf{r}])},
	\end{align}
	where $\alpha$, $f$, and $\Gamma_j$ are as in \S \ref{subsubsection: global frobenius structure tame}-\ref{subsubsection: the local frob and Up}.
	Thus, we may estimate $L(\rho^{wild}\otimes \chi^{\otimes p^j},V,s)$ by
	estimating operators on the space of $1$-forms and $0$-forms.
	
	In our situation we may simplify \eqref{Main Monsky-Washnitzer}. 
	The map $\mathcal{A}^\dagger \to \mathcal{B}^\dagger$ is \'etale,
	which implies $\Omega_{\mathcal{B}^\dagger} = \pi^* \Omega_{\mathcal{A}^\dagger}$. 
	Since
	$\Omega_{\mathcal{A}^\dagger}=\mathcal{A}^\dagger \frac{dt}{t}$, we see that 
	$\Omega_{\mathcal{B}^\dagger} = \mathcal{B}^\dagger \frac{dt}{t}$. In particular, we have
	$\Omega_{\mathcal{B}(0,\mathbf{r}]} = \mathcal{B}(0,\mathbf{r}] \frac{dt}{t}$.
	Also, since $\frac{dt}{t}= \frac{1}{q}(\frac{dt}{t})^\sigma$ we know by
	\eqref{Dwork operator property on forms} that
	$\Theta_1\Big (x\frac{dt}{t}\Big ) =\frac{1}{q} \Theta_0(x) \frac{dt}{t}$. Thus, we have $\Theta_1=U_q$ and $\Theta_0=qU_q$. Then \eqref{Main 
		Monsky-Washnitzer}
	becomes
	\begin{align} \label{MW via characteristic}
	L(\rho^{wild}\otimes \chi^{\otimes p^j},V,s) &= \frac{\det(1-sU_q \circ \alpha f^{-\Gamma_j}| \mathcal{B}(0,\mathbf{r}])}
	{\det(1-sqU_q \circ \alpha f^{-\Gamma_j}| \mathcal{B}(0,\mathbf{r}])}.
	\end{align}
	As $\det(1-sU_q \circ \alpha f^{-\Gamma_j}| \mathcal{B}(0,\mathbf{r}]) \in 
	1+s\mathcal{O}_L\llbracket s\rrbracket$, we know $\frac{1}{\det(1-sqU_q \circ \alpha f^{-\Gamma_j}| \mathcal{B}(0,\mathbf{r}])}$ lies in $ 1+qs\mathcal{O}_L\llbracket qs\rrbracket$.
	This means each slope of $NP_q\Big(\frac{1}{\det(1-sqU_q \circ \alpha f^{-\Gamma_j}| \mathcal{B}(0,\mathbf{r}])}\Big)$ is at least one. In particular, we have
	\begin{align*} 
	NP_q(L(\rho^{wild}\otimes \chi^{\otimes p^j},V,s))_{<1} &= NP_q(U_q \circ \alpha f^{-\Gamma_j}| \mathcal{B}(0,\mathbf{r}])_{<1}.
	\end{align*}
	Note that $\rho$ and $\rho^{wild}\otimes \chi^{\otimes p^j}$ are Galois conjugates. Thus, $L(\rho,V,s)$ and $L(\rho^{wild}\otimes \chi^{\otimes p^j},V,s)$ are Galois conjugates.
	This gives
	\begin{align} \label{equation: L function estimate comes from U_p}
	\begin{split}
		NP_q(L(\rho,V,s))_{<1} &= \frac{1}{a} NP_q(L(\rho^{wild}\otimes\bigoplus_{j=0}^{a-1} \chi^{\otimes p^j},V,s))_{<1} \\
		&= \frac{1}{a} NP_q(U_q \circ \alpha N| \bigoplus_{j=0}^{a-1}\mathcal{B}(0,\mathbf{r}])_{<1},
	\end{split}
	\end{align}
	where $N$ is the dual Frobenius structure from Proposition \ref{proposition: Global Tame Frobenius structure}.
	
	\subsection{Estimating $NP_q(U_q \circ \alpha N| \bigoplus\limits_{j=0}^{a-1}\mathcal{B}(0,\mathbf{r}])$}
	\label{subsection: estimating NP_p}
	In this subsection we estimate the $q$-adic Newton polygon of $U_q\circ \alpha N $
	acting on $ \bigoplus\limits_{j=0}^{a-1} \mathcal{B}(0,\mathbf{r}]$.

	\begin{proposition}  \label{proposition: NP bounds with operator assumption1}
		We have
		\begin{align*}
		\frac{1}{a}NP_q(U_q \circ \alpha N| \bigoplus\limits_{j=0}^{a-1}\mathcal{B}(0,\mathbf{r}])_{<1} &\succeq \big 
		\{\underbrace{0,\dots,0}_{g-1+r_0+r_1+r_\infty-\Omega_\rho}
		\big \}
		\bigsqcup \Bigg ( \bigsqcup_{i=1}^{\mathbf{m}} S_{\tau_i} \Bigg ),
		\end{align*}
		where $S_{\tau_i}$ is the slope-set defined in \S \ref{subsection: statement of main results}
		and $r_*$ is the cardinality of $\eta^{-1}(*)$ defined in \S \ref{subsection: basic setup}.
	\end{proposition}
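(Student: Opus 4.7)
The plan is to prove the estimate in three stages, following the outline the paper has laid out in the introduction and prepared for in the preceding sections.

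First, I would apply Lemma~\ref{lemma: a-th root lemma} after exhibiting $U_p\circ\alpha_0 N_0$ as an $a$-th root of $U_q\circ \alpha N$ on $V:=\bigoplus_{j=0}^{a-1}\mathcal{B}(0,\mathbf{r}]$. Induction on $k$, using the semilinearity identity $U_p(h^{\nu^k}y)=hU_p^k(y)$, gives
\[
(U_p\circ\alpha_0N_0)^k(x)=U_p^k\bigl((\alpha_0N_0)^{\nu^{k-1}+\dots+1}x\bigr),
\]
so at $k=a$ we recover $U_q\circ\alpha N$ because $\alpha=\prod_{i=0}^{a-1}\alpha_0^{\nu^i}$ and $N=N_0^{\nu^{a-1}+\dots+1}$ by Proposition~\ref{proposition: Global Tame Frobenius structure}. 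Thus it suffices to show that $NP_p(U_p\circ\alpha_0N_0\mid V)$, rescaled by $\frac{1}{a^2}$, dominates the polygon on the right-hand side of the claim.

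Second, I would transfer the computation from $V$ to a subspace of $\mathcal{R}^\dagger$ by expanding at the points of $W$ via \eqref{overconvergent expansion diagram}, and then conjugate by $bM$. Equation~\eqref{local change of Frobenius eq with a-root} gives $(bM)^\nu (\alpha_0 N_0)(bM)^{-1}=C$, and combined again with $U_p(h^\nu z)=hU_p(z)$ this yields
\[
(bM)\circ(U_p\circ\alpha_0N_0)\circ(bM)^{-1}=U_p\circ C.
\]
Setting $V':=(bM)V\subset\mathcal{R}^\dagger$, the operators $U_p\circ\alpha_0N_0$ on $V$ and $U_p\circ C$ on $V'$ are conjugate, hence have identical Fredholm determinants and Newton polygons.

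Third, I would build a formal basis of $V'$ compatible with the splitting $\mathcal{R}=\mathcal{R}^{trun}\oplus\ker(pr)$ from \S\ref{paragraph: global to semi-local setup}, and then estimate columns. By \eqref{equation: projecting onto tails ends up in W} we have $V'\cap\ker(pr)\subset\mathcal{O}_{\mathcal{R}}^{con}$; on this piece the stability \eqref{UP computation type 1}--\eqref{UP computation type 3} forces $U_p\circ C$ to send each basis vector into $\mathcal{O}_{\mathcal{R}}^{con}$, giving column valuations $\geq 0$. For the finite-dimensional complement mapping isomorphically onto the image $pr(V')\subset\mathcal{R}^{trun}$, I would pick basis vectors of the form $\pi_{as_{\tau_i}}^{q(\mathbf{e}_{\tau_i},j)}\pi_{s_{\tau_i}}^{pn}u_{\tau_i,j}^{-n}$ at each wildly ramified $\tau_i$ (and the analogues for tamely ramified $\tau_i$ and for $\eta(Q)=1$). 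The right-hand sides of \eqref{UP computation type 1}--\eqref{UP computation type 3} then give column valuations of exactly $\tfrac{n}{s_{\tau_i}}-\tfrac{\omega_{\tau_i}}{as_{\tau_i}(p-1)}$ for $n=1,\dots,s_{\tau_i}$, matching the multi-set $S_{\tau_i}$. Lemma~\ref{lemma: estimating NP by estimating columns} then promotes these column bounds to the multi-set $\{\textbf{col}_i\}^{\times a}$ in $NP_p(U_p\circ C\mid V')$, and the rescaling by $\frac{1}{a^2}$ recovers each $S_{\tau_i}$ with multiplicity one in the target.

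The main obstacle is the dimension count for the slope-zero band. The twist by $M$ deforms the local expansion at each $Q\in W$ by the pole order $n_{Q,j}$, so enumerating slope-zero basis vectors of $V'$ becomes a Riemann--Roch computation on $\mathbf{X}$ for a divisor class built from the $n_{Q,j}$. Combining the Riemann--Hurwitz identity~\eqref{riemann-hurwitz eq} with the arithmetic identities~\eqref{equation: sum of nonexponent divisor} and~\eqref{equation: sum of nonexponent divisor 2} should produce exactly $a(g-1+r_0+r_1+r_\infty-\Omega_\rho)$ slope-zero basis vectors across the $a$ components of $V'$; it is precisely the second of these identities that injects the invariant $\Omega_\rho$ into the count. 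Checking that the subspace of $V'$ I have described is genuinely a formal basis (rather than just a spanning set), and that the slope-zero tally is not distorted by contributions from the unramified auxiliary points in $\eta^{-1}(1)$, is the delicate combinatorial step on which the rest of the argument rests.
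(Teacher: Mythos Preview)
Your approach is essentially the paper's own: twist by $bM$, take the $a$-th root via Lemma~\ref{lemma: a-th root lemma}, split $V'$ along $pr$, count the kernel by Riemann--Roch (this is exactly Proposition~\ref{local to global: kernel and cokernel} in the paper), and bound columns using \eqref{UP computation type 1}--\eqref{UP computation type 3} together with Lemma~\ref{lemma: estimating NP by estimating columns}.

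Two points to tighten. First, you have the dimensions reversed: it is $\ker(pr|_{V'})$ that is finite-dimensional of dimension $a(g-1+r_0+r_1+r_\infty-\Omega_\rho)$, while the complement (the piece mapping isomorphically onto $pr(V')=\mathcal{O}_{\mathcal{R}^{trun}}$) is infinite-dimensional, indexed by all $(n,Q,j)$ with $n\geq\mu(Q)$. Your description of the column bounds only for $n=1,\dots,s_{\tau_i}$ is what survives after imposing the slope-$<1$ cutoff; the remaining infinitely many columns (large $n$, tamely ramified $Q$ with $\eta(Q)\in\{0,\infty\}$, and all $Q$ with $\eta(Q)=1$) must still be shown to have column valuation $\geq 1$, which is precisely why the paper introduces the auxiliary numbers $\mathfrak{s}_Q$ in \eqref{equation: false wild ramification number} and checks cases (II)--(III) separately. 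Second, the scaled vectors $\pi_{as_Q}^{q(\mathbf{e}_Q,j)}\pi_{s_Q}^{pn}u_{Q,j}^{-n}$ are \emph{not} a formal basis of $V'$; they are a formal basis of the differently-normed space $V^{con}=(\mathcal{O}_\mathcal{R}^{con}\cap\widehat{V})\otimes\mathbb{Q}_p$. The paper bridges this with Proposition~\ref{proposition: compute with Bcon}: an orthonormal basis of $V'$ and the scaled basis $G^{con}$ differ by a diagonal change of coordinates, so the matrices of $U_p\circ C$ in the two bases are conjugate and the Fredholm determinants agree. You should make this step explicit.
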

	\noindent We break the proof up into several steps. 
	
	\paragraph{The twisted space and the $a$-th root.}
	We view $ \bigoplus\limits_{j=0}^{a-1} \widehat{\mathcal{B}}$ and $ \bigoplus\limits_{j=0}^{a-1} \mathcal{B}(,\mathbf{r}]$ as subspaces of $\mathcal{R}$,
	as described in \S \ref{paragraph: global to semi-local setup}.
	Unfortunately, the global Frobenius structure $\alpha N$ will not have nice growth
	properties like the local Frobenius structures studied in \S \ref{subsection: local rank one crystals}. Instead, we have to ``twist'' this subspace using the matrices
	$bM$ defined in \S \ref{paragraph: global to semi-local setup}. 
	Define the spaces
	\begin{align*}
		\widehat{V}&=bM\Big(  \bigoplus\limits_{j=0}^{a-1} \widehat{\mathcal{B}}\Big),\\
		V&=bM\Big(  \bigoplus\limits_{j=0}^{a-1} \mathcal{B}(0,\mathbf{r}]\Big),
	\end{align*}
	which we regard as subspaces of $\mathcal{R}$.
	After decreasing $\mathbf{r}$ we may assume that
	\begin{align}\label{eq: growth is the same before and after the twist}
		V &= \widehat{V} \bigcap \bigoplus_{Q \in W} \bigoplus_{j=0}^{a-1} \mathcal{E}_Q(0,r_Q]. 
	\end{align}
	In fact, \eqref{eq: growth is the same before and after the twist} holds as long as $bM$,
	viewed as a matrix with elements in $\bigoplus\limits_{Q \in W} \mathcal{E}_Q^{\dagger}$, has entries contained
	in $\bigoplus\limits_{Q \in W} \mathcal{E}_Q(0,r_Q]$.
	From \eqref{local change of Frobenius eq with a-root} we know that
	$U_q \circ C^{\nu^{a-1}+ \dots + 1}$ and $U_p \circ C$
	act on $V$.
	Since $U_q \circ C^{\nu^{a-1}+ \dots + 1}=(U_p \circ C)^a$, 
	Lemma \ref{lemma: a-th root lemma} tells us that
	\begin{align}\label{equation: twisting space}
	\begin{split}
		NP_q(U_q\circ \alpha N|\bigoplus\limits_{j=0}^{a-1} \mathcal{B}(0,\mathbf{r}])&=NP_q(U_q \circ C^{\nu^{a-1}+ \dots + 1}|V) \\
		&= \frac{1}{a} NP_p(U_p \circ C|V).
	\end{split}
	\end{align}

	\begin{proposition} \label{local to global: kernel and cokernel}
		The following hold:
		\begin{enumerate}
			\item We have $pr(\widehat{V}_0)= \mathcal{O}_{\mathcal{R}^{trun}}$, where $pr$ is the projection map
			defined in \S \ref{paragraph: global to semi-local setup}.
			\item Both $\ker(pr:V \to \mathcal{R}^{trun})$ and
			$\ker(pr:\widehat{V} \to \mathcal{R}^{trun})$ have dimension $a(g-1+r_0+r_1+r_\infty-\Omega_\rho)$ as vector
			spaces over $L$.
		\end{enumerate}
	\end{proposition}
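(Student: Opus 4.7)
My plan is to reduce both parts to classical statements on the smooth proper curve $X$ by working modulo $\mathfrak{m}$, and then to promote them integrally. The key structural fact, from \eqref{local change of frob is eq 1 mod p}, is that at each $(Q,j)$ the twist $bM$ reduces modulo $\mathfrak{m}$ to multiplication by a unit times $u_{Q,j}^{n_{Q,j}}$. Consequently, for $\mathbf{y}=(y_0,\ldots,y_{a-1})\in\bigoplus_j\widehat{\mathcal{B}}$ with reduction $\bar{\mathbf{y}}\in\bigoplus_j\bar{\mathcal{B}}=\bigoplus_j\Gamma(V,\mathcal{O}_V)$, the condition $pr(bM\mathbf{y})\equiv 0\pmod{\mathfrak{m}}$ translates at each $Q$ into the pole bound $v_{u_Q}(\bar y_j)\geq -d_{Q,j}$, where $d_{Q,j}$ packages $n_{Q,j}$ together with (when $\eta(Q)=1$) an extra $p-1$ coming from the truncation level $\mathcal{E}_Q^{\leq -p}$. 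In particular, each $\bar y_j$ must extend to a rational function on $X$ with pole divisor bounded by $D_j=\sum_Q d_{Q,j}[Q]$, an effective divisor supported on $W$.

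For part (2), this identifies $\ker(pr\colon\widehat{V}\to\mathcal{R}^{trun})\bmod\mathfrak{m}$ with $\bigoplus_j H^0(X,\mathcal{O}_X(D_j))$. Enlarging $q$ as permitted in \S\ref{section: convensions} to ensure $\deg D_j>2g-2$, Riemann--Roch gives $\dim H^0(X,\mathcal{O}(D_j))=\deg D_j+1-g$. The total degree $\sum_j\deg D_j$ is computed using \eqref{equation: sum of nonexponent divisor 2}, the identity $\sum_j\sum_Q n_{Q,j}=a\Omega_\rho$, together with the Riemann--Hurwitz identity \eqref{riemann-hurwitz eq} rearranged as $r_1(p-2)=2g-2+r_0+r_\infty$. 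These combine so that the sum collapses to exactly $a(g-1+r_0+r_1+r_\infty-\Omega_\rho)$. To lift the count from $\mathbb{F}_q$ to $L$: the kernel consists of elements whose reductions are rational with bounded poles, and such sections extend canonically to $\mathbb{X}$ since $\mathbb{X}$ is smooth proper over $\mathcal{O}_L$. The equality of kernels on $V$ and $\widehat{V}$ is then automatic, as rational functions on $X$ with poles only in $W$ lie in $\mathcal{B}^\dagger\subset\mathcal{B}(0,\mathbf{r}]$ for any sufficiently small $\mathbf{r}$.

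For part (1), I would establish the surjection $\widehat{V}_0\twoheadrightarrow\mathcal{O}_{\mathcal{R}^{trun}}$ by combining a mod-$\mathfrak{m}$ Mittag--Leffler argument with an $\mathfrak{m}$-adic lifting. Given $z\in\mathcal{O}_{\mathcal{R}^{trun}}$, applying $(\bar{bM})^{-1}$ locally at each $Q$ yields a prescribed polar datum. Using the Cartesian diagram \eqref{overconvergent expansion diagram} and the fact that $\bar{\mathcal{B}}$ contains rational functions on $X$ with arbitrary pole order at points of $W$, one realizes any prescribed finite polar part by taking a global section of $\mathcal{O}_X(D)$ for $D$ large enough that $H^1(X,\mathcal{O}(D))=0$; then $p$-adic summation, using completeness of $\widehat{B}$, builds up the infinite $u_Q^{-n}$-tail as $n\to\infty$. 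Once mod-$\mathfrak{m}$ surjectivity is established, a standard Nakayama/$\mathfrak{m}$-adic successive approximation argument promotes it to $\widehat{V}_0\twoheadrightarrow\mathcal{O}_{\mathcal{R}^{trun}}$ integrally.

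The main obstacle I expect is the precise bookkeeping in part (2): verifying that the $n_{Q,j}$ determined by $v_{u_Q}(\bar m_{Q,j})=n_{Q,j}$, once combined with the two distinct truncation levels and with \eqref{equation: sum of nonexponent divisor 2}, produces exactly the formula $a(g-1+r_0+r_1+r_\infty-\Omega_\rho)$, rather than being off by a multiple of $\Omega_\rho$ or of the ramification locus. The delicate point in part (1) is to confirm that the $H^1(X,\mathcal{O}_X)$-obstruction to prescribing full polar datum really can be dissolved in our truncated and $bM$-twisted setting; this should follow from the flexibility in choosing the ``holomorphic part'' of the rational function realizing the polar data, but needs care to mesh the algebraic Mittag--Leffler with the $p$-adic analytic framework of $\widehat{B}$.
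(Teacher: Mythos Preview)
Your approach is essentially the same as the paper's: reduce modulo $\mathfrak{m}$ using \eqref{local change of frob is eq 1 mod p}, identify the reduced kernel with $\bigoplus_j H^0(X,\mathcal{O}_X(D_j))$, compute via Riemann--Roch together with \eqref{riemann-hurwitz eq} and \eqref{equation: sum of nonexponent divisor 2}, and lift. Two minor points are worth noting. First, enlarging $q$ to force $\deg D_j>2g-2$ is unnecessary: the bound $\sum_Q n_{Q,j}\le r_0+r_\infty$ from \eqref{equation: sum of nonexponent divisor} already gives $\deg D_j\ge r_1(p-1)-r_0-r_\infty=2g-2+r_1>2g-2$ directly. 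Second, the paper packages the lift from $\mathbb{F}_q$ to $L$ via a single Banach-space lemma (Lemma~\ref{computing kernel dimensions by reducing mod pi}): if $\overline{f}$ is surjective then $f(R_0)=S_0$ and $\overline{\ker(f)}=\ker(\overline{f})$. This handles parts (1) and (2) simultaneously and avoids the geometric step of extending sections to $\mathbb{X}$, which is awkward here since the twist $bM$ is not algebraic (the $b_Q$ are genuine local analytic units, not rational functions), so elements of $\widehat{V}$ are not themselves rational on $\mathbb{X}$. Your Mittag--Leffler argument for surjectivity mod $\mathfrak{m}$ is exactly what the paper leaves implicit, and the lemma then promotes it integrally without further work.
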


	\noindent To prove Proposition \ref{local to global: kernel and cokernel}
	we need the following lemma:
	
	\begin{lemma} \label{computing kernel dimensions by reducing mod pi}
		Let $f: R \to S$ be a continuous map of Banach spaces
		such that $f(R_0) \subset S_0$. If $\overline{f}: \overline{R} \to \overline{S}$ is surjective, 
			then $f$ is surjective and $f(R_0)=S_0$. Furthermore,
			\begin{align*}
			\overline{\ker(f)} &= \ker(\overline{f}).
			\end{align*}
	\end{lemma}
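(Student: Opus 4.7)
The plan is a standard Nakayama/successive-approximation argument using the fact that $\mathcal{O}_L$ is a complete DVR with uniformizer $\pi_\circ$ (so $\gothm = (\pi_\circ)$), that $R$ is complete, and that the unit ball $R_0$ is closed in $R$ since the norm takes values in $|L|_p$.

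First I would prove the stronger statement $f(R_0) = S_0$, which immediately implies surjectivity of $f$ by scaling. Given $s \in S_0$, surjectivity of $\overline{f}$ produces $r_0 \in R_0$ with $\overline{f(r_0)} = \overline{s}$, hence $s - f(r_0) \in \gothm S_0 = \pi_\circ S_0$. Write $s - f(r_0) = \pi_\circ s_1$ with $s_1 \in S_0$ and iterate: at each step choose $r_n \in R_0$ lifting a preimage of the reduction of $s_n$ under $\overline{f}$, and set $s_{n+1} = \pi_\circ^{-1}(s_n - f(r_n)) \in S_0$. The partial sums $r_0 + \pi_\circ r_1 + \cdots + \pi_\circ^n r_n$ form a Cauchy sequence with limit $r \in R_0$ (using completeness of $R$ and closedness of $R_0$), and continuity of $f$ together with the telescoping identity $s - f\left(\sum_{k=0}^{n} \pi_\circ^k r_k\right) = \pi_\circ^{n+1} s_{n+1}$ yields $f(r) = s$.

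For the kernel statement, the inclusion $\overline{\ker(f)} \subseteq \ker(\overline{f})$ is immediate from $f(R_0) \subseteq S_0$. For the reverse inclusion, take $\overline{x} \in \ker(\overline{f})$ and any lift $x_0 \in R_0$ of $\overline{x}$. Then $f(x_0) \in \gothm S_0 = \pi_\circ S_0$, so $f(x_0) = \pi_\circ s_1$ with $s_1 \in S_0$. By the first part there exists $r_1 \in R_0$ with $f(r_1) = s_1$, and then $y := x_0 - \pi_\circ r_1 \in R_0$ satisfies $f(y) = 0$ and $\overline{y} = \overline{x_0} = \overline{x}$, giving an element of $\ker(f) \cap R_0$ whose reduction is $\overline{x}$.

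There is no real obstacle here beyond bookkeeping; the only subtle point is making sure the convergence argument in step one is justified, which relies on the assumption that the norm on $R$ takes values in $|L|_p$ (so $R_0$ is the closed unit ball and thus complete) and that $f$ is continuous (so it commutes with the limit). Both hypotheses are built into the setup of Section~\ref{subsection: normed vector spaces and Banach spaces}.
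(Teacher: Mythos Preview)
Your argument is correct and is precisely the successive-approximation scheme the paper alludes to when it says the result ``is proven by approximating the image and kernel of $f$'' (with details deferred to \cite[Lemma~7.3]{kramermiller-padic}). The only point worth making explicit is that $\overline{\ker(f)}\to\overline{R}$ is injective (since $\pi_\circ$ acts injectively on $S$, any $x\in\ker(f)\cap\pi_\circ R_0$ already lies in $\pi_\circ(\ker(f)\cap R_0)$), so your construction of $y$ really identifies $\overline{\ker(f)}$ with $\ker(\overline{f})$ inside $\overline{R}$.
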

	\begin{proof}
		This is proven by approximating the image and kernel of $f$.
		For more details see \cite[Lemma 7.3]{kramermiller-padic}. 
	\end{proof}

	\begin{proof} (of Proposition \ref{local to global: kernel and cokernel})
		Let us first consider $\widehat{V}$.  Define a function $\mu: W \to \mathbb{N}$
		by
		\begin{align}\label{eq: def of mu}
		\mu(Q) &= \begin{cases}
		1 & \eta(Q)\in \{0,\infty\} \\
		p & \eta(Q) = 1
		\end{cases}.
		\end{align}
		Let $\overline{M}$
		 be the reduction of $M$
		modulo $\gothm$. By Lemma \ref{computing kernel dimensions by reducing mod pi} 
		and \eqref{local change of frob is eq 1 mod p} we may prove 
		the corresponding result for the map
		\begin{align} \label{projection map mod p equation}
			\overline{pr}: \overline{M}\Big(\bigoplus_{j=0}^{a-1} \overline{B}\Big) \to
			\overline{\mathcal{R}^{trun}}=\bigoplus_{Q \in W} \bigoplus_{j=0}^{a-1}  u_{Q,j}^{-\mu(Q)}\mathbb{F}_q\llbracket u_{Q,j}^{-1}\rrbracket.
		\end{align} 
		Define the divisor
		\begin{align*}
		D_j &= \sum_{i=1}^{r_1} (p-1)[P_{1,i}] - \sum_{Q \in W}n_{Q,j}[Q].
		\end{align*}  
		By \eqref{local change of frob is eq 1 mod p} we know
		the kernel of \eqref{projection map mod p equation} is 
		\[ \bigoplus_{j=0}^{a-1} H^0(X, \mathcal{O}_X(D_j)).\]
		Since $(p-1)r_1=\deg(\eta)$ we know from \eqref{equation: sum of nonexponent divisor} that 
		$\deg(D_i)\geq \deg(\eta)-r_0-r_\infty$.
		By \eqref{riemann-hurwitz eq} and the Riemann-Roch theorem, we see
		$H^0(X, \mathcal{O}_{X}(D_j))$ has dimension $g-1+r_0+r_1+r_\infty - \sum n_{Q,j}$.
		Then from \eqref{equation: sum of nonexponent divisor 2} we know
		the kernel of \eqref{projection map mod p equation} has dimension
		$a(g-1+r_0+r_1+r_\infty-\Omega_\rho)$ as an $\mathbb{F}_q$-vector space. 
		To prove the result for $V$, first note
		that 
		\begin{align*}
			\ker(pr:\mathcal{R} \to \mathcal{R}^{trun}) \subset \bigoplus_{Q \in W} \bigoplus_{j=0}^{a-1} \mathcal{E}_Q(0,r_Q],
		\end{align*}
		as the kernel consists of functions with finite order poles. The proposition follows from \eqref{eq: growth is the same before and after the twist}.
	\end{proof}

	\paragraph{Choosing a basis.}
	For the remainder of this section, we let $v=U_p \circ C$, which we view as an operator on $V$. 
	Then define $J \subset \mathbb{N} \times W\times \{0,\dots, a-1\}$ by
	\begin{align} \label{J definition}
	J &= \Big \{(n,Q,j) ~~ | ~~ n\geq \mu(Q), ~j \in \{0,\dots,a-1\} \Big \},
	\end{align}
	where $\mu$ is the function defined in \eqref{eq: def of mu}.
	The set $\{u_{Q,j}^{-n}\}_{(n,Q,j) \in J}$ is an orthonormal basis 
	for $\mathcal{R}^{trun}$ over $L$ (recall that $u_{Q,j}$ is the element of $\mathcal{R}$
	with $u_Q$ in the $(Q,j)$-coordinate and zeros in the other coordinates). Let $K$ be a set with $\dim_L(\ker_L(pr|_V))$ elements and set
	$I=J \sqcup K$. 
	For $i=(n,Q,j) \in J$, choose an element $e_i \in V_0$ 
	with $pr(e_i)=u_{Q,j}^{-n}$. By the first part of Lemma \ref{local to global: kernel and cokernel}
	we know that such an $e_i$ exists.
	We also choose an orthonormal basis $\{e_i\}_{i \in K}\subset V_0$ of $\ker_L(pr|_V)$
	indexed by $K$. Then $G=\{e_i\}_{i \in I}$ is an orthonormal basis of $\widehat{V}$ over $L$. 
	By \eqref{equation: projecting onto tails ends up in W} there exists $c_i \in \mathcal{O}_{\mathcal{R}}^{con}$
	for each $i \in I$ with
	\begin{align} \label{eq how the basis looks}
	e_i =\begin{cases}
	u_{Q,j}^{-n} + c_{i} & i=(n,Q,j) \in J \\
	c_i & i \in K.
	\end{cases} 
	\end{align}
	
	Define the space
	\begin{align*}
		V^{con}&= (\mathcal{O}_{\mathcal{R}}^{con} \cap \widehat{V})\otimes_{\Z_p} \Q_p.
	\end{align*}
	Endow $V^{con}$ with a norm so that $V^{con}_0=\mathcal{O}_{\mathcal{R}}^{con} \cap \widehat{V}$ (recall that
	the subscript $0$ denotes the subset of elements of norm $\leq 1$).
	We now scale each element $e_i \in G$ by an element $x_i$ in $\mathcal{O}_L$ to obtain a formal basis $G^{con}$ of $V^{con}$. 
	We break up the definition of $x_i$ into four cases: the
	first case is when $i \in K$ and the other three cases correspond to
	the three types of points $Q\in W$ described in \S \ref{subsubsection: the local frob and Up}. We define
	\begin{align} \label{X definition}
	x_i &= \begin{cases}
	1 & i \in K \\
	\pi_{a \mathfrak{s}_Q}^{q(\mathbf{e}_Q,j)} \pi_{\mathfrak{s}_Q}^{pn} & i=(n,Q,j),~ \eta(Q) \in\{0,\infty\}  \text{ and $\rho^{wild}_Q$ is unramified} \\
	\pi_{a s_Q}^{q(\mathbf{e}_Q,j)} \pi_{s_Q}^{pn} & i=(n,Q,j),~ \eta(Q) \in\{0,\infty\}  \text{ and $\rho_Q^{wild}$ is ramified} \\
	p^{b(n)} & i=(n,Q,j) \text{ and } \eta(Q)=1,
	\end{cases}
	\end{align}
	where $b(n)$ is the function defined in \S \ref{subsection: type 2 frobenius}.
	From the definition of $\mathcal{O}_{\mathcal{R}}^{con}$ we see that
	$G^{con}=\{x_ie_i\}$ is a formal basis of $V^{con}$.
	Indeed, we just selected the $x_i$ appropriately for each summand in the definition
	of $\mathcal{O}_{\mathcal{R}}^{con}$.
	
	\begin{proposition}\label{proposition: compute with Bcon}
		We have
		\begin{align*}
		\det_E(1-sU_p\circ C|V) &= \det_E(1-sU_p\circ C| G^{con}_E).
		\end{align*}
	\end{proposition}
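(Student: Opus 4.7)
The plan is to identify both Fredholm determinants as the same formal power series in $s$ by comparing the matrices of $u := U_p \circ C$ in the two bases $G_E$ and $G^{con}_E$ and observing a cancellation: every permutation $\sigma$ in the defining sum \eqref{Fredholm definition} contributes the same quantity in either basis, because the rescaling factors $x_i$ cancel in cycles.

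First, I would reduce the left-hand side to a Fredholm determinant over the basis $G_E$. Since $bM$ is invertible with entries in $\bigoplus_{Q} \mathcal{E}_Q(0,r_Q]$ (after shrinking $\mathbf{r}$) and $u^a = U_q\circ C^{\nu^{a-1}+\dots+1}$ is conjugate via the twist $bM$ to the completely continuous operator $U_q\circ \alpha N$ on $\bigoplus_j \mathcal{B}(0,\mathbf{r}]$ by \eqref{local change of Frobenius eq with a-root}, the operator $u$ is completely continuous on the Banach space $V$. Lemma \ref{lemma: completely continuous does not depend on basis} then gives
\[ \det_E(1 - s u \mid V) = \det_E(1 - s u \mid G_E). \]

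Next, I would compare the matrices in the two bases. Each $x_i$ lies in $\mathcal{O}_E \subset L^{\nu=1}$, so the $\nu^{-1}$-semilinearity of $U_p$ gives $u(x_i \zeta_k e_i) = x_i\, u(\zeta_k e_i)$. Writing $u(\zeta_k e_i) = \sum_{(j,l)} N_{(j,l),(i,k)}\, \zeta_l e_j$, this yields
\[ u(x_i \zeta_k e_i) = \sum_{(j,l)} \frac{x_i}{x_j}\, N_{(j,l),(i,k)}\,(x_j \zeta_l e_j), \]
so the matrix entry of $u$ in $G^{con}_E$ is $M_{(j,l),(i,k)} = \tfrac{x_i}{x_j} N_{(j,l),(i,k)}$. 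Substituting into the formula \eqref{Fredholm definition} for the coefficient $c_n$ of $s^n$, each permutation $\sigma$ of a finite subset $S \subset I_E$ picks up a factor
\[ \prod_{(i,k)\in S} \frac{x_{\sigma_1(i,k)}}{x_i}, \]
where $\sigma_1$ denotes the first-coordinate projection. Since $\sigma$ permutes $S$, the multi-set $\{\sigma_1(i,k):(i,k)\in S\}$ agrees with the multi-set $\{i:(i,k)\in S\}$, so this product collapses to $1$. Hence the coefficients of $s^n$ in the two Fredholm determinants coincide.

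The only genuine subtlety is establishing well-definedness of $\det_E(1 - s u \mid G^{con}_E)$, i.e., that each $c_n$ converges $p$-adically in the first place. This follows from the column estimates \eqref{UP computation type 1}, \eqref{UP computation type 2} and \eqref{UP computation type 3}: together with \eqref{equation: W is preserved} and the choice of scaling factors $x_i$ built precisely to absorb the valuation growth in each summand of $\mathcal{O}_{\mathcal{R}}^{con}$, they force $\mathbf{col}_{(i,k)}(u, G^{con}_E) \to \infty$ as $i\to\infty$. Once this is in hand, Lemma \ref{lemma: estimating NP by estimating columns} ensures $\det_E(1-su\mid G^{con}_E)$ is defined, and the cancellation argument above gives the equality coefficient by coefficient.
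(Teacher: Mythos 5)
Your cancellation idea (the $x_i$'s cancel in permutation cycles because $\nu$ fixes $\mathcal{O}_L$, so diagonal rescaling of a basis leaves the Fredholm coefficients unchanged) is exactly the heart of the paper's argument, and your treatment of well-definedness via the column estimates is correct. However, there is a genuine gap at the very first step: you invoke Lemma \ref{lemma: completely continuous does not depend on basis} to conclude $\det_E(1-su\mid V)=\det_E(1-su\mid G_E)$, but $G=\{e_i\}_{i\in I}$ was constructed as an orthonormal basis of $\widehat{V}$, not of the Banach space $V=bM(\bigoplus_j\mathcal{B}(0,\mathbf{r}])$. The two norms are different: an element $u_{Q,j}^{-n}$ has unit norm in $\widehat{V}$ but a strictly larger norm in $\mathcal{E}_Q(0,r_Q]$, so $G$ is neither orthonormal nor even a formal basis of $V$, and Lemma \ref{lemma: completely continuous does not depend on basis} does not apply to the pair $(V,G)$.

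The paper closes this gap by interposing a third basis: it chooses $b_{Q,n}\in\mathcal{O}_L$ so that $\{\dots,u_Q,1,b_{Q,1}u_Q^{-1},b_{Q,2}u_Q^{-2},\dots\}$ is an orthonormal basis of $\mathcal{E}_Q(0,r_Q]$, sets $y_{(n,Q,j)}=b_{Q,n}$ (and $y_i=1$ on $K$), and observes $G^{\mathbf{r}}=\{y_ie_i\}$ is an orthonormal basis of $V$. Only then does it appeal to Lemma \ref{lemma: completely continuous does not depend on basis}, and finally uses the similarity-by-diagonal-scaling argument to pass from $G^{\mathbf{r}}_E$ to $G^{con}_E$. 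Your cancellation computation is exactly this last step; you just need to run it with $G^{\mathbf{r}}$ (or any orthonormal basis of $V$ related to $G^{con}$ by a diagonal $\nu$-fixed scaling) in place of $G$, after first noting that such an orthonormal basis of $V$ exists and is obtained from the $e_i$ by rescaling.
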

	\begin{proof}
		For $Q \in W$, define a sequence $b_{Q,1},b_{Q,2}, \dots \in \mathcal{O}_L$ such that
		$\{\dots, u_Q^2,u_Q^1,1,b_{Q,1}u_Q^{-1}, b_{Q,2}u_Q^{-2}, \dots\}$ is a formal basis of
		$\mathcal{E}_Q(0,r_Q]$. 
		For $i\in K$ set $y_i=1$ and for $i=(n,Q,j)$ set $y_i=b_{Q,n}$. Then $G^{\mathbf{r}}=\{y_ie_i\}$
		is an orthonormal basis of $V$. In particular, we have 
		\begin{align*}
			\det_E(1-sU_p\circ C| V)&=\det_E(1-sU_p\circ C| G_E^{\mathbf{r}}) \\
			&= \det_E(1-sU_p\circ C| G^{con}_E).
		\end{align*}
		The second equality follows by observing that the matrices of $U_p\circ C$ 
		for the bases $G_E^{\mathbf{r}}$ and 
		$G^{con}_E$ are similar.
	\end{proof}

	\paragraph{Estimating the column vectors.}
	To estimate the column vectors we will need the following lemma.
	\begin{lemma}\label{lemma: divisibility in Rconv}
		For any $n\geq 0$ we have $\pi_\circ^n \mathcal{O}_{\mathcal{R}}^{con} \cap \widehat{V} = \pi_\circ^nV^{con}_0$.
	\end{lemma}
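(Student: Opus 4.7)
The plan is a short verification that reduces entirely to the fact that $\widehat{V}$ is an $L$-vector subspace of $\mathcal{R}$. The inclusion $\pi_\circ^n V^{con}_0 \subseteq \pi_\circ^n \mathcal{O}_{\mathcal{R}}^{con} \cap \widehat{V}$ is immediate from the defining identity $V^{con}_0 = \mathcal{O}_{\mathcal{R}}^{con} \cap \widehat{V}$: if $y \in \mathcal{O}_{\mathcal{R}}^{con} \cap \widehat{V}$, then $\pi_\circ^n y$ clearly lies in $\pi_\circ^n \mathcal{O}_{\mathcal{R}}^{con}$, and it lies in $\widehat{V}$ because $\widehat{V}$ is closed under scaling by $\mathcal{O}_L$.

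For the reverse inclusion I would first isolate the key observation: $\widehat{V} = bM\bigl(\bigoplus_{j=0}^{a-1}\widehat{\mathcal{B}}\bigr)$ is an $L$-vector subspace of $\mathcal{R}$, since it is the image of the $L$-algebra $\bigoplus_{j=0}^{a-1}\widehat{\mathcal{B}}$ under the invertible $L$-linear map $bM$ (the entries of $b$ and $M$ lie in the characteristic-zero fields $\mathcal{E}_Q^\dagger$). Granted this, take any $x \in \pi_\circ^n \mathcal{O}_{\mathcal{R}}^{con} \cap \widehat{V}$ and write $x = \pi_\circ^n z$ with $z \in \mathcal{O}_{\mathcal{R}}^{con}$. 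Since $\pi_\circ^{-n} \in L^\times$ and $x \in \widehat{V}$, we get $z = \pi_\circ^{-n} x \in \widehat{V}$. Hence $z \in \mathcal{O}_{\mathcal{R}}^{con} \cap \widehat{V} = V^{con}_0$, and $x = \pi_\circ^n z \in \pi_\circ^n V^{con}_0$, completing the argument.

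I do not anticipate a genuine obstacle. The substance of the lemma is simply that $V^{con}_0$ sits as a saturated $\mathcal{O}_L$-submodule of the $L$-vector space $\widehat{V}$, so $\pi_\circ^n$-divisibility of an element of $\widehat{V}$ detected in the ambient $\mathcal{O}_{\mathcal{R}}^{con}$ automatically promotes to $\pi_\circ^n$-divisibility within $V^{con}_0$. The statement is presumably being recorded here because the subsequent computation of Fredholm determinants needs to compare the $\pi_\circ$-adic filtration on $\mathcal{O}_{\mathcal{R}}^{con}$ with the norm just introduced on $V^{con}$.
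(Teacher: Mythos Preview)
Your proof is correct and follows essentially the same approach as the paper: both argue that $\widehat{V}$ is an $L$-vector space, so dividing an element of $\widehat{V}$ by $\pi_\circ^n$ keeps it in $\widehat{V}$, which immediately gives the nontrivial inclusion. Your write-up is slightly more detailed (you spell out why $\widehat{V}$ is an $L$-subspace and handle the easy inclusion explicitly), but the idea is identical.
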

	\begin{proof}
		Let $z \in \pi_\circ^n \mathcal{O}_{\mathcal{R}}^{con} \cap \widehat{V}$. Then $\pi_\circ^{-n}z \in \mathcal{O}_{\mathcal{R}}^{con}$ and since
		$\widehat{V}$ is a vector space we have $\pi_\circ^{-n}z \in \widehat{V}$. It follows that $z \in \pi_\circ^n(\mathcal{O}_{\mathcal{R}}^{con} \cap \widehat{V})$.
		The other direction is similar.
	\end{proof}
	We now estimate $\mathbf{col}_{(i,1)}(v,G_E^{con})$ for each $i \in I$. 
	We break this up into the four cases used when defining $x_i$.
	\begin{enumerate}[label=(\Roman*)]
		\item For $i \in K$, we have $x_ie_i=e_i$. We know from \eqref{eq how the basis looks} that
		$e_i \in \mathcal{O}_{\mathcal{R}}^{con}$. By \eqref{equation: W is preserved} we know $v(\mathcal{O}_{\mathcal{R}}^{con}) \subset \mathcal{O}_{\mathcal{R}}^{con}$, which means
		$v (e_i) \in V_0^{con}$. Thus, $\mathbf{col}_{(i,1)}(v,G_E^{con})\geq 0$
		and 
		\begin{align*}
			\{\mathbf{col}_{(i,1)}(v,G_E^{con})\}_{i \in K} \succeq \{\underbrace{0,0,\dots,0}_{a(g-1+r_0+r_1+r_\infty-\Omega_\rho)} \}.
		\end{align*}
		The multiplicity of the zeros follows from Lemma \ref{local to global: kernel and cokernel}.
		\label{Column: case 1}
		\item Fix $Q$ with $\eta(Q)=1$ and let $i=(n,Q,j)\in J$. By \eqref{J definition}, we only consider tuples $(n,Q,j)$ with $n\geq p$. 
		Recall from \eqref{X definition} that $x_i=p^{b(n)}$ and from \eqref{eq how the basis looks} that $e_i=u_{Q,j}^{-n} + c_i$ with $c_i \in \mathcal{O}_{\mathcal{R}}^{con}$. 
		Write $n=k+pm$, where $0 \leq k <p$. By \eqref{equation: W is preserved} we have $v(x_ic_i) \in p^{b(n)}\mathcal{O}_{\mathcal{R}}^{con}$ 
		and by \eqref{UP computation type 1} we have $v(x_iu_{Q,j}^{-n}) \subset p^m\mathcal{O}_{\mathcal{R}}^{con}$.
		From the definition of $b(n)$ in \S \ref{subsection: type 2 frobenius}, we know $b(n)\geq m$, which implies
		$v (x_ie_i) \in p^m\mathcal{O}_{\mathcal{R}}^{con}$. Lemma \ref{lemma: divisibility in Rconv} tells us that
		$v (x_ie_i) \in p^m(V_0^{con})$. Thus, we have $\mathbf{col}_{(i,1)}(v,G_E^{con})\geq m$.
		This gives:
		\begin{align*}
			P_Q=\{\mathbf{col}_{((n,Q,j),1)}(v,G_E^{con})\}_{\substack{n\geq p \\ 0\leq j < a}} \succeq  \{1,2,3,\dots \}^{ \times ap}.
		\end{align*}
		
		\label{Column: case 2}
		\item Fix $Q\in W$ such that $\eta(Q)\in\{0,\infty\}$ and $\rho^{wild}_Q$
		is unramified. Consider $i=(n,Q,j)\in J$. By \eqref{J definition} we only consider tuples $(n,Q,j)$ where
		$n\geq 1$. Recall from \eqref{X definition} that $x_i=\pi_{a \mathfrak{s}_Q}^{q(\mathbf{e}_Q,j)} \pi_{\mathfrak{s}_Q}^{pn}$ and from \eqref{eq how the basis looks} that $e_i=u_{Q,j}^{-n} + c_i$ with $c_i \in \mathcal{O}_{\mathcal{R}}^{con}$. 
		Then by \eqref{UP computation type 2} and \eqref{equation: W is preserved}, we see that $v(x_ie_i) \in \pi_{\mathfrak{s}_Q}^{n(p-1)} \pi_{a\mathfrak{s}_Q}^{-\omega_Q} \mathcal{O}_{\mathcal{R}}^{con}$. Again, by Lemma \ref{lemma: divisibility in Rconv} we see that
		$v (x_ie_i) \in \pi_{\mathfrak{s}_Q}^{n(p-1)} \pi_{a\mathfrak{s}_Q}^{-\omega_Q} (V_0^{con})$. This gives:
		\begin{align*}
		P_Q=\{\mathbf{col}_{((n,Q,j),1)}(v,G_E^{con})\}_{\substack{n\geq 1 \\ 0\leq j < a}} \succeq  \Bigg\{\frac{1}{\mathfrak{s}_Q}-\frac{\omega_Q}{a\mathfrak{s}_Q(p-1)},\frac{2}{\mathfrak{s}_Q}-\frac{\omega_Q}{a\mathfrak{s}_Q(p-1)}, \dots \Bigg\}^{\times a}.
		\end{align*}
		
		\label{Column: case 3}
		\item Finally,  fix $Q\in W$ such that $\eta(Q)\in\{0,\infty\}$ and $\rho^{wild}_Q$
		is ramified. Repeating the argument from \ref{Column: case 3} where we replace $\mathfrak{s}_Q$ with $s_Q$ gives:
		\begin{align*}
		P_Q=\{\mathbf{col}_{((n,Q,j),1)}(v,G_E^{con})\}_{\substack{n\geq 1 \\ 0\leq j < a}} \succeq  \Bigg\{\frac{1}{s_Q}-\frac{\omega_Q}{as_Q(p-1)},\frac{2}{s_Q}-\frac{\omega_Q}{as_Q(p-1)}, \dots \Bigg\}^{\times a}.
		\end{align*}
		
		\label{Column: case 4}
	\end{enumerate}
	\noindent 
	We put everything together to get
	\begin{align*}
		\{\mathbf{col}_{(i,1)}(v,G_E^{con})\}_{i \in I} \succeq \{\underbrace{0,0,\dots,0}_{g-1+r_0+r_1+r_\infty-\Omega_\rho} \}^{\times a}
		\bigsqcup  \Bigg (\bigsqcup_{Q \in W} P_Q \Bigg ).
	\end{align*}
	Then by Lemma \ref{lemma: estimating NP by estimating columns} we see that
	$\det(1-sv,G_E^{con}$ converges and that
	\begin{align*}
		NP_p(v|G_E^{con}) &\succeq \{\underbrace{0,0,\dots,0}_{g-1+r_0+r_1+r_\infty-\Omega_\rho} \}^{\times a^2}
		\bigsqcup  \Bigg (\bigsqcup_{Q \in W} P_Q^{\times a} \Bigg ).
	\end{align*}
	Then from Proposition \ref{proposition: compute with Bcon} we have
	\begin{align*}
		\frac{1}{a} NP_p(U_p \circ C| V) &\succeq \{\underbrace{0,0,\dots,0}_{g-1+r_0+r_1+r_\infty-\Omega_\rho} \}^{\times a}
		\bigsqcup  \Bigg (\bigsqcup_{Q \in W} P_Q^{} \Bigg ).
	\end{align*}
	When $Q$ is from case \ref{Column: case 2} each slope in $P_Q$ is at least one.
	Also, when $Q$ is from case \ref{Column: case 3}, we know
	from \eqref{equation: false wild ramification number} that each slope in $P_Q$ is at least one. This gives 
	\begin{align*}
	\frac{1}{a} NP_p(U_p \circ C|V)_{< 1} &\succeq \big 
	\{\underbrace{0,\dots,0}_{g-1+r_0+r_1+r_\infty-\Omega_\rho}
	\big \}^{\times a}
	\bigsqcup \Bigg ( \bigsqcup_{i=1}^{\mathbf{m}} S_{\tau_i}^{\times a} \Bigg ).
	\end{align*}
	Proposition \ref{proposition: NP bounds with operator assumption1}
	follows from \eqref{equation: twisting space}.

		\subsection{Finishing the proof}
		
		We now finish the proof of Theorem \ref{main theorem}. 
		From \eqref{equation: L function estimate comes from U_p}
		and Proposition \ref{proposition: NP bounds with operator assumption1}
		we know
		\begin{align*} 
		NP_q(L(\rho,V,s))_{<1} & \succeq \big 
		\{\underbrace{0,\dots,0}_{g-1+r_0+r_1+r_\infty-\Omega_\rho}
		\big \}
		\bigsqcup \Bigg ( \bigsqcup_{i=1}^{\mathbf{m}} S_{\tau_i} \Bigg ).
		\end{align*}
		Comparing \eqref{introduction of L-function} with \eqref{introduction of L-function:2}
		gives
		\begin{align*} 
		L(\rho,V,s)&= L(\rho,s)\cdot \prod_{\substack{Q \in W  \\ Q \neq \tau_i}} 
		(1-\rho(Frob_Q)s).
		\end{align*}
		This product has $r_0+r_1+r_\infty-{\mathbf{m}}$ terms, each accounting for a 
		slope zero segment. Thus,
		\begin{align*} 
		NP_q(L(\rho,s))_{<1} \succeq  \big 
		\{\underbrace{0,\dots,0}_{g-1+{\mathbf{m}}-\Omega_\rho}
		\big \}
		\bigsqcup \Bigg ( \bigsqcup_{i=1}^{\mathbf{m}} S_{\tau_i} \Bigg ).
		\end{align*}	
		From the Euler-Poincare formula (see e.g. \cite{Raynaud-euler_poincare}) we know $L(\rho,s)$ has degree 
		$2(g-1+{\mathbf{m}}) + \sum (s_{\tau_i} - 1)$. This accounts for the remaining slope one 
		segments. The proof is complete.

		\printnoidxglossary[type=symbols,style=long,title={List of Symbols}]
		
	\bibliographystyle{plain}
	\bibliography{bibliography.bib}

\end{document}